\newtheorem{thm}{Theorem}[section]  
\newtheorem{cor}[thm]{Corollary}
\newtheorem{lem}[thm]{Lemma}
\newtheorem{prop}[thm]{Proposition}
\newtheorem{prob}[thm]{Problem}
\newtheorem{conj}[thm]{Conjecture}
\theoremstyle{definition}
\newtheorem{rem}[thm]{Remark}
\newcommand{\ve}{\varepsilon}
\newcommand{\Ex}{\mathbb{E}}
\def\Pr{\mathbb{P}}
\newcommand{\er}{\mathbb{R}}
\newcommand{\zet}{\mathbb{Z}}
\newcommand{\en}{\mathbb{N}}
\newcommand{\Med}{\mathrm{Med}}
\newcommand{\dist}{\mathrm{dist}}
\newcommand{\cala}{\mathcal{A}}
\newcommand{\Laa}{L_1}
\newcommand{\La}{L_2}
\newcommand{\Lb}{L_3}
\newcommand{\Lc}{L_4}
\newcommand{\Ld}{L_5}
\newcommand{\Le}{L_6}
\newcommand{\Lh}{L_7}
\newcommand{\Lf}{L_8}
\newcommand{\Lg}{L_9}
\newcommand{\Li}{L_{10}}
\newcommand{\Lj}{L_{11}}
\newcommand{\Lk}{L_{12}}
\begin{document}

\title{\bf On the boundedness of Bernoulli processes\thanks{Research supported by the NCN grant DEC-2012/05/B/ST1/00412}}
\author{Witold Bednorz and Rafa{\l} Lata{\l}a}
\date{}
\maketitle

\begin{abstract}
We present a positive solution to  the so-called Bernoulli Conjecture concerning the characterization
of  sample boundedness of Bernoulli processes. We also discuss some applications and related open problems.
\end{abstract}

\section{Introduction and Notation}

One of the fundamental issues of probability theory is the investigation of suprema of stochastic processes. Besides
various practical motivations it is closely related to such important theoretical problems as 
boundedness and continuity of sample paths of stochastic processes, convergence of orthogonal series, 
random series and stochastic integrals, 
estimates of norms of random vectors and random matrices, limit theorems for random vectors and empirical processes,
combinatorial matching theorems and many others.

In particular in many situations one needs to find lower and upper bounds for the quantity $\Ex\sup_{t\in T}X_t$, where 
$(X_t)_{t\in T}$ is a stochastic process. For a large class of processes (including Gaussian and Bernoulli processes)
finiteness of this quantity is equivalent to the sample boundedness, i.e. to the condition $\Pr(\sup_{t\in T}X_t<\infty)=1$. 
To avoid measurability problems one may either assume that $T$ is 
countable or define  $\Ex\sup_{t\in T}X_t:=\sup_F\Ex\sup_{t\in F}X_t$, where the supremum is taken over all finite sets 
$F\subset T$. 
The modern approach to this problem is based on chaining techniques, already present in the work of Kolmogorov and 
successfully developed over the last 40 years (see the monographs \cite{Tab1} and \cite{Tab2}). 

The most important case of centered Gaussian processes $(G_t)_{t\in T}$ is well understood. 
In this case the boundedness of 
the process is related to the geometry of the metric space $(T,d)$, where $d(t,s):=(\Ex(G_t-G_s)^2)^{1/2}$. In the landmark
paper \cite{Du}, R.~Dudley obtained an upper bound for  $g(T):=\Ex\sup_{t\in T}G_t$ in terms of entropy numbers. Dudley's
bound may be reversed for stationary processes \cite{Fe}, but not in general. In 1974
X.~Fernique \cite{Fe}  showed that for any probability measure $\mu$ on the metric space $(T,d)$,
\[
g(T)\leq L\sup_{t\in T}\int_0^{\infty}\log^{1/2}\Big(\frac{1}{\mu(B(t,x))}\Big)dx,
\]
where $L$ here and in the sequel denotes an universal constant and $B(t,x)$ is the ball in $T$ centered at $t$ with radius $x$. 
This can easily be shown to improve Dudley's estimate. 
In the seminal paper \cite{Ta_reg} M.~Talagrand showed that Fernique's
bound may be reversed, i.e. for any centered Gaussian process $G_t$ there exists a probability measure $\mu$ (called
a majorizing measure) on $T$ such that
\[
\sup_{t\in T}\int_0^{\infty}\log^{1/2}\Big(\frac{1}{\mu(B(t,x))}\Big)dx\leq Lg(T).
\] 

In general finding a majorizing measure in a concrete situation is a highly nontrivial task. In \cite{Ta_nomaj} Talagrand 
proposed a more combinatorial approach to this problem and showed that constructing a majorizing measure is equivalent 
to finding a suitable sequence of \emph{admissible} partitions of the set $T$. An increasing sequence 
$({\cal A}_n)_{n\geq 0}$ of partitions of the set $T$ is called admissible if ${\cal A}_0=\{T\}$ and 
$|{\cal A}_n|\leq N_n:=2^{2^n}$. The Fernique-Talagrand estimate may then be expressed as
\begin{equation}
\label{eq:FerTal}
\frac{1}{L}\gamma_2(T,d)\leq g(T)\leq L\gamma_2(T,d)
\end{equation}
where
\[
\gamma_{2}(T,d):=\inf\sup_{t\in T}\sum_{n=0}^{\infty}2^{n/2}\Delta(A_n(t)),
\]
and where the infimum runs over all admissible sequences  of partitions. Here  $A_n(t)$ is the unique set in ${\cal A}_n$ which
contains $t$ and $\Delta(A)$ denotes the diameter of the set $A$.

Any separable Gaussian process has a canonical Karhunen-Lo\`eve type representation $(\sum_{i=1}^{\infty}t_ig_i)_{t\in T}$,
where $g_1,g_2,\ldots$ are i.i.d. standard normal Gaussian ${\cal N}(0,1)$ r.v's  and $T$ is a subset of $\ell^2$. 
Another fundamental class of processes is obtained when in such a sum one replaces  the Gaussian r.v's $(g_i)$ by 
independent random signs. We detail this now. 

Let $I$ be a countable set and $(\ve_i)_{i\in I}$ be a Bernoulli sequence i.e.\ a sequence of
i.i.d.\ symmetric r.v's taking values $\pm 1$. For $t\in \ell^2(I)$ the series $X_t:=\sum_{i\in I}t_i\ve_i$ converges
a.s. and for $T\subset \ell^2(I)$ we may define a \emph{Bernoulli process} $(X_t)_{t\in T}$ and try to estimate
$b(T):=\Ex\sup_{t\in T}X_t$. There are two easy ways to bound $b(T)$. The first is a consequence of the uniform bound 
$|X_t|\leq \|t\|_1=\sum_{i\in I}|t_i|$, so that $b(T)\leq\sup_{t\in T}\|t\|_1$. Another is based on the domination
by the canonical Gaussian process $G_t:=\sum_{i\in I}t_ig_i$. Indeed, assuming independence
of $(g_i)$ and $(\ve_i)$,  Jensen's inequality implies
\begin{equation}
\label{eq:gaussdom}
g(T)=\Ex\sup_{t\in T}\sum_{i\in I}t_ig_i=\Ex\sum_{i\in I}t_i\ve_i|g_i|\geq \Ex\sum_{i\in I}t_i\ve_i\Ex|g_i|=
\sqrt{\frac{2}{\pi}}b(T).
\end{equation} 
Obviously also if $T\subset T_1+T_2=\{t^1+t^2\colon\ t^l\in T_l\}$ then $b(T)\leq b(T_1)+b(T_2)$, hence
\begin{align*}
b(T)
&\leq\inf\Big\{\sup_{t\in T_1}\|t\|_1+\sqrt{\frac{\pi}{2}}g(T_2)\colon\ T\subset T_1+T_2\Big\}
\\
&\leq \inf\Big\{\sup_{t\in T_1}\|t\|_1+L\gamma_2(T_2)\colon\ T\subset T_1+T_2\Big\},
\end{align*} 
where $\gamma_2(T)=\gamma_2(T,d_2)$ and $d_2$ is the $\ell^2$-distance. 
It was open for about 25 years (under the name of Bernoulli conjecture) whether the above estimate may be reversed 
(see e.g. Problem 12 in \cite{LT} or Chapter 4 in \cite{Tab1}). 
Our main result, announced in \cite{BL}, provides an affirmative answer.

\begin{thm}
\label{th:BC}
For any set $T\subset \ell^2(I)$ with $b(T)<\infty$ we may find a decomposition 
$T\subset T_{1}+T_{2}$
with $\sup_{t\in T_{1}}\sum_{i\in I}|t_{i}|\leq Lb(T)$ and $g(T_2)\leq Lb(T)$.
\end{thm}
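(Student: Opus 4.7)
The plan is to adapt Talagrand's generic chaining / admissible partitions machinery that handles the Gaussian case, but with a two-tier splitting at every scale to accommodate the fact that Bernoulli increments cannot be controlled by the $\ell^2$-diameter alone. The aim is to construct, for a fixed base point $t_0\in T$, an admissible sequence $(\mathcal{A}_n)_{n\geq 0}$ of partitions of $T$ together with, for each $A\in\mathcal{A}_n$, a chosen center $\pi_n(A)\in T$ and an index set $I_n(A)\subset I$ (the ``large-coordinate set'' at level $n$) so that, writing $h_n(t):=\pi_{n+1}(A_{n+1}(t))-\pi_n(A_n(t))$,
\[
\sup_{t\in T}\sum_{n\geq 0}2^{n/2}\big\|h_n(t)1_{I_n(A_n(t))^c}\big\|_2\leq Lb(T)
\]
and
\[
\sup_{t\in T}\sum_{n\geq 0}\big\|h_n(t)1_{I_n(A_n(t))}\big\|_1\leq Lb(T).
\]
Given such partitions, one sets $T_1:=\big\{t_0+\sum_{n\geq 0}h_n(t)1_{I_n(A_n(t))}:\ t\in T\big\}$ and $T_2:=\big\{\sum_{n\geq 0}h_n(t)1_{I_n(A_n(t))^c}:\ t\in T\big\}$. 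Since $\pi_n(A_n(t))\to t$, the telescoping identity yields $T\subset T_1+T_2$; the $\ell^1$-bound on $T_1$ follows from the second display (with $\|t_0\|_1$ absorbed by an initial truncation), while $g(T_2)\leq L\gamma_2(T_2,d_2)\leq Lb(T)$ follows from the first via \eqref{eq:FerTal}.

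Constructing such partitions is the heart of the argument. I would introduce a functional $F$ on subsets of $\ell^2(I)$, comparable to $b$ and monotone under inclusion, satisfying a Talagrand-style growth condition of the following flavor: whenever $A$ contains $N_{n+1}$ points that are well-separated in one of two senses --- either $\ell^2$-separated on a ``small-coordinate'' part, or $\ell^1$-separated on a small support --- then $F(A)$ must strictly exceed $\max_{A'}F(A')+c2^{n/2}r$ over any partition of $A$ into at most $N_{n+1}$ pieces of the appropriate diameter $r$. The lower bounds on $b$ feeding this growth condition would combine Sudakov-type inequalities for the $\ell^2$-separated, bounded-coordinate regime with the observation that on a small coordinate set $J$ the signs $(\ve_i)_{i\in J}$ realize any prescribed pattern with probability $\geq 2^{-|J|}$, converting $\ell^1$-separations on small supports into genuine lower bounds on $b$ through a selection of witnesses.

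The principal obstacle, and the reason the conjecture has resisted proof for a quarter century, is the adaptive nature of the choice of $I_n(A)$: a fixed-threshold truncation cannot work because the correct large-coordinate set depends both on the local geometry of $A$ and on which coordinates have already been charged at earlier levels. I expect the construction to proceed by a nested greedy selection in which, at each level $n$ and each surviving cell $A$, one extracts a maximal family of well-separated representatives, assigns the coordinates carrying ``$\ell^1$-type'' separation to $I_n(A)$, and refines the partition on the remaining coordinates using the standard $\gamma_2$-scheme. Making this two-tier scheme consistent with the admissibility constraint $|\mathcal{A}_n|\leq N_n$, and simultaneously controlling both budgets without double-counting charges incurred at different levels, requires a delicate inductive argument, and is where the bulk of the technical work must lie.
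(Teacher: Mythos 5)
Your overall scheme --- reduce the decomposition to an admissible sequence of partitions carrying, for each cell $A\in\mathcal{A}_n$, a center $\pi_n(A)$ and a coordinate set $I_n(A)$, then route the ``large-jump'' coordinates of the increments into $T_1$ and the rest into $T_2$ --- has the right shape and is in the spirit of Theorem \ref{th:part}. But the engine that is supposed to produce these partitions is missing, and the two mechanisms you propose for the growth condition do not suffice. First, the Sudakov minoration for Bernoulli processes (Theorem \ref{th:SudBern}) yields only $\frac{1}{\Lb}\min\{a\sqrt{\log m},a^{2}/b\}$ and is useless unless one controls the $\ell^\infty$-norm $b$ of the separated points; your outline contains no mechanism for doing so. This is precisely the role of Talagrand's chopping maps, which replace each $t_i\ve_i$ by a sum $\sum_{p}\varphi_{(p-1)r^{-j},pr^{-j}}(t_i)\ve_{i,p}$ of uniformly bounded terms and force the functionals to carry four parameters $(J,u,k,j)$ that must be tracked through the whole recursion.

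Second, and more fundamentally, converting ``$\ell^1$-separation on small supports'' into lower bounds on $b(T)$ by realizing sign patterns with probability $2^{-|J|}$ cannot carry the argument: the $T_1$ part of the decomposition has no support-size control whatsoever (its $\ell^1$ mass may be spread over arbitrarily many tiny coordinates), and no growth condition based on $\ell^1$-separation is available --- this is essentially the obstruction that kept the conjecture open for twenty-five years. The ingredient that actually overcomes it is Proposition \ref{prop:prop1}: one works with two $\ell^2$-distances $d_J\leq d_I$, corresponding to the process before and after discarding the Bernoulli variables outside $J$, and shows that a set of small $d_J$-diameter is covered by $m$ balls of the \emph{larger} distance $d_I$ together with a remainder on which $b_J$ --- the process with some variables \emph{removed} --- has dropped by about $\sigma\sqrt{\log m}$. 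The gain on the $\ell^1$ side thus comes from deleting Bernoulli variables, not from selecting sign witnesses, and it produces a decomposition lemma (Corollary \ref{cor:maindec}) with three types of pieces rather than the Gaussian two; keeping the resulting recursion consistent requires the counters $p_n$ of Section \ref{sec:constr}. As written, your proposal restates the target and the known difficulty but does not contain the idea that resolves it.
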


Of course part of the difficulty is that the decomposition is neither unique nor canonical. Let us briefly describe 
some crucial ideas behind the proof, which uses a number of tools developed over the years
by Michel Talagrand. First of all we must review the proof of the lower bound of \eqref{eq:FerTal} 
in the modern approach, as in e.g. \cite{Tab1}. Every idea of this proof is used to its fullest in our approach.

As was nicely explained in \cite{Ta_sim} two fundamental facts behind this proof are Gaussian concentration and the 
Sudakov minoration principle. Gaussian concentration asserts that the fluctuations of the supremum of a Gaussian process 
are at worse like those of a single Gaussian r.v.\ with standard deviation about the  diameter of the space $(T,d)$ 
(irrelevant of the average value of this supremum). The Sudakov minoration says that the supremum of $m$ Gaussian 
r.v's with distances at least $a$ of each other is about $a\sqrt{\log m}$. 
These two principles can then be combined to obtain a ``growth condition'' as follows. 
If the space $(T,d)$ contains $m$ pieces $H_l$, which are at mutual distances at least $a$, and if each of these pieces 
is of diameter at most a small fraction of $a$, then the expected value of the supremum of the process over the whole 
index set $T$ is larger by about $a\sqrt{\log m}$ than the minimum over $l$ of the expected value of supremum of the process 
on the set $H_l$. This brings the idea to measure the ``size'' $F(A)$ of a subsets $A$ of $T$ by the expected value of 
the supremum of the process over $A$. One is then led to perform constructions in the abstract metric space $(T,d)$ using 
only the value of the ``functional'' $F(A)$ over the subsets $A$ of $T$. 
(The concept of functionals and related ``growth conditions" was introduced and developed by Talagrand \cite{Ta_AOPgc,Tab1} 
to simplify proofs and give a unified approach to various majorizing measure type results.) 
The basic ingredient to the proof  is then a ``decomposition lemma'', which is a simple consequence of the growth condition
through a ``greedy'' construction.   
Roughly speaking this decomposition lemma asserts that there exists a universal constant $r$ with the property that 
any subset $A$ of $T$ can be partitioned into at most $m$ pieces such that each piece either has the
diameter at most $\Delta(A)/r$, or else it satisfies the condition that its every subset $B$ of diameter at most 
$\Delta(A)/r^2$ satisfies $F(B)\leq F(A)- c\Delta(A) \sqrt{\log{m}}$ for some universal constant $c$. (The reader observes that the condition
on $B$ is {\it not} that its diameter is at most $\Delta(A)/r$ but the much more stringent requirement that its diameter is
at most $\Delta(A)/r^2$. It is exactly this point which makes the proof delicate.) In words, every piece is either small,
or it has the property that the value of the functional on its very small sub-pieces is quite smaller than on the whole of $A$.
The admissible sequence of partitions we look for is then obtained by a recursive use of the decomposition lemma. 
Each set $A$ belonging to ${\cal A}_n$ is partitioned in at most $N_n=2^{2^{n}}$ sets to produce the partition
${\cal A}_{n+1}$. It is not obvious, but true, that the resulting sequence of partitions has the required properties. 
(Proving this is the tricky part of the whole proof.)

When working with Bernoulli processes (and many others) the situation is more complicated than in the Gaussian case and one 
needs to use a family of distances interpolating between the $\ell^2$ and the $\ell^1$ distances.  Such  distances were  
introduced  by Talagrand in \cite{Ta_infdiv}, \cite{Ta_canon}, \cite{TaGAFA} and will be of constant use. 
An important concept in our proof is reducing the decomposition of the set $T$ to constructing a suitable  
admissible sequence of partitions. Theorem \ref{th:part} below is a refinement of previous 
results of Talagrand in the same direction, \cite{Ta_canon,TaGAFA,Tab1}.  In some sense this type of result amounts to 
organize chaining in an efficient way. Indeed in \cite{Tab2} M. Talagrand used such a result to settle the long standing 
problem of convergence of random Fourier series in a very general case. 

How, then, should one construct the required partitions?

M. Talagrand extended to  Bernoulli processes both Gaussian concentration and the Sudakov minoration in \cite{Ta_isop} and
\cite{Ta_infdiv} (see Theorems \ref{th:concBern} and \ref{th:SudBern} below). The Sudakov minoration result provides a 
lower bound on the expected value of the supremum of variables $X_{t^l}$ when the various points $t^l$ are far from each 
other in the $\ell^2$ sense, but it requires a control in the supremum norm 
of the elements $t^l$.  (The overall idea is simply that by the central limit theorem a sum $\sum_{i} \varepsilon_i t_i $ 
looks more like a Gaussian r.v.\ if all the coefficients are small.) In order to apply this minoration to increasingly larger
families, one needs to reduce the supremum norm. To do this  M. Talagrand introduced in \cite{Ta_infdiv} the fundamental idea 
of ``chopping maps''.  
These  replace the process of interest by a process where the control in the supremum norm is better, but which 
is related to the original process through an equally crucial comparison theorem (Theorem \ref{th:contr} below). 
This is essentially done by replacing each term $t_i\ve_i$ by a sum $\sum_{j} \varphi_j(t_i) \ve_{i,j}$ 
for new independent Bernoulli r.v's and certain functions $\varphi_j$, where we control uniformly $\sup |\varphi_{j}(t_i)|$, 
and where $|t_i|=\sum_j |\varphi_j(t_i)|$.  In some sense in this procedure we ``add more Bernoulli r.v's'' to the process. 

On the base of these tools Talagrand was able to prove in \cite{TaGAFA} a weaker form of Bernoulli conjecture with 
$\ell^p$-diameter bound on the set $T_1$,  $p>1$ instead of $\ell^1$-diameter. Although such a bound is not optimal, 
it was sufficient to obtain deep results about Rademacher cotype constants of operators on $C(K)$ spaces.

The main difficulty in using chopping maps optimally is that there are two $\ell^2$-distances involved, the distance associated 
to the process before it is chopped, and the possibly much smaller distance associated to the process after it is chopped.  
This makes it very difficult not to loose information during the construction. For example, if we try to mimic the construction 
in the Gaussian case, and if at a given stage of the construction we have a set $A$ with the property that on every subset of 
very small diameter the process is significantly smaller than on the whole of $A$, it is far from clear what this implies after 
applying  a chopping map since sets of small diameter for the ``smaller distance'' need not be of small diameter for the 
larger distance. Maps other than chopping maps were used in \cite{La}, where the  Bernoulli conjecture was verified for a 
very special class of subsets of $\ell^2$.  Proposition \ref{prop:prop1} below is a 
modification of the key new fact proved in that  paper. It is the cornerstone of  our paper. While Talagrand's chopping maps 
amount somehow to introduce new Bernoulli r.v's, a major new ingredient is that we find convenient at times to \emph{remove} 
some of these variables (which can only decrease the size of the process). In the situation  of Proposition \ref{prop:prop1} 
we consider a subset $J$ of $I$ and the process $X'_t=\sum_{i\in J} t_i\ve_i$; that is, we remove the Bernoulli r.v's 
which are not indexed by $J$. We then have two $\ell^2$-distances on the index set: a small one 
$\sqrt{\sum_{i\in J} (t_i-s_i)^2}$  and a large one $\sqrt{\sum_{i\in I} (t_i-s_i)^2}$. Roughly speaking the content of 
Proposition \ref{prop:prop1} is that if the index set has a small diameter with respect to the \emph{smaller} distance  
we may decompose it into not too many sets 
which either have a small diameter with respect to the \emph{larger} original distance or else have the property that the 
size of the process over the whole piece has decreased significantly when one drops the Bernoulli r.v's which are not 
indexed by $J$. The quantitative version of the result involves of course the ubiquitous term $\sqrt{\log m}$ where $m$ is 
the number of pieces permitted. 

Even after this principle has been clarified, it is still a very non-trivial technical problem to define an appropriate 
family of ``functionals'' to measure the ``size'' of the pieces of our partition.  These functionals at time ``add'' new 
Bernoulli r.v's and at time ``remove'' some. Of course the difficulty  is to find an exact balance between these two 
operations to ensure that no essential information is lost. Our functionals depend on four parameters $J, u, k, j$. 
The parameter $j\in \mathbb Z$ indicates ``how much chopping we have performed''. The other three parameters keep track of 
which Bernoulli r.v's we still use in the functional.  A new feature of this construction is that our functionals 
depend not only on which stage of the construction we are at, but also on which piece we are trying to partition. At each 
step we use a ``decomposition lemma'', which we give in Corollary \ref{cor:maindec}, somewhat similar in spirit to that 
of the Gaussian case.  Another new feature is that this lemma is not obtained only through a growth condition. To prove it 
we also apply in an essential way Proposition \ref{prop:prop1} mentioned above. In contrast with the Gaussian case, 
the decomposition lemma now produces three distinct types of pieces. Two of the types of pieces behave as in the Gaussian case. 
The new type of piece has the property that its size (as measured by the proper functional) has decreased compared to the 
set we partitioned after ignoring a suitable subset of the Bernoulli r.v's.

Our proof also uses in an essential way the technique of ``counters'' introduced by Talagrand 
to keep suitably track of the ``past'' of the construction, c.f. \cite[Chapter 5]{Tab1}.

Theorem \ref{th:BC} yields another striking characterization of boundedness for Bernoulli processes. 
For a random variable $X$ and $p>0$ we set $\|X\|_p:=(\Ex|X|^p)^{1/p}$. 

\begin{cor}
\label{cor:conv}
Suppose that $(X_t)_{t\in T}$ is a Bernoulli process with $b(T)<\infty$. Then there exist  $t^1,t^2,\ldots\in \ell^2$ 
such that
$T-T\subset \overline{\mathrm{conv}}\{t^n\colon\ n\geq 1\}$ and $\|X_{t^n}\|_{\log(n+2)}\leq Lb(T)$ for all $n\geq 1$.
\end{cor}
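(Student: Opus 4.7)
The plan is to combine Theorem~\ref{th:BC} with two ingredients: the trivial deterministic bound $|X_t|\leq \|t\|_1$, which controls every moment on the $\ell^1$-small piece, and the classical convex-hull characterization of Gaussian-bounded subsets of $\ell^2$ (a standard consequence of \eqref{eq:FerTal}), which handles the other piece. Applying Theorem~\ref{th:BC} we write $T\subset T_1+T_2$ with $\sup_{t\in T_1}\|t\|_1\leq Lb(T)$ and $g(T_2)\leq Lb(T)$, so $T-T\subset(T_1-T_1)+(T_2-T_2)$ with $\sup_{t\in T_1-T_1}\|t\|_1\leq 2Lb(T)$ and $g(T_2-T_2)\leq 2Lb(T)$.

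For the $\ell^1$ piece, separability of $\ell^2(I)$ (which holds since $I$ is countable) lets us choose a countable $\ell^2$-dense subset $(u^n)_{n\ge 1}$ of $T_1-T_1$. Then $T_1-T_1\subset \overline{\mathrm{conv}}\{u^n:n\ge 1\}$, and since $|X_{u^n}|\leq \|u^n\|_1\leq 2Lb(T)$ almost surely we get $\|X_{u^n}\|_p\leq 2Lb(T)$ uniformly in $p\geq 1$. For the Gaussian piece I invoke the known corollary of \eqref{eq:FerTal}: for any $S\subset\ell^2$ with $g(S)<\infty$ there exist $s^n\in \ell^2$ with $S\subset \overline{\mathrm{conv}}\{s^n:n\ge 1\}$ and $\sqrt{\log(n+2)}\,\|s^n\|_2\leq Lg(S)$ for every $n$; these $s^n$ can be extracted from an almost optimal admissible sequence of partitions for $\gamma_2(S)$ via a greedy re-enumeration by $\ell^2$-size. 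Applied to $S=T_2-T_2$ this yields $s^n$ with $\sqrt{\log(n+2)}\,\|s^n\|_2\leq Lb(T)$, and the Khintchine--Kahane inequality $\|X_{s^n}\|_p\leq L\sqrt{p}\,\|s^n\|_2$ then gives $\|X_{s^n}\|_{\log(n+2)}\leq Lb(T)$.

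Finally, interleave the two families by setting $t^{2n-1}=2u^n$ and $t^{2n}=2s^n$. For any $t\in T-T$, decompose $t=v+w$ with $v\in T_1-T_1$ and $w\in T_2-T_2$; then $2v\in\overline{\mathrm{conv}}\{2u^n\}$, $2w\in\overline{\mathrm{conv}}\{2s^n\}$, and $t=\tfrac12(2v)+\tfrac12(2w)\in\overline{\mathrm{conv}}\{t^n:n\ge 1\}$. The moment bounds $\|X_{t^n}\|_{\log(n+2)}\leq Lb(T)$ survive after absorbing the factor $2$ from the rescaling and a factor $\sqrt{2}$ coming from the index shift $\log(2n+2)\leq 2\log(n+2)$. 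The deep content of the corollary is entirely Theorem~\ref{th:BC}; the only non-elementary external input is the Gaussian convex-hull characterization, which is classical, so I expect no genuine obstacle beyond bookkeeping the constants.
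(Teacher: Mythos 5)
Your proof is correct and follows essentially the same route as the paper: apply Theorem~\ref{th:BC}, absorb the $\ell^1$ piece via the almost sure bound $|X_t|\leq\|t\|_1$, and handle the Gaussian piece via the convex-hull corollary of the majorizing measure theorem together with Khintchine's inequality. The only (cosmetic) difference is that for $T_1-T_1$ you use a countable dense subset, whereas the paper uses the observation $T_1-T_1\subset L b(T)\,\overline{\mathrm{conv}}\{\pm e_i\}$; both work for the same reason.
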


The converse statement easily follows from  the union bound and Chebyshev's inequality. Indeed,
suppose that $T-T\subset \overline{\mathrm{conv}}\{t^n\colon\ n\geq 1\}$ and $\|X_{t^n}\|_{\log(n+2)}\leq M$.  
Then for $u\geq 1$,
\begin{align*}
\Pr\Big(\sup_{s\in T-T}X_s\geq uM\Big)&\leq \Pr\Big(\sup_{n\geq 1}X_{t^n}\geq uM\Big)\leq
\sum_{n\geq 1}\Pr(X_{t^n}\geq u\|X_{t^n}\|_{\log(n+2)})
\\
&\leq \sum_{n\geq 1}u^{-\log(n+2)}
\end{align*}
and integration by parts easily yields $\Ex\sup_{s\in T-T}X_t\leq LM$. Moreover for any $t_0\in T$,
\[
b(T)=\Ex\sup_{t\in T}(X_t-X_{t_0})=\Ex\sup_{t\in T}(X_{t-t_0})\leq \Ex\sup_{s\in T-T}X_s\leq LM.
\]

One of the motivations to state the Bernoulli Conjecture was a question of X.\ Fernique about vector-valued random 
Fourier series (which we solve in Theorem \ref{th:Fe} below). Another interesting application of Theorem \ref{th:BC} 
is a Levy-Ottaviani type
maximal inequality for VC-classes (Theorem \ref{th:maxVC}).

To put Theorem \ref{th:BC} in a proper perspective, we will briefly explain that is it just the first step towards a much 
more ambitious program outlined in Talagrand's book \cite{Tab2}. One way to describe \eqref{eq:FerTal} in words is that 
``chaining explains the size of Gaussian processes''. The best chaining bound one can obtain for the supremum of a 
Gaussian process is of the correct order. Now, the bound $\sum_i t_i \varepsilon_i\leq \sum_i |t_i|$ on a Bernoulli process 
is of a different nature, in the sense that it makes no use of cancellation between the various terms. In some sense, 
Theorem \ref{th:BC} can be reformulated as ``chaining explains the part of boundedness which is due to cancellation''. 
That is, chaining explains the boundedness of the part $T_2$ of the process, while the boundedness of the $T_1$ part 
owes nothing to cancellation. It is argued in \cite{Tab2} that the phenomenon that  ``chaining explains the part of 
boundedness due to cancellation'' could be true in many more situations (empirical processes, infinitely divisible processes). 
Here we just briefly discuss the case of empirical processes.   

Let $(X_i)_{i\leq N}$ be i.i.d.\ r.v's with values in a measurable space $(S,{\cal S})$ and ${\cal F}$ be a class of 
measurable functions on $S$. It is a fundamental problem, strictly related to the investigation of uniform laws of large 
numbers, uniform central limit theorems and various applications in asymptotic statistics c.f. \cite{Du2,VW},
to relate the quantity
\begin{equation}
\label{eq:supemp}
\Ex\sup_{f\in {\cal F}}\sum_{i\leq N}(f(X_i)-\Ex f(X_i)) 
\end{equation}
with the geometry of the class ${\cal F}$. A first situation is when one already controls 
\[
\Ex\sup_{f\in {\cal F}}\sum_{i\leq N} |f(X_i)|,
\]
a situation where there is no cancellation. A second situation is when one can bound the quantity 
\eqref{eq:supemp} using chaining. Since then one has to use Bernstein's inequality \eqref{eq:Ber}, this requires not only 
a control  of the size of ${\cal F}$ with respect to the $\ell^2$ norm, but also with respect to the $\ell^\infty$ norm. 
Talagrand then  conjectures that the general situation is a mixture of these two cases. The precise technical statement is
given in  Conjecture \ref{conj:emp} below.

A discretized version of this problem concerning the``selector processes" based
on the i.i.d. sequence $(\delta_i)_{i\in I}$ will also be discussed in Section 9.

In a somewhat different direction, we would like to mention a very beautiful  generalization of the Bernoulli Conjecture  
formulated by S.Kwapie\'n (private communication).

\begin{prob}
Let $(F,\| \cdot \|)$ be a normed space and $(u_i)$ be a sequence of vectors in $F$ such that
the series $\sum_{i\geq 1}u_i\ve_i$ converges a.s. Does there exist a universal constant $L$ and a decomposition 
$u_i=v_i+w_i$ such that
\[
\Ex\Big\|\sum_{i\geq 1}v_ig_i\Big\|\leq L\Ex\Big\|\sum_{i\geq 1}u_i\ve_i\Big\| \quad \mbox{and}\quad 
\sup_{\eta_i=\pm 1}\Big\|\sum_{i\geq 1}w_i\eta_i\Big\|\leq L\Ex\Big\|\sum_{i\geq 1}u_i\ve_i\Big\|?
\]
\end{prob}

Theorem \ref{th:BC} shows that the answer is positive for $F=\ell^{\infty}$, in general however we may only assume that 
$F$ is a subspace of $\ell^\infty$. The difficulty here is that our proof gives very little additional information about
the decomposition given by Theorem \ref{th:BC}, in particular there is no reason for sets $T_1$ and $T_2$ to be contained 
in the linear space spanned by the index set $T$.

\medskip

The paper is organized as follows. In Section \ref{sec:est} we gather general results about Bernoulli processes. The main new
ingredient there is Proposition \ref{prop:prop1}. In Section \ref{sec:part} we show how to reduce finding a required decomposition
of the index set to constructing a suitable admissible sequence of partitions. In Section \ref{sec:chop} on the base of 
chopping maps we define functionals and in Section \ref{sec:dec} we show that they satisfy a Talagrand-type decomposition condition 
stated in Corollary \ref{cor:maindec}. 
In Section \ref{sec:constr} we inductively construct a required 
admissible sequence of partitions and conclude proofs of the main results stated above in Section \ref{sec:proofs}. 
In Section \ref{sec:appl} we present two applications of our main result and in the last Section \ref{sec:probl}
we discuss in more details the situation of  ``selector processes''.

\medskip

\noindent {\bf Acknowledgments.} We would like to thank professors Stanis{\l}aw Kwapie\'n and Michel Talagrand for 
constant encouragement to work on the problem. Upon seeing our original proof, M.  Talagrand was able 
to simplify a number of technical details, and we are grateful to him for allowing us to freely use some of his arguments.

\medskip

\noindent{\bf Notation.} By $(\ve_i)_{i}$ and $(\ve_{i,j})_{i,j}$ we  denote independent Bernoulli sequences. We use letter 
$L$ to denote positive universal constants that may change from line to line, and $L_{i}$ for positive universal constants that are the same at each occurrence. 

By $\Delta_{\ell^2(I)}(T)$ (or $\Delta_2(T)$ if the set $I$ is clear from the context) we  denote the diameter
with respect to the $\ell^2$-distance of the set $T\subset \ell^2(I)$.

\section{Estimates for Bernoulli processes}
\label{sec:est}

In the first part of this section we gather several well known estimates for suprema of Bernoulli processes and
discuss some of their consequences that play a crucial role in the proof of main result.

We start with the following simple bound on the diameter of the index set.

\begin{lem}
\label{lem:diamest}
For any $T\subset \ell^2(I)$ we have $\Delta_2(T)\leq 4b(T)$.
\end{lem}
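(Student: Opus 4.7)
The plan is to reduce the diameter bound to a two-point comparison: for an arbitrary pair $s,t\in T$, I will bound $\|s-t\|_2$ by $b(T)$ using only the fact that $b(T)$ dominates $\Ex\max(X_s,X_t)$, plus a standard Khintchine-type lower bound that converts the $L_1$ norm of a Rademacher sum back into its $\ell^2$ norm. Taking the supremum over $s,t$ then gives the claim.

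For the first step, fix $s,t\in T$. Since $X_u=\sum_i u_i\ve_i$ is a centered process, I have $\Ex X_s=\Ex X_t=0$, and by definition of $b(T)$ (which is a supremum over finite subsets)
\[
b(T)\ \ge\ \Ex\max(X_s,X_t)\ =\ \tfrac12\Ex(X_s+X_t)+\tfrac12\Ex|X_s-X_t|\ =\ \tfrac12\Ex|X_s-X_t|.
\]
Thus $\Ex|X_s-X_t|\le 2b(T)$, and the problem is reduced to bounding $\|s-t\|_2$ by a universal constant times $\Ex|X_s-X_t|$.

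For the second step, note $X_s-X_t=\sum_i (s_i-t_i)\ve_i$ is itself a Rademacher sum. By Khintchine's inequality (with the sharp Szarek constant $1/\sqrt{2}$, though any explicit constant would do here),
\[
\Ex\Big|\sum_i a_i\ve_i\Big|\ \ge\ \tfrac{1}{\sqrt 2}\Big(\sum_i a_i^2\Big)^{1/2},
\]
applied with $a_i=s_i-t_i$, gives $\Ex|X_s-X_t|\ge \tfrac{1}{\sqrt 2}\|s-t\|_2$. Combining with Step 1 yields $\|s-t\|_2\le 2\sqrt{2}\,b(T)\le 4b(T)$; taking the supremum over $s,t\in T$ finishes the proof.

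There is essentially no obstacle here: the only nontrivial ingredient is the Khintchine lower bound, which is classical. If one prefers to avoid quoting Szarek's sharp constant, the same conclusion (with a slightly worse but still admissible constant) follows from the Paley--Zygmund inequality applied to $|X_s-X_t|^2$ together with the elementary bound $\Ex|X_s-X_t|^4\le 3\|s-t\|_2^4$. Either route lands comfortably within the stated factor of $4$.
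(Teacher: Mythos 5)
Your proof is correct and follows essentially the same route as the paper: compare $b(T)$ with $\Ex\max\{X_s,X_t\}=\tfrac12\Ex|X_s-X_t|$ for a fixed pair, then apply a Khintchine-type lower bound to recover $\|s-t\|_2$. The only cosmetic difference is that you invoke the sharp constant $1/\sqrt2$ where the paper uses the cruder (and easier) bound $\Ex|\sum_i a_i\ve_i|\ge\tfrac12\|a\|_2$; either lands within the stated factor of $4$.
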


\begin{proof}
Let $X_t:=\sum_{i}t_i\ve_i$ for $t\in T$. Take any $t,s\in T$, then
\[
b(T)\geq \Ex\max\{X_t,X_s\}=\Ex\max\{X_t-X_s,0\}=\frac{1}{2}\Ex|X_t-X_s|\geq \frac{1}{4}\|t-s\|_2.
\]
\end{proof}

Obviously by Jensen's inequality we have
\begin{equation}
\label{eq:trivialcomp}
\Ex\sup_{t\in T}\sum_{i\in J}t_i\ve_i\leq \Ex\sup_{t\in T}\sum_{i\in I}t_i\ve_i \quad \mbox{for }
J\subset I. 
\end{equation}
Much less trivial is the following Talagrand's comparison theorem for Bernoulli processes (cf. Theorem 2.1 in \cite{Ta_infdiv} 
or the proof of Theorem 4.12 in \cite{LT}).

\begin{thm}
\label{th:contr}
Suppose $\varphi_i\colon \er\rightarrow \er$, $i\in I$ are contractions (i.e. 
$|\varphi_i(x)-\varphi_i(y)|\leq |x-y|$) and $\varphi_i(0)=0$ for all $i\in I$. Then for any
$T\subset \ell^2(I)$,
\[
\Ex\sup_{t\in T}\sum_{i\in I}\varphi_i(t_i)\ve_i\leq \Ex\sup_{t\in T}\sum_{i\in I}t_i\ve_i.
\]
\end{thm}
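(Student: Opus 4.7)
The plan is to reduce the statement to a single-coordinate swap and then iterate. First I would note that by the convention $\Ex\sup_{t\in T}=\sup_F\Ex\sup_{t\in F}$ over finite $F\subset T$, it suffices to treat finite index sets $T$. For such $T$, since $\varphi_i(0)=0$ and $\varphi_i$ is a contraction we have $|\varphi_i(t_i)|\leq|t_i|$, so both series converge in $L^1$ and truncating $I$ to an increasing sequence of finite subsets $I_n\uparrow I$ yields, by dominated convergence, convergence of the expected suprema. Thus it suffices to prove the inequality when $I$ is finite.

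For finite $I$ the plan is to induct on $|I|$ by replacing the contractions $\varphi_i$ by the identity one at a time. The inductive step reduces to the following one-coordinate statement, which is the heart of the proof:
\begin{lem*}
Let $\varphi\colon \er\to\er$ be a contraction with $\varphi(0)=0$, let $S$ be a finite set, and let $\psi\colon S\to\er$ and $a\colon S\to\er$ be arbitrary. Then for a single Bernoulli r.v.\ $\ve$,
\[
\Ex\sup_{s\in S}\bigl[\psi(s)+\varphi(a(s))\ve\bigr]\leq \Ex\sup_{s\in S}\bigl[\psi(s)+a(s)\ve\bigr].
\]
\end{lem*}
Conditioning on $(\ve_j)_{j\neq i}$ and applying this lemma to the function $a(t)=t_i$ and to $\psi(t)$ equal to the (random) sum of the remaining terms (with $\varphi_j$ already removed for $j<i$ in some fixed ordering), I can replace $\varphi_i$ by the identity without decreasing the expected supremum. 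Iterating over $i\in I$ gives the theorem in the finite case.

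To prove the one-coordinate lemma I would write the expectation over $\ve=\pm 1$ explicitly:
\[
\Ex\sup_{s\in S}\bigl[\psi(s)+\varphi(a(s))\ve\bigr]=\frac{1}{2}\sup_{s,s'\in S}\bigl[\psi(s)+\psi(s')+\varphi(a(s))-\varphi(a(s'))\bigr],
\]
and similarly for the right-hand side. Given any pair $(s,s')$, the contraction property yields $\varphi(a(s))-\varphi(a(s'))\leq |a(s)-a(s')|$. If $a(s)\geq a(s')$ the right-hand side equals $a(s)-a(s')$ and I keep the pair $(s,s')$; otherwise I swap to $(s',s)$, which leaves $\psi(s)+\psi(s')$ unchanged and replaces $a(s)-a(s')$ by $a(s')-a(s)=|a(s)-a(s')|$. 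In either case we produce a pair achieving a value at least as large on the right-hand side, proving the lemma.

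The only delicate point is really the bookkeeping of the pairing/swap argument and making sure the approximation in $I$ passes through the definition of $\Ex\sup$; the hypothesis $\varphi_i(0)=0$ is what makes the reduction to finite $I$ legitimate and without it the result would fail by a simple shift. The swap argument itself is a two-line exercise; the rest of the proof is a routine conditioning-and-induction wrapper around it.
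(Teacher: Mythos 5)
Your proof is correct: the reduction to a single coordinate by conditioning, followed by the averaging/swap argument over the two values of $\ve$, is exactly the classical proof of Talagrand's comparison (contraction) principle as given in the proof of Theorem 4.12 of Ledoux–Talagrand, which is precisely the source the paper cites for this statement rather than proving it itself. The one-coordinate lemma and the passage from finite to countable $I$ (using $|\varphi_i(t_i)|\leq|t_i|$ and the $\sup_F$ convention) are both handled correctly, so there is nothing to add.
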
 

\begin{rem}
Since 
\[
\Ex\sup_{t\in T}\sum_{i\in I}\varphi_i(t_i)\ve_i=
\Ex\sup_{t\in T}\sum_{i\in I}(\varphi_i(t_i)-\varphi_i(0))\ve_i
\]
we may replace the assumption that $\varphi_i(0)=0$ with $(\varphi_i(0))\in \ell^2(I)$ (which for contractions is equivalent to
$(\varphi_i(t_i))\in \ell^2(I)$ for some/all $t\in \ell^2(I)$).
\end{rem}

A typical application of Theorem \ref{th:contr} is the following.

\begin{cor}
\label{cor:contr}
Suppose that $(f_{i,j})$ and $(g_i)$ are functions on $\er$ such that for all $i\in I$, $x,y\in\er$,
\[
\sum_{j\in J}|f_{i,j}(x)-f_{i,j}(y)|\leq |g_i(x)-g_i(y)|. 
\]
Let $T$ be a set such that $(g_i(t_i))\in \ell^2(I)$ and $(f_{i,j}(t_i))\in \ell^2(I\times J)$ for all $t\in T$. Then
\[
\Ex\sup_{t\in T}\sum_{i\in I,j\in J}f_{i,j}(t_i)\ve_{i,j}\leq 
\Ex\sup_{t\in T}\sum_{i\in I}g_i(t_i)\ve_i.
\]
\end{cor}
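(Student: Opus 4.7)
The plan is to reduce the corollary to Theorem \ref{th:contr} by a symmetrisation trick that realises $\sum_{j}f_{i,j}(t_i)\ve_{i,j}$ as a random $1$-Lipschitz function of $g_i(t_i)$. The overall strategy consists of three steps: (i) a centering reduction, (ii) a parametrisation producing a random contraction for each $i$, and (iii) a symmetrisation that inserts an auxiliary Bernoulli sequence to which Theorem \ref{th:contr} can be applied conditionally.

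First I would center at some fixed $t^{\ast}\in T$: replace $f_{i,j}(\cdot)$ by $f_{i,j}(\cdot)-f_{i,j}(t^{\ast}_i)$ and $g_i(\cdot)$ by $g_i(\cdot)-g_i(t^{\ast}_i)$. The hypothesis is preserved and, since $\sum_{i,j}f_{i,j}(t^{\ast}_i)\ve_{i,j}$ and $\sum_{i}g_i(t^{\ast}_i)\ve_i$ are mean zero, neither side of the desired inequality changes. Hence I may assume $f_{i,j}(t^{\ast}_i)=g_i(t^{\ast}_i)=0$ for all $i,j$.

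Next I would introduce $Z_i(x):=\sum_{j\in J}f_{i,j}(x)\ve_{i,j}$ and define, on $\hat T_i:=\{g_i(t_i):t\in T\}$, a random function $\tilde Z_i$ by $\tilde Z_i(g_i(x)):=Z_i(x)$. The hypothesis forces $f_{i,j}(x)=f_{i,j}(y)$ for every $j$ whenever $g_i(x)=g_i(y)$, so $\tilde Z_i$ is well-defined, and the same hypothesis gives
\[
|\tilde Z_i(g_i(x))-\tilde Z_i(g_i(y))|\leq \sum_{j\in J}|f_{i,j}(x)-f_{i,j}(y)|\leq |g_i(x)-g_i(y)|
\]
almost surely, so $\tilde Z_i$ is $1$-Lipschitz on $\hat T_i$. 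Since $0=g_i(t^{\ast}_i)\in\hat T_i$ and $\tilde Z_i(0)=Z_i(t^{\ast}_i)=0$, a McShane extension produces a random $1$-Lipschitz function $\tilde Z_i\colon\er\to\er$ vanishing at $0$ and jointly measurable in $\ve$.

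Finally I would symmetrise. Let $(\delta_i)_{i\in I}$ be a Bernoulli sequence independent of $(\ve_{i,j})$. Each process $(Z_i(x))_x$ is symmetric and the $Z_i$ are independent across $i$, hence $(Z_i)_i\stackrel{d}{=}(\delta_i Z_i)_i$. With $\hat T:=\{(g_i(t_i))_i:t\in T\}\subset\ell^2(I)$, this and Fubini give
\[
\Ex\sup_{t\in T}\sum_{i,j}f_{i,j}(t_i)\ve_{i,j}
=\Ex_{\ve}\sup_{u\in\hat T}\sum_i \tilde Z_i(u_i)
=\Ex_{\ve}\Ex_{\delta}\sup_{u\in\hat T}\sum_i \delta_i \tilde Z_i(u_i).
\]
Conditionally on $(\ve_{i,j})$ the $\tilde Z_i$ are deterministic contractions vanishing at $0$, so Theorem \ref{th:contr} applied to $\hat T$ and the Bernoulli sequence $(\delta_i)$ bounds the inner expectation by $\Ex_{\delta}\sup_{u\in\hat T}\sum_i \delta_i u_i=\Ex\sup_{t\in T}\sum_i g_i(t_i)\ve_i$, which proves the claim. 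The only mildly delicate point is the measurable McShane extension of the random contractions, together with verifying $(\tilde Z_i(u_i))_i\in\ell^2(I)$ a.s.; both follow straightforwardly from the standing assumption $(f_{i,j}(t_i))\in\ell^2(I\times J)$.
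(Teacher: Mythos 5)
Your proof is correct and follows essentially the same route as the paper: the identity $(Z_i)_i\stackrel{d}{=}(\delta_i Z_i)_i$ is exactly the paper's step of writing $\Ex\sup_t\sum_{i,j}f_{i,j}(t_i)\ve_{i,j}=\Ex\sup_t\sum_i\big(\sum_j f_{i,j}(t_i)\ve_{i,j}\big)\ve_i$, after which Theorem \ref{th:contr} is applied conditionally on $(\ve_{i,j})$ with the random contractions $u\mapsto\tilde Z_i(u)$. Your centering and McShane-extension steps merely make explicit the well-definedness details that the paper leaves implicit.
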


\begin{proof}
Without loss of generality we may assume that the sequences $(\ve_{i,j})$ and $(\ve_i)$ are independent. It is enough to observe
that
\[
\Ex\sup_{t\in T}\sum_{i\in I,j\in J}f_{i,j}(t_i)\ve_{i,j}=
\Ex\sup_{t\in T}\sum_{i\in I}\Big(\sum_{j\in J}f_{i,j}(t_i)\ve_{i,j}\Big)\ve_i
\]
and that for any values of $\ve_{i,j}\in \{\pm 1\}$ and $x,y\in \er$,
\[
\Big|\sum_{j\in J}f_{i,j}(x)\ve_{i,j}-\sum_{j\in J}f_{i,j}(y)\ve_{i,j}\Big|\leq |g_i(x)-g_i(y)|.
\]
The assertion follows by applying conditionally Theorem \ref{th:contr}.
\end{proof}

Next we state the concentration property of Bernoulli processes (cf. \cite{Ta_isop} or
\cite[Corollary 4.10]{Le}).

\begin{thm}
\label{th:concBern}
Let $(a_t)_{t\in T}$ be a sequence of real numbers indexed by a set $T\subset \ell^2(I)$ and
$S:=\sup_{t\in T}(a_{t}+\sum_{i\in I}t_{i}\ve_{i})$ be such that
$|S|<\infty$ a.s. Then
\[
\Pr(|S- \Med(S)|\geq u)\leq 4\exp\Big(-\frac{u^{2}}{16\sigma^{2}}\Big)
\quad \mbox{for } u>0,
\]
where $\sigma:=\sup_{t\in T}\|t\|_{2}$. In particular $\Ex|S|<\infty$,
$|\Ex S-\Med(S)|\leq L\sigma$ and
\[
\Pr(|S- \Ex(S)|\geq u)\leq 2\exp\Big(-\frac{u^{2}}{\Laa\sigma^{2}}\Big)
\quad \mbox{ for } u>0.
\]
\end{thm}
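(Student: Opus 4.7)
The plan is to recognize $S$ as a convex, $\sigma$-Lipschitz function on the discrete cube $\{-1,+1\}^I$ and apply Talagrand's convex-hull distance inequality. First I would reduce to the case of finite $T$ and finite $I$ by choosing an increasing sequence of finite subsets of $T$ whose supremum recovers $S$ and truncating $I$; monotone convergence then transfers the median and tail bounds to the original process. After this reduction, $S = f(\ve)$ where $f(x) := \sup_{t\in T}\bigl(a_t + \sum_i t_i x_i\bigr)$ is a supremum of affine functions on $[-1,1]^I$, hence convex, and Cauchy--Schwarz gives the Lipschitz estimate $|f(x) - f(y)| \leq \sigma \|x - y\|_2$.

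The heart of the argument is an upper-tail bound around $M := \Med(S)$. Set $A := \{x \in \{-1,+1\}^I : f(x) \leq M\}$, so $\Pr(\ve \in A) \geq 1/2$. For any probability measure $\mu$ supported in $A$, Jensen's inequality yields $f(\Ex_\mu \eta) \leq \Ex_\mu f(\eta) \leq M$, and writing $p_i := \Pr_\mu(\eta_i \neq \ve_i)$ one uses $\ve_i^2 = 1$ to compute $\ve_i - \Ex_\mu \eta_i = 2\ve_i p_i$, so that $\|\ve - \Ex_\mu \eta\|_2 = 2\|(p_i)\|_2$. The Lipschitz estimate then yields $(f(\ve) - M)_+ \leq 2\sigma \|(p_i)\|_2$, and minimizing over $\mu$ gives the crucial inequality $(f(\ve) - M)_+ \leq 2\sigma\, d_T(\ve, A)$, where $d_T$ is Talagrand's convex-hull distance. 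Combining this with the convex distance inequality $\Pr(A) \cdot \Pr(d_T(\cdot, A) \geq v) \leq e^{-v^2/4}$ (cf.~\cite{Ta_isop}) produces $\Pr(S \geq M + u) \leq 2 e^{-u^2/(16\sigma^2)}$.

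For the lower tail I would apply exactly the same distance bound with $A' := \{f \leq M - u\}$ in place of $A$: this gives $d_T(\ve, A') \geq u/(2\sigma)$ on the event $\{f(\ve) \geq M\}$, which has probability at least $1/2$, so the convex distance inequality forces $\Pr(A') \leq 2 e^{-u^2/(16\sigma^2)}$. Summing the two tails yields the claimed $4\exp(-u^2/(16\sigma^2))$ deviation bound around the median.

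The remaining assertions are then routine consequences. Integrating the tail estimate yields $\Ex|S - M| \leq L\sigma$, which gives both $\Ex|S| < \infty$ and $|\Ex S - M| \leq L\sigma$, and the concentration around the mean follows from $|S - \Ex S| \leq |S - M| + L\sigma$ by absorbing the additive shift into the exponent (enlarging $16$ to some $\Laa$). The main delicate point I anticipate is the derivation of the bound $(f(\ve) - M)_+ \leq 2\sigma\, d_T(\ve, A)$: the identity $\ve_i - \Ex_\mu \eta_i = 2\ve_i p_i$ is very specific to the cube and is precisely what converts the $\ell^2$-Lipschitz control of $f$ into a usable bound on the combinatorial convex-hull distance on which Talagrand's isoperimetry acts.
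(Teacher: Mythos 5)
Your proof is correct and is precisely the argument behind the references the paper cites for this statement (the paper gives no proof of Theorem \ref{th:concBern}, only the pointers to \cite{Ta_isop} and \cite[Corollary 4.10]{Le}): recognize $S$ as a convex $\sigma$-Lipschitz function of the signs, bound $(f(\ve)-M)_+$ by $2\sigma$ times the convex-hull distance to $\{f\le M\}$, and invoke Talagrand's convex distance inequality for both tails. The key identity $\ve_i-\Ex_\mu\eta_i=2\ve_i p_i$, the resulting constant $16$, and the passage to the mean by absorbing $|\Ex S-\Med(S)|\le L\sigma$ into the exponent all check out.
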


Theorem \ref{th:concBern} easily implies the following fact \cite[Corollary 1]{La}.

\begin{prop}
\label{prop:sup_m}
Let $(Y_{t}^{k})_{t\in T}$, $1\leq k\leq m$ be i.i.d. Bernoulli processes
and $\sigma:=\sup_{t\in T}\|Y_{t}^{1}\|_{2}$. Then for any
process $(Z_{t})_{t\in T}$ independent of $(Y_{t}^{k}\colon t\in T, k\leq m)$
 we have
\[
\Ex \max_{1\leq k\leq m}\sup_{t\in T}(Z_t+Y_{t}^{k})
\leq \Ex \sup_{t\in T}(Z_t+Y_{t}^{1})+ \La\sigma\sqrt{\log m}.
\]
\end{prop}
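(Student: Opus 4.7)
The plan is to condition on the process $(Z_t)_{t\in T}$ and reduce to a standard maximum-of-subgaussians bound via the Bernoulli concentration inequality (Theorem~\ref{th:concBern}). Set $S_k:=\sup_{t\in T}(Z_t+Y_t^k)$ for $k=1,\ldots,m$. Since $(Z_t)$ is independent of the family $(Y_t^k)_{t,k}$ and the processes $(Y_t^k)_{t\in T}$ are i.i.d.\ across $k$, conditionally on $(Z_t)_{t\in T}$ the random variables $S_1,\ldots,S_m$ are i.i.d.

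The first key step is to observe that conditionally on $(Z_t)_{t\in T}$, each $S_k$ has exactly the structure treated by Theorem~\ref{th:concBern}: it equals $\sup_{t\in T}(a_t+\sum_{i\in I}t_i\ve_i^k)$ with deterministic coefficients $a_t=Z_t$, and $\sup_{t\in T}\|t\|_2=\sup_{t\in T}\|Y_t^1\|_2=\sigma$. (Finiteness of $\Ex\sup_{t\in T}(Z_t+Y_t^1)$, without which there is nothing to prove, guarantees $S_k<\infty$ a.s.\ conditionally on $Z$ so the hypothesis of Theorem~\ref{th:concBern} is met.) Therefore, writing $\mu:=\Ex[S_1\mid Z]$,
\[
\Pr\bigl(|S_k-\mu|\geq u\,\big|\,Z\bigr)\leq 2\exp\Bigl(-\frac{u^2}{\Laa\sigma^2}\Bigr),\qquad u>0.
\]

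The second step is the standard union-bound/integration argument. A crude union bound gives $\Pr(\max_k S_k-\mu\geq u\mid Z)\leq 2m\exp(-u^2/(\Laa\sigma^2))$. Choosing the threshold $u_0:=\sigma\sqrt{\Laa\log(2m)}$ and integrating the tail,
\[
\Ex\bigl[(\max_k S_k-\mu)_+\,\big|\,Z\bigr]
\leq u_0+\int_{u_0}^\infty 2m\exp\Bigl(-\frac{u^2}{\Laa\sigma^2}\Bigr)\,du
\leq L\sigma\sqrt{\log m}+L\sigma,
\]
where for the Gaussian-type integral one uses the bound $\int_{u_0}^\infty e^{-u^2/c^2}\,du\leq (c^2/(2u_0))e^{-u_0^2/c^2}$. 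Thus $\Ex[\max_k S_k\mid Z]\leq\mu+L\sigma\sqrt{\log m}$ (absorbing the additive $L\sigma$ into the $\sqrt{\log m}$ term for $m\geq 2$; for $m=1$ the inequality is trivial).

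The final step is to take expectations in $Z$ using the tower property: $\Ex\max_k S_k=\Ex\,\Ex[\max_k S_k\mid Z]\leq\Ex\mu+L\sigma\sqrt{\log m}=\Ex S_1+L\sigma\sqrt{\log m}$, which is the desired inequality with $\La:=L$. There is no real obstacle here; the only point requiring care is justifying the application of Theorem~\ref{th:concBern} conditionally, which rests on the independence assumption between $(Z_t)$ and $(Y_t^k)$ and on the deterministic nature of $\sigma$.
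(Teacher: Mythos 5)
Your proof is correct and is exactly the argument the paper intends: the paper gives no proof of Proposition~\ref{prop:sup_m}, stating only that it follows easily from Theorem~\ref{th:concBern} (citing [La, Corollary~1]), and your conditioning on $Z$, applying the concentration inequality to each $S_k$, and integrating the union bound is that standard derivation. All the delicate points (conditional applicability of Theorem~\ref{th:concBern}, absorbing the additive $L\sigma$ for $m\geq 2$, triviality for $m=1$) are handled properly.
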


Another important property of Bernoulli processes is a Sudakov-type minoration formulated and proved by Talagrand
(cf. \cite{Ta_infdiv} or \cite[Theorem 4.2.4]{Tab1}).

\begin{thm}
\label{th:SudBern}
Suppose that vectors $t_{1},\ldots,t_{m}\in \ell^{2}(I)$ and numbers $a,b>0$ satisfy
\begin{equation}
\label{eq:assSud1}
\forall_{l\neq l'}\ \|t_{l}-t_{l'}\|_{2}\geq a\quad
\mbox{ and }\quad \forall_{l}\ \|t_{l}\|_{\infty}\leq b.
\end{equation}
Then
\[
\Ex\sup_{l\leq m}\sum_{i\in I}t_{l,i}\ve_{i}\geq
\frac{1}{\Lb}\min\Big\{a\sqrt{\log m},\frac{a^{2}}{b}\Big\}.
\]
\end{thm}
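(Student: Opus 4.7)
The plan is to deduce the minoration from a Gaussian-type lower tail for individual Rademacher sums applied to the pairwise differences $t_l-t_{l'}$, followed by a combinatorial argument that lifts the pairwise bounds to a bound on the supremum.

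First I would establish the following quantitative lower tail: there are absolute constants $c_0,C_0>0$ such that for every $y\in\ell^2(I)$ with $\sigma=\|y\|_2$ and $\|y\|_\infty\le\beta$, and every $0<v\le c_0\sigma^2/\beta$,
\[
\Pr\Bigl(\sum_i y_i\ve_i\ge v\Bigr)\ge c_0\exp(-C_0 v^2/\sigma^2).
\]
This estimate is proved by Cram\'er inversion: in the range $|\lambda|\le 1/\beta$ the logarithmic moment generating function $\sum_i\log\cosh(\lambda y_i)$ is comparable to the Gaussian value $\lambda^2\sigma^2/2$, so inverting the usual Chernoff upper bound yields a matching Gaussian-type lower tail in the appropriate range.

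Applying this to $y=t_l-t_{l'}$, which satisfies $\sigma\ge a$ and $\|y\|_\infty\le 2b$, and to $v=c_1\min(a\sqrt{\log m},a^2/b)$ for a sufficiently small absolute constant $c_1$, the range condition $v\le c_0\sigma^2/(2b)$ holds and the exponent $v^2/\sigma^2$ is at most $c_1^2\log m$ in both regimes $b\sqrt{\log m}\le a$ and $b\sqrt{\log m}>a$, so
\[
\Pr(S_l-S_{l'}\ge c_1 r)\ge c_0 m^{-1/4}\quad\text{for every pair }l\ne l',
\]
where $r:=\min(a\sqrt{\log m},a^2/b)$ and $S_l=\sum_i t_{l,i}\ve_i$. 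By the symmetry $\ve\stackrel{d}{=}-\ve$ one has $\Pr(-S_{l'}\ge c_1 r/2)=\Pr(S_{l'}\ge c_1 r/2)$, and the inclusion $\{S_l-S_{l'}\ge c_1 r\}\subseteq\{S_l\ge c_1 r/2\}\cup\{-S_{l'}\ge c_1 r/2\}$ yields $p_l+p_{l'}\ge c_0 m^{-1/4}$ where $p_l:=\Pr(S_l\ge c_1 r/2)$. Summing over pairs gives $\sum_l p_l\gtrsim m^{3/4}$.

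The main obstacle is the final step of converting the bound $\sum_l p_l\gtrsim m^{3/4}$ into the desired $\Pr(\max_l S_l\ge c_1 r/2)\gtrsim 1$ (which then delivers $\Ex\max_l S_l\gtrsim c_1 r$, as required). A naive Paley-Zygmund argument on $N=\#\{l:S_l\ge c_1 r/2\}$ loses a factor of $m^{1/4}$, because one does not immediately control the joint probabilities $\Pr(S_l\ge c_1 r/2,\,S_{l'}\ge c_1 r/2)$ by the product $p_l p_{l'}$. To close this gap I would look for a decoupling device, for instance restricting each $t_l$ to a subset $I_l\subset I$ chosen so that the sub-sums $S_l|_{I_l}$ are jointly independent while still satisfying $\|t_l|_{I_l}-t_{l'}|_{I_l}\|_2\gtrsim a$, or else using an auxiliary independent Rademacher sequence to symmetrize and decorrelate pairs. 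This decoupling step is where I expect the argument to be most delicate.
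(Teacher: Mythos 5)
The paper does not actually prove Theorem \ref{th:SudBern}: it is imported from Talagrand (\cite{Ta_infdiv}, \cite[Theorem 4.2.4]{Tab1}), so there is no internal argument to compare yours with. Judged on its own terms, the first half of your proposal is correct and standard: the Kolmogorov-type reverse Chernoff bound $\Pr(\sum_i y_i\ve_i\ge v)\ge c_0\exp(-C_0v^2/\sigma^2)$ in the range $v\le c_0\sigma^2/\|y\|_\infty$ is a true fact provable by exponential tilting, your check that $v=c_1\min\{a\sqrt{\log m},a^2/b\}$ lies in the admissible range for $y=t_l-t_{l'}$ is right, and the bookkeeping leading to $\sum_l p_l\ge c\,m^{3/4}$ is fine.

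The step you defer, however, is not a technical loose end but the entire content of the theorem, and there is an additional unflagged problem even before it. First, the implication ``$\Pr(\max_l S_l\ge c_1r/2)\ge c$ delivers $\Ex\max_l S_l\gtrsim c_1r$'' is false in general: $\max_l S_l$ is not nonnegative and the hypotheses control only the differences $t_l-t_{l'}$, not $\|t_l\|_2$. For $m=2$, $t_1=(b,\dots,b)$ on $n$ coordinates and $t_2=t_1+aw$ with $w$ a spread-out unit vector, one has $\Ex\max(S_1,S_2)=\frac12\Ex|aW|\le a$, while $\Pr(\max(S_1,S_2)\ge v)\ge\Pr(S_1\ge v)\approx\frac12$ for any $v\ll b\sqrt n$, which can be made arbitrarily large. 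The correct identity is $\Ex\sup_lS_l=\frac12\Ex\sup_{l,l'}(S_l-S_{l'})$, so the event whose probability must be bounded below by a constant is $\{\sup_{l\ne l'}(S_l-S_{l'})\ge v\}$ --- and there you face exactly the same correlation problem as with the $p_l$. Second, neither proposed decoupling device resolves that problem: one cannot in general assign disjoint index sets $I_l$ each carrying $\ell^2$-mass of order $a$ of every relevant difference, because all $m$ points may differ only on a single common block of about $a^2/b^2$ coordinates (this configuration is precisely what makes the $a^2/b$ term in the conclusion unavoidable), and an auxiliary Rademacher randomization does not remove the common part of the $S_l$. Talagrand's proof gets around the correlations by an entirely different mechanism (an extraction/induction over scales combined with concentration on the discrete cube, reducing ultimately to the Gaussian Sudakov minoration), not by a pairwise or second-moment argument. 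As written, your argument establishes only the much weaker bound $\Ex\sup_lS_l\ge c\,m^{-1/4}\min\{a\sqrt{\log m},a^2/b\}$ and correctly identifies, but does not bridge, the real gap.
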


Our next proposition combines concentration and minoration properties for Bernoulli processes
\cite[Proposition 4.2.2]{Tab1}. It exactly parallels the Gaussian case. 

\begin{prop}
\label{prop:min+conc}
Consider vectors  $t_{1},\ldots,t_{m}\in \ell^{2}(I)$ and numbers $a,b>0$
such that \eqref{eq:assSud1} holds. Then for any $\sigma>0$ and any sets
$H_{l}\subset B_{\ell^{2}(I)}(t_{l},\sigma)$,
\[
b\Big(\bigcup_{l\leq m}H_{l}\Big)\geq
\frac{1}{\Lc}\min\Big\{a\sqrt{\log m},\frac{a^{2}}{b}\Big\}
-\Ld\sigma\sqrt{\log m}+\min_{l\leq m} b(H_{l}).
\]
\end{prop}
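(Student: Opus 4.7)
The plan is to separate, for each $l$, the contribution of the ``anchor'' $X_{t_l}=\sum_i t_{l,i}\ve_i$ from the local fluctuation over $H_l$, and then feed the two pieces into the Sudakov minoration (Theorem~\ref{th:SudBern}) and Bernoulli concentration (Theorem~\ref{th:concBern}) respectively. Setting $X_t:=\sum_{i\in I}t_i\ve_i$, so that $b(\bigcup_l H_l)=\Ex\max_l\sup_{t\in H_l}X_t$, I would write
\[
\sup_{t\in H_l}X_t=X_{t_l}+W_l,\qquad W_l:=\sup_{t\in H_l}(X_t-X_{t_l}).
\]
A first observation is that $\Ex W_l=b(H_l)$, because translating $H_l$ by $-t_l$ does not change the expected supremum of the associated Bernoulli process (the linear term $\Ex X_{t_l}=0$ drops out).

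Next, applying the elementary bound $\max_l(\alpha_l+\beta_l)\geq\max_l\alpha_l+\min_l\beta_l$ to $\alpha_l=X_{t_l}+b(H_l)$ and $\beta_l=W_l-b(H_l)$, together with $X_{t_l}+b(H_l)\geq X_{t_l}+\min_{l'}b(H_{l'})$, gives the pointwise inequality
\[
\max_l(X_{t_l}+W_l)\geq \max_l X_{t_l}+\min_l b(H_l)-\max_l\bigl(b(H_l)-W_l\bigr).
\]
Taking expectations reduces the claim to a lower bound on $\Ex\max_l X_{t_l}$ and an upper bound on $\Ex\max_l\bigl(b(H_l)-W_l\bigr)$. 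The first is the exact content of Theorem~\ref{th:SudBern} under the assumption~\eqref{eq:assSud1}, and yields $\Ex\max_l X_{t_l}\geq \Lb^{-1}\min\{a\sqrt{\log m},a^2/b\}$.

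For the second estimate, each $W_l$ is the supremum of a Bernoulli process indexed by $H_l-t_l\subset B_{\ell^2(I)}(0,\sigma)$, so Theorem~\ref{th:concBern} applies with sub-Gaussian scale $\sup_{s\in H_l-t_l}\|s\|_2\leq\sigma$ and gives
\[
\Pr\bigl(b(H_l)-W_l\geq u\bigr)\leq 2\exp\bigl(-u^2/(\Laa\sigma^2)\bigr),\qquad u>0.
\]
A union bound over $l\leq m$ and a routine integration of the resulting tail then produce $\Ex\max_l\bigl(b(H_l)-W_l\bigr)\leq L\sigma\sqrt{\log m}$, which combined with the minoration yields the stated inequality for a suitable constant $\Ld$. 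The only point that could look subtle is that the random variables $b(H_l)-W_l$ are not independent; however Theorem~\ref{th:concBern} delivers a uniform sub-Gaussian tail, so the crude union bound is perfectly adequate for the $\sqrt{\log m}$ estimate and no joint coupling of $(X_{t_l})_l$ with $(W_l)_l$ is needed. Beyond this arithmetic decoupling of ``anchor'' and ``fluctuation'', I do not see any genuine obstacle.
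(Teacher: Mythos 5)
Your argument is correct: the pointwise inequality $\max_l(X_{t_l}+W_l)\geq \max_l X_{t_l}+\min_l b(H_l)-\max_l(b(H_l)-W_l)$ is valid, Theorem~\ref{th:SudBern} handles the first term, and the concentration-plus-union-bound-plus-integration step for the last term is exactly the device used elsewhere in the paper (cf.\ the derivation of \eqref{eq:est4}). This is essentially the standard proof from the cited reference \cite[Proposition 4.2.2]{Tab1}, which the paper itself does not reproduce, so there is nothing to add.
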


Proposition \ref{prop:min+conc} together with a simple greedy algorithm  yields the following 
decomposition result for Bernoulli processes. This again parallels the Gaussian case. 

\begin{cor}
\label{cor:greedy}
Suppose that $\|t\|_{\infty}\leq b$ for all $t\in T$ and $b\sqrt{\log m}\leq \sigma$. 
Then there exists sets $C_1,\ldots,C_{m-1}\subset T$ such that $\Delta_{\ell^2(I)}(C_i)\leq \Le\sigma$
and for each nonempty set $D\subset T\setminus\bigcup_{k\leq m-1}C_k$ with  $\Delta_{\ell^2(I)}(D)\leq \sigma$,
\[
b(D)\leq b(T)-\sigma\sqrt{\log m}.
\]
\end{cor}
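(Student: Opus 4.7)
I would prove this by the standard greedy construction from the Gaussian case, now driven by Proposition \ref{prop:min+conc}. Fix $a := \Le \sigma / 2$, where $\Le\ge 2$ will be pinned down at the end, and note that any set of the form $B_{\ell^2(I)}(t,a)\cap T$ has $\ell^2$-diameter at most $\Le\sigma$, which is exactly what the corollary requires of the $C_i$.

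\textbf{Greedy construction.} Inductively select points $t_l\in T$ and auxiliary subsets $D_l\subset T$ as follows. Set $T_1:=T$. Having produced $t_1,\dots,t_{l-1}$ and $D_1,\dots,D_{l-1}$, put $C_k:=B_{\ell^2(I)}(t_k,a)\cap T$ for $k<l$ and $T_l:=T\setminus\bigcup_{k<l}C_k$. If there exists a nonempty $D\subset T_l$ with $\Delta_2(D)\le\sigma$ and $b(D) > b(T)-\sigma\sqrt{\log m}$, pick any such $D$, set $D_l:=D$, and choose an arbitrary $t_l\in D_l$; otherwise stop. At termination, the stopping condition applied to $T_l$ is exactly the conclusion to be proved. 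Thus it remains only to verify that the procedure halts after at most $m-1$ selections.

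\textbf{Termination via Proposition \ref{prop:min+conc}.} Suppose for contradiction that the procedure runs for at least $m$ steps, producing $t_1,\dots,t_m$ and $D_1,\dots,D_m$. Since $t_l\in T_l$ while $t_k\in C_k$ for $k<l$, the points satisfy $\|t_l-t_k\|_2 > a$ for $l\neq k$; combined with the hypothesis $\|t_l\|_\infty\le b$, condition \eqref{eq:assSud1} holds with parameters $(a,b)$. Moreover $D_l\subset B_{\ell^2(I)}(t_l,\sigma)$ because $t_l\in D_l$ and $\Delta_2(D_l)\le\sigma$. Applying Proposition \ref{prop:min+conc} with $H_l:=D_l$ gives
\[
b(T)\ge b\Big(\bigcup_{l\le m}H_l\Big)\ge\frac{1}{\Lc}\min\Big\{a\sqrt{\log m},\frac{a^2}{b}\Big\}-\Ld\sigma\sqrt{\log m}+\min_{l\le m}b(H_l).
\]
The hypothesis $b\sqrt{\log m}\le\sigma$ combined with $a\ge\sigma$ gives $a/b\ge\sqrt{\log m}$, hence $a^2/b\ge a\sqrt{\log m}$, so the minimum equals $a\sqrt{\log m}=(\Le/2)\sigma\sqrt{\log m}$. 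Using $\min_l b(H_l) > b(T)-\sigma\sqrt{\log m}$ and rearranging yields
\[
0 > \Big(\frac{\Le}{2\Lc}-\Ld-1\Big)\sigma\sqrt{\log m},
\]
which is absurd provided $\Le$ is chosen large enough, e.g.\ $\Le:=2\Lc(\Ld+2)$. This forces the greedy procedure to stop after at most $m-1$ selections, completing the proof.

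\textbf{Main obstacle.} There is really no conceptual obstacle, only a balance of constants: the radius $a=\Le\sigma/2$ of the $C_l$'s must be large enough that the Sudakov gain $a\sqrt{\log m}/\Lc$ dominates the concentration loss $\Ld\sigma\sqrt{\log m}$, yet the hypothesis $b\sqrt{\log m}\le\sigma$ must still guarantee $a\sqrt{\log m}\le a^2/b$ so that the Sudakov term, not the $\ell^\infty$-term, drives the minoration. The quantitative assumption $b\sqrt{\log m}\le\sigma$ is tailored precisely for both requirements to hold simultaneously.
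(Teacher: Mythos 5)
Your proof is correct and follows essentially the same route as the paper: a greedy removal of balls of radius $a=\Le\sigma/2$ combined with Proposition \ref{prop:min+conc}, with the same balance of constants $\Le=2\Lc(\Ld+2)$ and the same use of $b\sqrt{\log m}\le\sigma\le a$ to ensure the Sudakov term $a\sqrt{\log m}$ realizes the minimum. The only (cosmetic) difference is that the paper chooses each $t_k$ as a near-maximizer of $t\mapsto b(T_k\cap B(t,\sigma))$ and concludes directly for an arbitrary $D$ in the remainder, whereas you select the centers inside violating sets and argue by contradiction via a stopping rule.
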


\begin{proof}
Let $\Le=\max\{2,2\Lc(\Ld+2)\}$ and $a=\frac{1}{2}\Le\sigma$. Then
\[
\min\Big\{a\sqrt{\log m},\frac{a^{2}}{b}\Big\}=a\sqrt{\log m}\geq \Lc(\Ld+2)\sigma\sqrt{\log m}.
\]
If $T\subset \bigcup_{i\leq m-1}B(t_i,a)$ 
for some $t_1,\ldots,t_{m-1}\in T$  there is nothing to prove, otherwise 
we choose inductively vectors $t_1,t_2,\ldots,t_{m-1}$. To this end we set $T_1:=T$ and 
$T_k:=T\setminus\bigcup_{l<k}B(t_l,a)$ for $k>1$ and choose $t_k\in T_k$ in such a way
that
\[
b(T_k\cap B(t_k,\sigma))\geq \sup_{t\in T_k}b(T_k\cap B(t,\sigma))-\sigma\sqrt{\log m}.
\]
Let $C_k:=T\cap B(t_k,a)$ for $k\leq m-1$. Then obviously $\Delta_{\ell^2(I)}(C_k)\leq \Le\sigma$.
Take any $D\subset T_m=T\setminus\bigcup_{k< m}C_k$ with  $\Delta_{\ell^2(I)}(D)\leq \sigma$
and choose any $t_{m}\in D$ so that $D\subset B(t_{m},\sigma)\cap T_m$. By  construction the condition
\eqref{eq:assSud1} holds. Let $H_l:=B(t_l,\sigma)\cap T_l$, for $l<m$ and $H_m:=D$. Then by the choice of $t_l$
it follows that 
\[
\min_{1\leq l\leq m}b(H_l)\geq b(D)-\sigma\sqrt{\log m}.
\]
So by Proposition \ref{prop:min+conc}
\begin{align*}
b(T)&\geq b\Big(\bigcup_{l\leq m}H_{l}\Big)\geq
\frac{1}{\Lc}\min\Big\{a\sqrt{\log m},\frac{a^{2}}{b}\Big\}+b(D)-(\Ld+1)\sigma\sqrt{\log m}
\\
&\geq b(D)+\sigma\sqrt{\log m}.
\end{align*}
\end{proof}

The last result of this section is a modification of Proposition 1 from \cite{La}, which will be crucial in the proof of
the main decomposition result, Corollary \ref{cor:maindec}.
Before we state it
let us introduce a bit of notation. For $\emptyset\neq J\subset I$, $t\in \ell^{2}(I)$, $T\subset \ell^{2}(I)$ we define
$t_{J}:=(t_{i})_{i\in J}\in \ell^{2}(J)$,
\[
  b_{J}(T):= \Ex \sup_{t\in T}\sum_{i\in J}\ve_{i}t_{i},
\]
\[
  d_{J}(t,s):=\|t_{J}-s_{J}\|_{2},\ \ t,s\in \ell^{2}(I)
\]
and
\[
  B_{J}(t,a):=\{s\in \ell^{2}(I)\colon d_{J}(s,t)\leq a\},\  a\geq 0.
\]

\begin{prop}
\label{prop:prop1}
Consider a positive integer $m$, numbers $b,c,\sigma>0$ and $\lambda\geq 1$ that satisfy
$b\sqrt{\log m}\leq \lambda\sigma$ and $T\subset \ell^{2}(I)$  such that
\begin{equation}
\label{eq:assum1}
\forall_{t,s\in T}\  d_{J}(t,s)\leq c,\ \
\|t-s\|_{\infty}\leq b.
\end{equation}
Then there exist
$t_{1},\ldots,t_{m}\in T$ such that either
$T\subset \bigcup_{l\leq m}B_{I}(t_{l},\sigma)$ or
\begin{equation}
\label{eq:remest}
b_{J}\Big(T\setminus \bigcup_{l\leq m}B_{I}(t_{l},\sigma)\Big)\leq
b_{I}(T)-\Big(\frac{1}{4\lambda\Lb}\sigma-\Lh c\Big)\sqrt{\log m}.
\end{equation}
\end{prop}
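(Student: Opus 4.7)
The plan mirrors the greedy scheme in Corollary \ref{cor:greedy}, but interpolates between the $I$- and $J$-metrics. First I would reduce to the nontrivial regime: since Theorem \ref{th:contr} applied with the contractions $\varphi_i(x)=x\mathbf 1_{i\in J}$ gives $b_J(T)\le b_I(T)$, and since $R\subset T$, the bound $b_J(R)\le b_I(T)$ always holds; hence if $(\sigma/(4\lambda\Lb)-\Lh c)\sqrt{\log m}\le 0$ the conclusion is immediate, and we may assume $c\le \sigma/(4\lambda\Lb\Lh)$, in particular $c\le\sigma/2$. Next, construct $t_1,\dots,t_m\in T$ greedily: set $T_l:=T\setminus\bigcup_{k<l}B_I(t_k,\sigma)$ and pick $t_l\in T_l$ according to a rule to be specified. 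If $T_l=\emptyset$ at some stage, padding gives the first case. Otherwise we obtain $m$ points with $\|t_l-t_{l'}\|_2>\sigma$ and $\|t_l-t_{l'}\|_\infty\le b$.

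The Sudakov half is clean. Because $d_J(t_l,t_{l'})\le c\le\sigma/2$, the projections onto $I\setminus J$ satisfy
\[
\|(t_l-t_{l'})_{I\setminus J}\|_2^2=\|t_l-t_{l'}\|_I^2-d_J(t_l,t_{l'})^2\ge \sigma^2/2,
\]
so the vectors $(t_l-t_1)_{I\setminus J}$ are $\sigma/\sqrt2$-separated in $\ell^2(I\setminus J)$ and $\|\cdot\|_\infty\le b$. Theorem \ref{th:SudBern} applied to them, combined with the hypothesis $b\sqrt{\log m}\le\lambda\sigma$ to lower-bound the minimum by $\sigma\sqrt{\log m}/(4\lambda)$, yields
\[
\Ex\sup_{l\le m}\sum_{i\in I\setminus J}(t_{l,i}-t_{1,i})\ve_i\ \ge\ \frac{\sigma\sqrt{\log m}}{4\lambda\Lb}.
\]
To link this with $b_J(R)$, I introduce the auxiliary vectors $u_{l,r}\in\ell^2(I)$ with $u_{l,r,i}=r_i$ for $i\in J$ and $u_{l,r,i}=t_{l,i}$ for $i\in I\setminus J$. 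Since $(\ve_i)_{i\in J}$ and $(\ve_i)_{i\in I\setminus J}$ are independent, the deterministic identity $\sup_{l,r}(a_l+b_r)=\sup_l a_l+\sup_r b_r$ applied sample-path-wise gives
\[
\Ex\sup_{l,r}X_{u_{l,r}}\ =\ b_J(R)+\Ex\sup_l\sum_{i\in I\setminus J}t_{l,i}\ve_i\ \ge\ b_J(R)+\frac{\sigma\sqrt{\log m}}{4\lambda\Lb}.
\]

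The hard part, and the main obstacle, is the matching upper bound
\[
\Ex\sup_{l,r}X_{u_{l,r}}\ \le\ b_I(T)+\Lh c\sqrt{\log m},
\]
which combined with the previous inequality yields the claim. Although $u_{l,r}\notin T$ in general, one has $u_{l,r}=t_l+(r-t_l)_J$ with $\|u_{l,r}-t_l\|_2\le c$ and $\|u_{l,r}-t_l\|_\infty\le b$, suggesting a contraction-type comparison (Theorem \ref{th:contr} or Corollary \ref{cor:contr}). The naive splitting $X_{u_{l,r}}=X_{t_l}+X^J_{r-t_l}$ followed by a separate supremum is too wasteful: it produces an error of order $b_J(R)$ that exactly cancels the Sudakov gain on the other side. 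The correct argument, following the scheme of Proposition 1 in \cite{La}, refines the greedy so that $t_l$ approximately maximizes $b_J(T_l\cap B_I(t_l,\sigma))$, exploits the symmetrization inequality $b_I(H)\ge b_J(H)$ (a consequence of $\sup(a+b)+\sup(a-b)\ge 2\sup a$ and the $\ve\mapsto-\ve$ symmetry), and applies Proposition \ref{prop:min+conc} with carefully calibrated sub-balls so that the Sudakov contribution dominates the concentration correction while still capturing $b_J(R)$. Assembling these pieces gives the desired inequality.
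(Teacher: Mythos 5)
Your Sudakov half is sound and matches the paper: the centers produced by any greedy covering are $\sigma$-separated in $d_I$ and $c$-close in $d_J$, hence well separated in $d_{I\setminus J}$, and Theorem \ref{th:SudBern} together with $b\sqrt{\log m}\le\lambda\sigma$ gives $\Ex\sup_l Z_{t_l}\ge \sigma\sqrt{\log m}/(4\lambda\Lb)$, where $Z_t=\sum_{i\in I\setminus J}t_i\ve_i$. But the entire content of the proposition lies in the upper bound you label ``the hard part,'' namely that $b_J(R)$ plus this Sudakov gain is at most $b_I(T)+\Lh c\sqrt{\log m}$, and there you have a genuine gap. Your product vectors $u_{l,r}$ only repackage the problem: since $\Ex\sup_{l,r}X_{u_{l,r}}=b_J(R)+\Ex\sup_l Z_{t_l}$ exactly, bounding it by $b_I(T)+\Lh c\sqrt{\log m}$ \emph{is} the assertion to be proved, and no contraction argument can deliver it because the points $u_{l,r}$ do not lie in $T$ and Theorem \ref{th:contr} acts coordinatewise, never coupling the index $l$ (chosen by the $I\setminus J$ variables) with the index $r$ (chosen by the $J$ variables). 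The closing sketch does not repair this: choosing $t_l$ to nearly maximize $b_J(T_l\cap B_I(t_l,\sigma))$ is the rule from Corollary \ref{cor:greedy}, and Proposition \ref{prop:min+conc} lower-bounds $b_I$ of a union by $\min_l b_I(H_l)$ — it never produces the quantity $b_J(R)$ of the \emph{remainder} measured in the \emph{smaller} set of Bernoulli variables, which is what must appear on the left of \eqref{eq:remest}.

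The missing idea is randomization of the centers. The paper introduces $m$ independent copies $(\ve_i^{(k)})_{i\in J}$, $k\le m$, of the Bernoulli variables on $J$, sets $Y_t^{(k)}=\sum_{i\in J}t_i\ve_i^{(k)}$, and uses Proposition \ref{prop:sup_m} to get the one quantity that can be bounded above, $\Ex\max_k\sup_{t\in T}(Z_t+Y_t^{(k)})\le b_I(T)+\La c\sqrt{\log m}$. The greedy center $t_k$ is then chosen as a \emph{random} near-maximizer of $Y^{(k)}_t$ over $T_k=T\setminus\bigcup_{l<k}B_I(t_l,\sigma)$, measurable with respect to $\ve^{(1)},\dots,\ve^{(k)}$ only. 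Because $Y^{(k)}$ is independent of $T_k$, Theorem \ref{th:concBern} shows that $\sup_{t\in T_k}Y_t^{(k)}$ concentrates around its mean, which is at least $\alpha:=\inf b_J(T\setminus\bigcup_l B_I(t_l,\sigma))$, so $\min_k Y^{(k)}_{t_k}\ge\alpha-Lc\sqrt{\log m}$; meanwhile $\max_k Z_{t_k}$ is handled by Sudakov conditionally on the $(\ve^{(k)})$. Writing $\max_k\sup_t(Z_t+Y_t^{(k)})\ge\max_k Z_{t_k}+\min_k Y_{t_k}^{(k)}$ and taking expectations adds $\alpha$ and the Sudakov gain inside a single expression controlled by $b_I(T)+Lc\sqrt{\log m}$, which is exactly what your deterministic coupling cannot do. Without this device (or an equivalent one) your argument does not close.
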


Observe that we use in Proposition \ref{prop:prop1} two distances $d_J$ and $d_I$. What is fundamental here is that
we assume that the diameter of the set $T$ is small only with respect to the smaller distance $d_J$ and we show that it
may be covered by a certain number of balls with respect to the larger distance $d_I$ and a remaining set with a small
value of $b_J$.

\begin{proof}
If $T\subset \bigcup_{l\leq m}B_{I}(t_{l},\sigma)$ for some $t_1,\ldots,t_m\in T$
or $m=1$ there is nothing to prove, so we will assume
that this is not the case. 
We may also choose the universal constant $\Lh$ in such a way that $\Lb\Lh\geq 1$, so it is enough to
consider the case $\sigma\geq 2c$ (since otherwise $\frac{1}{4\lambda\Lb}\sigma-\Lh c<0$). 

Since $b_{J}(T)=b_{J}(T-t)$ for any $t\in \ell^{2}(I)$, we may and will
assume that $0\in T$, so that
\[
\|t_J\|_2\leq c,\ \ \|t\|_{\infty}\leq b\leq \frac{\lambda\sigma}{\sqrt{\log m}} \quad \mbox{ for } t\in T.
\]
We need to show that 
\begin{equation}
\label{eq:toshow}
\alpha< b_{I}(T)-\Big(\frac{1}{4\lambda\Lb}\sigma-\Lh c\Big)\sqrt{\log m},
\end{equation}
where
\[
\alpha:=\inf_{t_{1},\ldots,t_{m}\in T}
b_{J}\Big(T\setminus \bigcup_{l\leq m}B_{I}(t_{l},\sigma)\Big).
\]

Let $\ve_{i}^{(k)}$, $i\in J,\ k=1,\ldots,m$  be independent
Bernoulli r.v's, independent of $(\ve_{i})_{i\in I}$. Let
\[
  Y_{t}^{(k)}:=\sum_{i\in J}t_{i}\ve_{i}^{(k)},\ \
  Z_{t}:=\sum_{i\in I\setminus J}t_{i}\ve_{i}.
\]
Then for any $k$,
\[
b(T)=\Ex\sup_{t\in T}(Z_t+Y_t^{(k)}),
\]
and therefore Proposition \ref{prop:sup_m} yields
\begin{equation}
\label{eq:est1}
\Ex\max_{1\leq k\leq m}\sup_{t\in T}(Z_t+Y_t^{(k)})\leq b(T)+\La c\sqrt{\log m}.
\end{equation}
We set $T_1=T$ and define a random point $t_1\in T_1$ that depends only on $(\ve^{(1)}_i)_{i\in J}$ such that
\[
Y_{t_1}^{(1)}>\sup_{t\in T_1}Y_{t}^{(1)}-c\sqrt{\log m}.
\]
We continue this construction and inductively define random points $t_k\in T$, 
$k\leq m$ that depend only on $(\ve^{(l)}_i)_{l\leq k, i\in J}$. If $t_1,\ldots,t_{k-1}$ are already defined
we set
\[
T_k:=T\setminus \bigcup_{l\leq k-1}B_I(t_l,\sigma)
\]
and we choose a random point $t_k\in T_{k}$ such that
\[
Y_{t_k}^{(k)}> \sup_{t\in T_{k}}Y_{t}^{(k)}-c\sqrt{\log m}.
\]
The process $(Y_t^{(k)})$ is independent of the set $T_{k}$ and for $k\leq m$,
\[
Y_{t_k}^{(k)}+c\sqrt{\log m}> \sup_{t\in T_k}Y_{t}^{(k)}\quad \mbox{and}\quad
\Ex\sup_{t\in T_k}Y_{t}^{(k)}\geq \alpha.
\]
We have
\begin{align}
\notag
\Ex&\max_{1\leq k\leq m}\sup_{t\in T}(Z_t+Y_t^{(k)})
\geq 
\Ex\Big(\max_{1\leq k\leq m}Z_{t_k}+\min_{1\leq k\leq m}Y_{t_k}^{(k)}\Big)
\\
\notag
&\geq
\Ex\max_{1\leq k\leq m}Z_{t_k}+\alpha-c\sqrt{\log m}+
\Ex\min_{1\leq k\leq m}\Big(\sup_{t\in T_k}Y_t^{(k)}-\alpha\Big)
\\
\label{eq:est2}
&\geq \Ex\max_{1\leq k\leq m}Z_{t_k}+\alpha-c\sqrt{\log m}+
\Ex\min_{1\leq k\leq m}\Big(\sup_{t\in T_k}Y_t^{(k)}-\Ex\sup_{t\in T_k}Y_t^{(k)}\Big).
\end{align}
Observe that  for $1\leq l<k\leq m$,
\[
d_{I\setminus J}(t_k,t_l)\geq d_I(t_k,t_l)-d_J(t_k,t_l)\geq \sigma -c\geq \frac{1}{2}\sigma,  
\]
and hence Theorem \ref{th:SudBern} with $a=\sigma/2$ (and using independence  of $Z_t$ and of the random points $(t_k)$) gives
\begin{equation}
\label{eq:est3}
\Ex\max_{1\leq k\leq m}Z_{t_k}\geq \frac{1}{4\lambda \Lb}\sigma\sqrt{\log m}.
\end{equation}

Since $(Y_t^{(k)})$ is independent on the set $T_k$, Theorem \ref{th:concBern} gives that for $u>0$,
\[
\Pr\Big(\sup_{t\in T_k}Y_{t}^{(k)}-\Ex\sup_{t\in T_k}Y_t^{(k)}\leq -u\Big)
\leq 2\exp\Big(-\frac{u^{2}}{\Laa c^{2}}\Big),
\]
so that 
\[
\Pr\Big(\min_{k\leq m}\Big(\sup_{t\in T_k}Y_t^{(k)}-\Ex\sup_{t\in T_k}Y_t^{(k)}\Big)\leq -u\Big)
\leq \min\Big\{1,2m\exp\Big(-\frac{u^{2}}{\Laa c^{2}}\Big)\Big\},
\]
and integration by parts yields
\begin{equation}
\label{eq:est4}
\Ex\min_{k\leq m}\Big(\sup_{t\in T_k}Y_t^{(k)}-\Ex\sup_{t\in T_k}Y_t^{(k)}\Big)
\geq -Lc\sqrt{\log m}.
\end{equation}
Estimates \eqref{eq:est1}-\eqref{eq:est4} imply \eqref{eq:toshow} and complete the proof.
\end{proof}

\section{Partitions}
\label{sec:part}

Following Talagrand we connect in this section decompositions of the set $T$ with 
suitable sequences of its partitions. We recall that an increasing sequence 
$({\cal A}_n)_{n\geq 0}$ of partitions of $T$ is called 
\emph{admissible} if ${\cal A}_0=\{T\}$ and $|{\cal A}_n|\leq N_n:=2^{2^n}$. For such partitions and 
$t\in T$ by $A_n(t)$ we denote by ${\cal A}_n$ the unique set which contains $t$. To each set $A\in {\cal A}_n$
we will associate a point $\pi_n(A)$ and an integer $j_n(A)$. To simplify the notation we set 
$j_n(t):=j_n(A_n(t))$ and $\pi_n(t):=\pi_n(A_n(t))$. The main new feature in the next theorem is the introduction 
of the sets $I_n(A)$.

\begin{thm}
\label{th:part}
Suppose that $M>0$, $r\geq 2$, $({\cal A}_n)_{n\geq 0}$ is an admissible sequence of partitions of $T\subset \ell^2(I)$,  
and for each  $A\in {\cal A}_n$ there exists an integer $j_n(A)$ and a point $\pi_n(A)\in T$ satisfying the 
following assumptions:\\
i) $\|t-s\|_2\leq \sqrt{M}r^{-j_0(T)}$ for $t,s\in T$,\\
ii) if $n\geq 1$, ${\cal A}_n\ni A\subset A'\in{\cal A}_{n-1}$ then either\\ 
a) $j_n(A)=j_{n-1}(A')$ and $\pi_n(A)=\pi_{n-1}(A')$\\
or\\
b) $j_{n}(A)>j_{n-1}(A')$, $\pi_n(A)\in A'$ and  
\[
\sum_{i\in I_n(A)}\min\{(t_i-\pi_n(A)_i)^2,r^{-2j_n(A)}\}\leq M2^nr^{-2j_n(A)}\mbox{ for all }t\in A,
\]
where for any $t\in A$,
\[
I_n(A)=I_n(t):=\big\{i\in I\colon\ |\pi_{k+1}(t)_i-\pi_k(t)_i|\leq r^{-j_k(t)} 
\mbox{ for } 0\leq k\leq n-1\big\}.
\]
Then there exist sets $T_1,T_2$ such that $T\subset T_1+T_2$ and
\begin{equation}
\label{eq:estdec}
\sup_{t^1\in T_1}\|t^1\|_1\leq LM\sup_{t\in T}\sum_{n=0}^{\infty}2^nr^{-j_n(t)}\ \ \mbox{ and }
\ \ \gamma_2(T_2)\leq L\sqrt{M}\sup_{t\in T}\sum_{n=0}^{\infty}2^nr^{-j_n(t)}.
\end{equation}
\end{thm}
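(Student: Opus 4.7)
I would construct $T_1, T_2$ by a coordinate-wise chaining along the chains $(\pi_n(t))_{n \geq 0}$ attached to the partition structure. For each $t \in T$ define
\[
v_n(t)_i := (\pi_{n+1}(t)_i - \pi_n(t)_i)\,\mathbf{1}_{\{i \in I_{n+1}(t)\}},\qquad t^2 := \pi_0(t) + \sum_{n \geq 0} v_n(t),\qquad t^1 := t - t^2,
\]
and set $T_l := \{t^l : t \in T\}$. Since $\pi_0(T)$ is a single point this shift is harmless and one has $T \subset T_1 + T_2$; we may assume the right-hand side of \eqref{eq:estdec} is finite, which ensures absolute convergence of the series defining $t^2$.

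The technical workhorse is the bound
\[
\|v_n(t)\|_2^2 \;=\; \sum_{i \in I_{n+1}(t)} (\pi_{n+1}(t)_i - \pi_n(t)_i)^2 \;\leq\; LM\,2^n r^{-2 j_n(t)}.
\]
When case (a) holds at level $n+1$ the increment vanishes, so assume case (b). Let $n^* \leq n$ be the most recent level at which case (b) occurred, so that $\pi_{n^*}(t) = \pi_n(t)$, $j_{n^*}(t) = j_n(t)$ and $I_{n^*}(t) \supset I_{n+1}(t)$. Applying condition (ii)(b) at $A_{n^*}(t)$ with $s = \pi_{n+1}(t) \in A_{n^*}(t)$, and noting that on $I_{n+1}(t)$ one has $|\pi_{n+1}(t)_i - \pi_n(t)_i| \leq r^{-j_n(t)}$ so the $\min$ in (ii)(b) equals the squared increment itself, yields the displayed bound.

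The $\gamma_2$-estimate is then routine. The partition $\mathcal{B}_n := \{\{t^2 : t \in A\} : A \in \mathcal{A}_n\}$ of $T_2$ is admissible, and for $t, s$ lying in a common $A \in \mathcal{A}_n$ one has $v_k(t) = v_k(s)$ for every $k < n$, since $\pi_k, \pi_{k+1}, j_k, I_{k+1}$ depend only on the common ancestor cells up to level $n$. Consequently
\[
\|t^2 - s^2\|_2 \;\leq\; \sum_{k \geq n}\bigl(\|v_k(t)\|_2 + \|v_k(s)\|_2\bigr) \;\lesssim\; \sqrt M \sum_{k \geq n} 2^{k/2} r^{-j_k(t)};
\]
multiplying by $2^{n/2}$, summing over $n$ and swapping the order of summation yields $\gamma_2(T_2) \lesssim \sqrt M \sup_t \sum_n 2^n r^{-j_n(t)}$.

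The main obstacle is the $\ell^1$-bound on $t^1$. Splitting coordinates by the stopping level $K(i,t) := \sup\{k : i \in I_k(t)\}$, a telescoping computation gives $t^1_i = t_i - \pi_{K(i,t)}(t)_i$ on $\{K(i,t) < \infty\}$. Condition (ii)(b) applied with $s = \pi_{n+1}(t)$ at the corresponding level $n^*(n)$ furnishes the count $|\{i : K(i,t) = n\}| \leq LM\,2^n$, because each excess increment contributes $r^{-2 j_n(t)}$ to a sum bounded by $M\,2^n r^{-2 j_n(t)}$. The matching per-coordinate bound $|t^1_i| \lesssim r^{-j_n(t)}$ on this set is the subtle point: a direct application of (ii)(b) with $s = t$ controls only $\min\{(t_i - \pi_n(t)_i)^2, r^{-2 j_n(t)}\}$, which carries no information on $|t_i - \pi_n(t)_i|$ once $M\,2^n \geq 1$. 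To surmount this I would replace $v_n$ by the truncated variant $\tilde v_n(t)_i := \mathrm{sign}(\pi_{n+1}(t)_i - \pi_n(t)_i)\min(|\pi_{n+1}(t)_i - \pi_n(t)_i|, r^{-j_n(t)})$ and redistribute the coordinate-wise overflow across successive levels; the overflow then telescopes to $|t^1_i| \lesssim r^{-j_{K(i,t)}(t)}$, and combined with the count bound this produces $\|t^1\|_1 \lesssim M \sum_n 2^n r^{-j_n(t)}$, as required by \eqref{eq:estdec}.
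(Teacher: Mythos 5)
Your decomposition is in fact the paper's: setting $m(t,i):=\inf\{n\geq 0:\ |\pi_{n+1}(t)_i-\pi_n(t)_i|>r^{-j_n(t)}\}$, your series for $t^2$ telescopes coordinatewise to $\pi_{m(t,i)}(t)_i$, and your increment bound $\|v_n(t)\|_2^2\leq M2^nr^{-2j_n(t)}$ (via ii)(b) at the level $n^*$ where $j$ last jumped, or via i) if it never did) is exactly the estimate the paper uses. The $\gamma_2$ half is essentially sound, with one caveat: the interchange of summations is legitimate only if you measure distances to the nets $U_n:=\{\pi_0(t)+\sum_{k<n}v_k(t):\,t\in T\}$, so that the tail $\sum_{k\geq n}2^{k/2}r^{-j_k(t)}$ is indexed by a single $t$, and then invoke the standard fact that $\gamma_2(T_2)\leq L\sup_t\sum_n2^{n/2}\dist(t^2,U_n)$; with diameters of the cells $\mathcal B_n$ a supremum over $s\in A_n(t)$ sits inside the sum over $n$ (since $j_k(s)\neq j_k(t)$ for $k>n$) and the swap as written does not go through.

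The genuine gap is where you located it, in the $\ell^1$ bound, and the proposed repair does not close it. Truncating the increments and redistributing the overflow only perturbs $t^2_i$ by the truncated tail $\sum_{k\geq m(t,i)}r^{-j_k(t)}$, whereas the quantity you must control, $t_i-\pi_{m(t,i)}(t)_i$, admits no coordinatewise bound of order $r^{-j_{m(t,i)}(t)}$: hypothesis ii)(b) caps each summand at $r^{-2j_n(t)}$, so once $M2^n\geq 1$ it gives no information on an individual $|t_i-\pi_n(t)_i|$, which can be as large as the diameter $\sqrt M\,r^{-j_0(T)}$. Your accounting also leaves out the (infinitely many) coordinates with $K(i,t)=\infty$, for which $t^1_i=t_i-\lim_n\pi_n(t)_i$ need not vanish. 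The paper's resolution is a second stopping time, $\tau(t,i):=\inf\{n:\ |t_i-\pi_n(t)_i|>\frac12 r^{-j_n(t)}\}$, and the $\ell^1$ sum is organized by the value of $\tau$ rather than of $m$. This buys all three missing pieces at once: if $\tau(t,i)=\infty$ then $\pi_n(t)_i\to t_i$ and $t^1_i=0$; if $\tau(t,i)=n\geq 1$ then $|t_i-\pi_{n-1}(t)_i|\leq\frac12 r^{-j_{n-1}(t)}$ while the surviving chain increments beyond level $n-1$ are nonzero only when $j$ jumps and hence add at most $2r^{-j_{n-1}(t)}$, giving $|t^1_i|\leq 3r^{-j_{n-1}(t)}$; and $|\{i:\tau(t,i)=n\}|\leq 9M2^{n-1}$ follows from two applications of ii)(b), one at level $n$ with $s=t$ (for the coordinates still in $I_n(t)$, where $|t_i-\pi_n(t)_i|>\frac12 r^{-j_n(t)}$ forces the minimum to be at least $\frac14r^{-2j_n(t)}$) and one at the level where the value $j_{n-1}(t)$ was first attained, with $s=\pi_n(t)$, for the coordinates with $m(t,i)=n-1$. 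Without some device of this kind the cardinality bound and the per-coordinate bound live at incompatible levels and the sum $\sum_i|t^1_i|$ does not close.
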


\noindent
{\bf Remark.} Note that if $t,s\in A\in {\cal A}_n$ then for $0\leq k\leq n$, $A_k(t)=A_k(s)$ and as a consequence
$j_k(t)=j_k(s)$, $\pi_k(t)=\pi_k(s)$ and $I_n(t)=I_n(s)$. 
Therefore the definition of $I_n(A)$ does not depend on the choice of $t\in A$.  

\begin{proof}
Obviously we may assume that $\sup_{t\in T}\sum_{n\geq 0}2^nr^{-j_n(t)}<\infty$, which in particular implies that 
$\lim_{n\rightarrow\infty}j_n(t)=\infty$. Define 
\[
m(t,i):=\inf\big\{n\geq 0\colon\ |\pi_{n+1}(t)_i-\pi_{n}(t)_i|>r^{-j_n(t)}\big\}, \quad t\in T, i\in I,
\]
so that  $I_n(t)=\{i\colon\ m(t,i)\geq n\}$ for $n\geq 0$.

Observe that 
\begin{equation}
\label{eq:estpi1}
|\pi_{n+1}(t)_i-\pi_n(t)_i|\leq r^{-j_n(t)}I_{\{j_{n+1}(t)>j_n(t)\}}
\quad \mbox{for } 0\leq n<m(t,i). 
\end{equation}
Since $j_n(t)$ is nondecreasing sequence of integers, for $i$ such that $m(t,i)=\infty$ the limit 
$\pi_{\infty}(t)_i:=\lim_{n\rightarrow\infty}\pi_n(t)_i$ exists. Therefore we may define $\pi(t)$ by the formula
\[
\pi(t)_i:=\pi_{m(t,i)}(t)_i,\quad t\in T,i\in I.
\]
We set
\[
T_1:=\{t-\pi(t)\colon\ t\in T\}\quad \mbox{and} \quad T_2:=\{\pi(t)\colon\ t\in T\},
\] 
so that obviously $T\subset T_1+T_2$.

To estimate $\|t-\pi(t)\|_1$ we define
\[
\tau(t,i):=\inf\big\{n\geq 0\colon\ |\pi_n(t)_i-t_i|>\frac{1}{2}r^{-j_n(t)}\big\}, \quad t\in T, i\in I
\]
and
\[
J_n(t):=\{i\in I\colon\ \tau(t,i)=n\}.
\]
Observe that $\tau(t,i)\leq m(t,i)+1$ and if $\tau(t,i)=\infty$ then $\pi(t)_i=\pi_{\infty}(t)_i=t_i$. Therefore we have
\[
\|t-\pi(t)\|_1=\sum_{n=0}^{\infty}\sum_{i\in J_n(t)}|t_i-\pi_{m(t,i)}(t)_i|.
\]

From  \eqref{eq:estpi1} we get
\[
|\pi_0(t)_i-\pi_{m(t,i)}(t)_i|\leq \sum_{n=0}^{m(t,i)-1}|\pi_{n+1}(t)_i-\pi_n(t)_i|\leq
\sum_{j=j_0(t)}^\infty r^{-j}\leq 2r^{-j_0(t)},
\]
and moreover for $i\in J_0(t)$, it holds that  $|t_i-\pi_0(t)_i|\geq \frac{1}{2}r^{-j_0(t)}$. Thus
\begin{align*}
\sum_{i\in J_0(t)}|t_i-\pi_{m(t,i)}(t)_i|
&\leq 5\sum_{i\in J_0(t)}|t_i-\pi_0(t)_i|\leq
10r^{j_0(t)}\sum_{i\in I}|t_i-\pi_0(t)_i|^2
\\
&\leq 10Mr^{-j_0(t)},
\end{align*}
where the last estimate follows by the assumption i).

If $i\in J_n(t)$, $n\geq 1$ then $m(t,i)\geq n-1$ and
\begin{align*}
|t_i-\pi_{m(t,i)}(t)_i|
&\leq |t_i-\pi_{n-1}(t)_i|+\sum_{k=n-1}^{m(t,i)-1}|\pi_{k+1}(t)_i-\pi_{k}(t)_i|
\\
&\leq \frac{1}{2}r^{-j_{n-1}(t)}+\sum_{k=n-1}^{\infty}r^{-j_{k}(t)}I_{\{j_{k+1}(t)>j_{k}(t)\}}
\\
&\leq \frac{1}{2}r^{-j_{n-1}(t)}+\sum_{l=j_{n-1}(t)}^{\infty}r^{-l}\leq 3r^{-j_{n-1}(t)}.
\end{align*}
Hence
\[
\|t-\pi(t)\|_1\leq 10Mr^{-j_0(t)}+3\sum_{n=1}^{\infty}r^{-j_{n-1}(t)}|J_n(t)|.
\]

To estimate $|J_n(t)|$ for $n\geq 1$ we may assume that $j_{n}(t)>j_{n-1}(t)$, 
since otherwise assumption ii)a) yields $\pi_{n}(t)=\pi_{n-1}(t)$ and $|J_n(t)|=0$.
For $i\in J_n(t)$ we have either $i\in I_n(t)$ or $m(t,i)=n-1$.
Since $|\pi_n(t)_i-t_i|> \frac{1}{2}r^{-j_{n}(t)}$ for $i\in J_n(t)$ we get by the assumption ii)b)
\[
\frac{1}{4}r^{-2j_{n}(t)}|J_n(t)\cap I_n(t)|\leq 
\sum_{i\in I_{n}(t)}\min\{|t_i-\pi_n(t)_i|^2,r^{-2j_{n}(t)}\}\leq M2^nr^{-2j_{n}(t)}.
\]
If $m(t,i)=n-1$ then $|\pi_n(t)-\pi_{n-1}(t)|> r^{-j_{n-1}(t)}$. Let $n':=\inf\{k\leq n-1\colon j_k(t)=j_{n-1}(t)\}$.
Then, since $\pi_{n}(t)\in A_{n-1}(t)\subset A_{n'}(t)$, $j_{n-1}(t)=j_{n'}(t)>j_{n'-1}(t)$ and $\pi_{n-1}(t)=\pi_{n'}(t)$, 
the assumption ii)b) used this time for $n'$  yields
\begin{align*}
r^{-2j_{n-1}(t)}|\{i\colon\ m(t,i)=n-1\}|
&\leq  \sum_{i\in I_{n'}(t)}\min\{|\pi_n(t)_i-\pi_{n-1}(t)_i|^2,r^{-2j_{n-1}(t)}\}
\\
&\leq M2^{n-1}r^{-2j_{n-1}(t)}.
\end{align*}
Thus
\[
|J_n(t)|\leq |J_n(t)\cap I_n(t)|+|\{i\colon\ m(t,i)=n-1\}|\leq 9M2^{n-1}
\]
and
\[
\|t-\pi(t)\|_1\leq 10Mr^{-j_0(t)}+27M\sum_{n=1}^{\infty}2^{n-1}r^{-j_{n-1}(t)}
\leq 37M\sup_{t\in T}\sum_{n=0}^{\infty}2^{n}r^{-j_{n}(t)}.
\]

To bound   $\gamma_2(T_2)$ we will construct sets $U_n\subset \ell^2(I)$ 
such that $|U_0|=1$, $|U_n|\leq N_{n}$ for $n\geq 1$ and use \cite[Theorem 1.3.5]{Tab1}
to get
\begin{equation}
\label{eq:estT2}
\gamma_2(T_2)\leq L\sup_{t\in T}\sum_{n=0}^{\infty}2^{n/2}\dist(\pi(t),U_n).
\end{equation}

To this end we define
\[
U_n:=\{\pi_{m(t,i)\wedge n}(t)\colon\ t\in T\},
\]
where $\pi_{m(t,i)\wedge n}(t)=(\pi_{m(t,i)\wedge n}(t)_i)_{i\in I}$.   
Observe that for $s\in A_{n}(t)$, $\pi_k(s)=\pi_{k}(t)$ for $k\leq n$ and 
$\{i\colon\ m(t,i)\geq n\}=\{i\colon\ m(s,i)\geq n\}$ so that $m(t,i)\wedge n=m(s,i)\wedge n$. Hence 
$|U_n|\leq |\cala_{n}|\leq N_{n}$ for $n\geq 1$ and $U_0=\{\pi_0(T)\}$.

To estimate $\dist(\pi(t),U_n)$, first notice that
\[
\dist(\pi(t),U_n)\leq \|\pi(t)-\pi_{m(t,i)\wedge n}(t)\|_2
\leq \sum_{l=n}^{\infty}\|(\pi_{l+1}(t)-\pi_{l}(t))1_{\{m(t,i)\geq l+1\}}\|_2.
\]
The condition $m(t,i)\geq l+1$ implies $|\pi_{l+1}(t)_i-\pi_{l}(t)_i|\leq r^{-j_l(t)}$.
If $j_{l+1}(t)=j_l(t)$ then $\pi_{l+1}(t)=\pi_{l}(t)$, otherwise $\pi_{l+1}(t)\in A_l(t)$ and by the assumption ii)b)
\begin{align*}
\|(\pi_{l+1}(t)-\pi_{l}(t))1_{\{m(t,i)\geq l+1\}}\|_2^2
&\leq \sum_{i\in I_{l+1}(t)}\min\{|\pi_{l+1}(t)_i-\pi_{l}(t)_i|^2,r^{-2j_l(t)}\}
\\
&\leq M2^lr^{-2j_l(t)}.
\end{align*}
Therefore 
\[
\dist(\pi(t),U_n)\leq \sum_{l=n}^{\infty}\sqrt{M}2^{l/2}r^{-j_l(t)}
\]
and
\[
\sum_{n=0}^{\infty}2^{n/2}\dist(\pi(t),U_n)\leq \sqrt{M}\sum_{l=0}^{\infty}2^{l/2}r^{-j_l(t)}\sum_{n=0}^l2^{n/2}
\leq L\sqrt{M}\sum_{l=0}^{\infty}2^{l}r^{-j_l(t)}.
\]
Hence the estimate for $\gamma_2(T_2)$ follows by \eqref{eq:estT2}.
\end{proof}

\section{Chopping maps}
\label{sec:chop}

In this section on the base of the so-called chopping maps we define functionals that will play a key role
in the proof of Theorem \ref{th:BC}. Chopping maps were introduced by Talagrand in \cite{Ta_infdiv},
he used them to prove a weak form of the Bernoulli Conjecture (\cite{TaGAFA} and \cite[Section 4.1]{Tab1}). 

For $u<v$ we define the non-increasing function $\varphi_{u,v}$ by the formula
\[
\varphi_{u,v}(x):=\min\{v,\max\{x,u\}\}-\min\{v,\max\{0,u\}\}.
\]
In other words $\varphi_{u,v}$ is the unique continuous function, which  is constant on half lines $(-\infty,u]$ and 
$[v,\infty)$, has slope 1 on the interval $[u,v]$ and takes value $0$ at $0$. Observe that 
$|\varphi_{u,v}(x)|\leq v-u$, $|\varphi_{u,v}(x)-\varphi_{u,v}(y)|\leq |x-y|$ and
\begin{equation}
\label{eq:chopp1}
\varphi_{u_0,u_k}(x)=\sum_{l=1}^k\varphi_{u_{l-1},u_l}(x)\quad \mbox{ for }
u_0<u_1<\ldots<u_k.
\end{equation}

\begin{lem}
For any $u_0<u_1<\ldots<u_k$ and $x,y\in \er$ we have
\begin{equation}
\label{eq:chopp2}
\sum_{l=1}^k|\varphi_{u_{l-1},u_l}(x)-\varphi_{u_{l-1},u_l}(y)|=
|\varphi_{u_0,u_k}(x)-\varphi_{u_0,u_k}(y)|\leq |x-y|.
\end{equation}
In particular
\begin{equation}
\label{eq:chopp3}
\sum_{l=1}^k|\varphi_{u_{l-1},u_l}(x)|\leq |x|\quad \mbox{ and }\quad
\sum_{l=1}^k\varphi_{u_{l-1},u_l}(x)^2\leq x^2.
\end{equation}
\end{lem}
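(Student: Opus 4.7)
The key observation is that each function $\varphi_{u_{l-1},u_l}$ is monotone (non-decreasing): for $u<v$, both $x\mapsto\max\{x,u\}$ and $x\mapsto \min\{v,x\}$ are non-decreasing, so their composition is. Consequently, for any fixed $x,y\in\er$, all the differences $\varphi_{u_{l-1},u_l}(x)-\varphi_{u_{l-1},u_l}(y)$ share a common sign, namely the sign of $x-y$.

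The plan is therefore as follows. First I would invoke the telescoping identity \eqref{eq:chopp1}, rewritten as
\[
\varphi_{u_0,u_k}(x)-\varphi_{u_0,u_k}(y)=\sum_{l=1}^k\bigl(\varphi_{u_{l-1},u_l}(x)-\varphi_{u_{l-1},u_l}(y)\bigr),
\]
and then use the common-sign observation above to conclude that the triangle inequality holds with equality on the right-hand side, yielding
\[
\sum_{l=1}^k\bigl|\varphi_{u_{l-1},u_l}(x)-\varphi_{u_{l-1},u_l}(y)\bigr|=\bigl|\varphi_{u_0,u_k}(x)-\varphi_{u_0,u_k}(y)\bigr|.
\]
The final inequality $|\varphi_{u_0,u_k}(x)-\varphi_{u_0,u_k}(y)|\leq |x-y|$ is just the 1-Lipschitz (contraction) property of $\varphi_{u_0,u_k}$ already recorded in the text right before the lemma, so no additional work is required.

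For the ``in particular'' part I would set $y=0$ in \eqref{eq:chopp2}. Since $\varphi_{u,v}(0)=0$ directly from the defining formula, this gives $\sum_{l=1}^k |\varphi_{u_{l-1},u_l}(x)|\leq |x|$. The sum-of-squares bound then follows from the elementary inequality $\sum_l a_l^2\leq \bigl(\sum_l |a_l|\bigr)^2$, applied with $a_l=\varphi_{u_{l-1},u_l}(x)$.

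There is no real obstacle here; the only point to be careful about is to justify that the summands $\varphi_{u_{l-1},u_l}(x)-\varphi_{u_{l-1},u_l}(y)$ have a common sign, which is why I would separate out the monotonicity statement at the very start.
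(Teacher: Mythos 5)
Your proposal is correct and follows essentially the same route as the paper: the authors also reduce to $x>y$, use the monotonicity of each $\varphi_{u,v}$ together with the telescoping identity \eqref{eq:chopp1} to get equality in the triangle inequality, and obtain the ``in particular'' part by taking $y=0$. Your explicit mention of $\sum_l a_l^2\leq\bigl(\sum_l|a_l|\bigr)^2$ for the sum-of-squares bound is a harmless elaboration of what the paper leaves implicit.
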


\begin{proof}
W.l.o.g. we may assume that $x>y$. Then $\varphi_{u,v}(x)\geq \varphi_{u,v}(y)$ for any $u,v$ and
\eqref{eq:chopp2} follows by \eqref{eq:chopp1}. The ``In particular'' part easily follows taking $y=0$.
\end{proof}

Let $G_i=\{u_{i,0}<u_{i,1}<\ldots<u_{i,k_i}\}$, $i\in I$ be a family of finite subsets of $\er$ and
${\cal G}=(G_i)_{i \in I}$. For $t\in \ell^2(I)$ we define Bernoulli processes
\[
X_{t}(G_i,i):=\sum_{l=1}^{k_i}\varphi_{u_{i,l-1},u_{i,l}}(t_i)\ve_{i,l}
\] 
and
\[
X_{t}({\cal G}):=\sum_{i\in I}X_{t}(G_i,i)=\sum_{i\in I}\sum_{l=1}^{k_i}\varphi_{u_{i,l-1},u_{i,l}}(t_i)\ve_{i,l}.
\]
Note that for $t\in \ell^2(I)$ by \eqref{eq:chopp3} we get 
\[
\sum_{i\in I}\sum_{l=1}^{k_i}|\varphi_{u_{i,l-1},u_{i,l}}(t_i)|^2\leq \sum_{i\in I} t_i^2<\infty
\]
and $X_t({\cal G})$ is well defined. We also consider the canonical distance $d_{{\cal G}}$ associated to the process
$X_t({\cal G})$ given by
\[
d_{{\cal G}}(s,t)^2:=\Ex|X_t({\cal G})-X_s({\cal G})|^2=
\sum_{i\in I}\sum_{l=1}^{k_i}|\varphi_{u_{i,l-1},u_{i,l}}(t_i)-\varphi_{u_{i,l-1},u_{i,l}}(s_i)|^2.
\]

\begin{prop}
\label{prop:functcalG}
i) For any family of finite sets ${\cal G}=(G_i)_{i\in I}$ and $T\subset \ell^2(I)$ we have
\[
\Ex\sup_{t\in T}X_t({\cal G})\leq b(T)=\Ex\sup_{t\in T}\sum_{i\in I}t_i\ve_i.
\]
ii) If ${\cal G}=(G_i)_{i\in I}$ and ${\cal G'}=(G_i')_{i\in I}$ are two families of finite subsets of $\er$ such that
for all $i\in I$,
\begin{equation}
\label{eq:GGprime}
G_i\subset G_i',\ \max_i G_i=\max_i G_i' \mbox{ and } \min_i G_i=\min_i G_i'
\end{equation}
then for any $T\subset \ell^2(I)$,
\[
\Ex\sup_{t\in T}X_t({\cal G}')\leq \Ex\sup_{t\in T}X_t({\cal G}).
\]
\end{prop}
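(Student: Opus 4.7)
The plan is to derive both parts as applications of Corollary~\ref{cor:contr}, exploiting the contraction identities \eqref{eq:chopp2} and \eqref{eq:chopp3}. For part i), set $g_i(x):=x$ and $f_{i,l}(x):=\varphi_{u_{i,l-1},u_{i,l}}(x)$ for $1\leq l\leq k_i$. Then \eqref{eq:chopp2} applied to the chain $u_{i,0}<\cdots<u_{i,k_i}$ yields
\[
\sum_{l=1}^{k_i}|f_{i,l}(x)-f_{i,l}(y)|\leq|x-y|=|g_i(x)-g_i(y)|,
\]
while \eqref{eq:chopp3} supplies the $\ell^2$-integrability of $(g_i(t_i))$ and $(f_{i,l}(t_i))$. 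Corollary~\ref{cor:contr} then gives exactly $\Ex\sup_{t\in T}X_t({\cal G})\leq\Ex\sup_{t\in T}\sum_{i\in I}t_i\ve_i=b(T)$.

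For part ii), the plan is to apply Corollary~\ref{cor:contr} with a composite outer index ranging over pairs $(i,l)$, $i\in I$, $1\leq l\leq k_i$; the set $T$ is regarded as embedded in $\ell^2$ over this larger index set by duplication $\tilde t_{(i,l)}:=t_i$, with $\ell^2$-integrability again guaranteed by \eqref{eq:chopp3}. For each $(i,l)$ the hypothesis \eqref{eq:GGprime} forces $u_{i,l-1},u_{i,l}\in G_i'$, so the points of $G_i'$ inside $[u_{i,l-1},u_{i,l}]$ form a chain $v_{i,l,0}=u_{i,l-1}<\cdots<v_{i,l,p_{i,l}}=u_{i,l}$. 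Set
\[
g_{(i,l)}(x):=\varphi_{u_{i,l-1},u_{i,l}}(x),\qquad f_{(i,l),j}(x):=\varphi_{v_{i,l,j-1},v_{i,l,j}}(x),\ 1\leq j\leq p_{i,l}.
\]
Applying \eqref{eq:chopp2} to this subchain gives
\[
\sum_{j=1}^{p_{i,l}}|f_{(i,l),j}(x)-f_{(i,l),j}(y)|=|\varphi_{u_{i,l-1},u_{i,l}}(x)-\varphi_{u_{i,l-1},u_{i,l}}(y)|=|g_{(i,l)}(x)-g_{(i,l)}(y)|,
\]
verifying the contraction hypothesis of Corollary~\ref{cor:contr} (after harmlessly padding with zero functions to obtain a common inner index set). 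The ensuing inequality reads
\[
\Ex\sup_{t\in T}\sum_{(i,l),j}\varphi_{v_{i,l,j-1},v_{i,l,j}}(t_i)\ve_{(i,l),j}\leq\Ex\sup_{t\in T}\sum_{(i,l)}\varphi_{u_{i,l-1},u_{i,l}}(t_i)\ve_{(i,l)}.
\]
The right-hand side equals $\Ex\sup_t X_t({\cal G})$, while on the left the triples $(i,l,j)$ are in bijection with the indices of the intervals of $G_i'$ via flattening, and since the $\ve_{(i,l),j}$ are fully independent, relabeling identifies the left-hand side with $\Ex\sup_t X_t({\cal G}')$.

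Both parts thus reduce to a mechanical application of Corollary~\ref{cor:contr}; the only real care needed is the bookkeeping in part ii), which amounts to recognizing via \eqref{eq:chopp1} that refining a chopping partition splits each coarse $\varphi$ into a sum of fine $\varphi$'s, so the contraction condition holds with equality and no estimate is lost in the reduction.
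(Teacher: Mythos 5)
Your proof is correct and follows essentially the same route as the paper: part i) is Corollary~\ref{cor:contr} with $g_i(x)=x$ and the contraction bound \eqref{eq:chopp2}, and part ii) is the same corollary applied after rewriting $X_t({\cal G}')$ as a triple sum over the subchains $[u_{i,l-1},u_{i,l}]\cap G_i'$, which is exactly the paper's argument. Your version just spells out the bookkeeping (the bijection of indices and the role of the endpoint equalities in \eqref{eq:GGprime}) that the paper leaves implicit.
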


\begin{proof}
Part i) follows easily by Corollary \ref{cor:contr} and \eqref{eq:chopp2}.

To show part ii) let $G_i=\{u_{i,0}<u_{i,1}<\ldots<u_{i,k_i}\}$
and $[u_{i,l-1},u_{i,l}]\cap G_i'=\{s_{i,l,0}<s_{i,l,1}<\ldots<s_{i,l,k_{i,l}}\}$. Then
\[
\Ex\sup_{t\in T}X_t({\cal G}')=
\Ex\sup_{t\in T}\sum_{i\in I}\sum_{l=1}^{k_i}\sum_{j=1}^{k_{i,l}}\varphi_{s_{i,l,j-1},s_{i,l,j}}(t_i)\ve_{i,l,j}
\]
and the assertion follows by  Corollary \ref{cor:contr} and \eqref{eq:chopp2}.
\end{proof}

Inequality \eqref{eq:chopp3} yields
\begin{equation}
\label{eq:distest}
d_{{\cal G}}(s,t)\leq \|s-t\|_2\quad \mbox{for }s,t\in \ell^2(I).
\end{equation}

The next proposition shows how to compare $d_{{\cal G}}$ with $d_{{\cal G'}}$.

\begin{prop}
\label{prop:distcalG}
Let ${\cal G}=(G_i)_{i\in I}$ and ${\cal G'}=(G_i')_{i\in I}$ be two families of finite subsets of $\er$
such that $G_i\subset G_i'$ and $G_i=\{u_{i,0}<u_{i,1}<\ldots<u_{i,k_i}\}$ for all $i\in I$.\\
i) If $\max_i G_i=\max_i G_i'$  and  $\min_i G_i=\min_i G_i'$ then $d_{{\cal G'}}\leq d_{{\cal G}}$.\\
ii) If $|G_i'\cap (u_{i,l-1},u_{i,l}]|\leq q$ for all $i\in I$, $1\leq l\leq k_i$ then $d_{{\cal G}}\leq \sqrt{q}d_{{\cal G'}}$.
\end{prop}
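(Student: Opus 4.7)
The natural approach is to refine the partition defined by $G_i$ using the extra points of $G_i'$ that fall in each interval $(u_{i,l-1},u_{i,l}]$, and then exploit the identity \eqref{eq:chopp2}. Fix $i\in I$; write the extra points of $G_i'$ lying in $[u_{i,l-1},u_{i,l}]$ as $u_{i,l-1}=s_{i,l,0}<s_{i,l,1}<\ldots<s_{i,l,k_{i,l}}=u_{i,l}$. By \eqref{eq:chopp2} the telescoping relation
\[
|\varphi_{u_{i,l-1},u_{i,l}}(t_i)-\varphi_{u_{i,l-1},u_{i,l}}(s_i)|=\sum_{j=1}^{k_{i,l}}|\varphi_{s_{i,l,j-1},s_{i,l,j}}(t_i)-\varphi_{s_{i,l,j-1},s_{i,l,j}}(s_i)|
\]
holds and, crucially, every summand on the right has the \emph{same sign} (all are nonnegative if $t_i\geq s_i$ and all nonpositive otherwise) since the $\varphi_{u,v}$ are monotone with a common direction of monotonicity. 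This sign-preservation is the whole engine of the proposition.

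For part i), the shared-extremes hypothesis ensures that as $l$ ranges over $1,\ldots,k_i$ the refining points $\{s_{i,l,j}\}$ exhaust $G_i'$ exactly (no points of $G_i'$ lie outside $[\min G_i,\max G_i]$). Since all the $k_{i,l}$ summands above are of one sign, squaring gives $(\sum_j a_j)^2\geq \sum_j a_j^2$, so
\[
|\varphi_{u_{i,l-1},u_{i,l}}(t_i)-\varphi_{u_{i,l-1},u_{i,l}}(s_i)|^2\geq \sum_{j=1}^{k_{i,l}}|\varphi_{s_{i,l,j-1},s_{i,l,j}}(t_i)-\varphi_{s_{i,l,j-1},s_{i,l,j}}(s_i)|^2,
\]
and summing over $l$ and $i$ yields exactly $d_{\mathcal G}(s,t)^2\geq d_{\mathcal G'}(s,t)^2$.

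For part ii), the hypothesis $|G_i'\cap(u_{i,l-1},u_{i,l}]|\leq q$ means $k_{i,l}\leq q$, so Cauchy--Schwarz in the reverse direction gives
\[
|\varphi_{u_{i,l-1},u_{i,l}}(t_i)-\varphi_{u_{i,l-1},u_{i,l}}(s_i)|^2\leq q\sum_{j=1}^{k_{i,l}}|\varphi_{s_{i,l,j-1},s_{i,l,j}}(t_i)-\varphi_{s_{i,l,j-1},s_{i,l,j}}(s_i)|^2.
\]
Summing over $l$ and $i$ produces $d_{\mathcal G}(s,t)^2$ on the left, while on the right we obtain $q$ times a partial sum of the squares defining $d_{\mathcal G'}(s,t)^2$ (we restrict to the intervals of $G_i'$ contained in $[u_{i,0},u_{i,k_i}]$). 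Since the omitted terms are nonnegative, this partial sum is bounded by $d_{\mathcal G'}(s,t)^2$, giving $d_{\mathcal G}\leq \sqrt q\, d_{\mathcal G'}$.

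There is no real obstacle here; the only thing to keep straight is the sign observation that turns \eqref{eq:chopp2} from an $\ell^1$--type identity into the pointwise inequality $(\sum a_j)^2\geq \sum a_j^2$ needed for part i). Part ii) is just a textbook application of Cauchy--Schwarz, with the bookkeeping detail that the extra intervals of $G_i'$ outside $[u_{i,0},u_{i,k_i}]$ only help us because they contribute nonnegative terms to $d_{\mathcal G'}^2$.
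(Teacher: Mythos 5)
Your proof is correct and follows the same route as the paper: refine each interval of $G_i$ by the points of $G_i'$, apply the identity \eqref{eq:chopp2}, and use $\sum_j|a_j|^2\leq(\sum_j|a_j|)^2$ for i) and $(\sum_{j\le k}|a_j|)^2\leq k\sum_{j\le k}|a_j|^2$ for ii). Your explicit sign observation is exactly the mechanism already encoded in \eqref{eq:chopp2} (which is stated with absolute values), so it is a harmless redundancy rather than an extra ingredient.
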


\begin{proof}
Part i) follows by \eqref{eq:chopp2} and the inequality $\sum_{l}|a_l|^2\leq (\sum_{l}|a_l|)^2$. To show
ii) we also use  \eqref{eq:chopp2} and the bound $(\sum_{l=1}^k|a_l|)^2\leq k \sum_{l=1}^k|a_l|^2$.
\end{proof}

We are now ready to define functionals and related distances.
Let $r\geq 4$ be an integer to be chosen later.
For $x\in \er$ and $k\in \zet$ we set
\[
G(x,k):=\{pr^{-k}\colon\  p\in\zet\}\cap [x-4r^{-k},x+4r^{-k}).
\]
In other words if $p_k(x)=\lceil r^kx\rceil \in \zet$, i.e. $(p_k(x)-1)r^{-k}<x\leq p_k(x)r^{-k}$ then
\[
G(x,k)=\{pr^{-k}\colon p_k(x)-4\leq p\leq p_k(x)+3\}.
\]
For an integer $j\geq k$ we set
\begin{align*}
G(x,k,j)&
:=\{pr^{-j}\colon\ (p_k(x)-4)r^{-k}\leq pr^{-j}\leq (p_k(x)+3)r^{-k}\}
\\
&= \{pr^{-j}\colon\ w_{k,j}(x)\leq p\leq v_{k,j}\},  
\end{align*}
where $w_{k,j}(x):=(p_k(x)-4)r^{j-k}$ and $v_{k,j}(x):=(p_k(x)+3)r^{j-k}$.
Then $G(x,k,k)=G(x,k)$ and 
\begin{align}
\notag
j'\geq j\geq k\ \Rightarrow\ & G(x,k,j)\subset G(x,k,j'),\ \min G(x,k,j)=\min G(x,k,j')
\\
\label{eq:compGx}
 &\mbox{ and } \max G(x,k,j)=\max G(x,k,j').
\end{align}

For $u\in \ell^2(I)$, integers $j\geq k$  and $J\subset I$ we define the process $X_t(J,u,k,j)$ by
\[
X_t(J,u,k,j):=X_{t}((G(u_i,k,j))_{i\in J})=
\sum_{i\in J}\sum_{p=w_{k,j}(u_i)+1}^{v_{k,j}(u_i)}\varphi_{(p-1)r^{-j},pr^{-j}}(t_i)\ve_{i,p}.
\]
For $T\subset \ell^2(I)$ we set
\[
F(T,J,u,k,j):=\Ex\sup_{t\in T}X_t(J,u,k,j).
\]

Increasing the parameter $j$ corresponds to the ``adding" new Bernoulli r.v's, while increasing
the parameter $k$ results in ``removing" some of Bernoulli r.v's from the process $X_t(J,u,k,j)$.

Let us denote by $d(J,u,k,j)$ the canonical distance associated to the process $(X_t(J,u,k,j))$, i.e.
\[
d(J,u,k,j)(t,s):=\Big(\Ex (X_t(J,u,k,j)-X_s(J,u,k,j))^2\Big)^{1/2}
\]
and let $\Delta(T,J,u,k,j)$ denote the diameter of the set $T\subset \ell^2(I)$ with respect to $d(J,u,k,j)$.

Proposition \ref{prop:functcalG}i) and \eqref{eq:distest} easily yield the following.

\begin{prop}
For any $J\subset I$, $u\in \ell^2(I)$, integers $j\geq k$ and $T\subset \ell^2(I)$ we have
\[
F(T,J,u,k,j)\leq b(T) 
\] 
and
\[
\Delta(T,J,u,k,j)\leq \Delta_{\ell^2(I)}(T).
\]
\end{prop}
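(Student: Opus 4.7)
The plan is to derive both inequalities as immediate corollaries of Proposition \ref{prop:functcalG}i) and inequality \eqref{eq:distest}, specialized to the family $\mathcal{G}=(G(u_i,k,j))_{i\in J}$ (viewed as a family of finite subsets of $\er$ indexed by $J$ instead of $I$). There is no conceptual difficulty here; the statement is announced as ``easily follows'' and the only thing to track is that the family is indexed by the subset $J$ rather than by the full index set $I$.

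First I would handle the functional inequality. I would observe that $X_t(J,u,k,j)$ coincides, by definition, with the process $X_t(\mathcal{G})$ for $\mathcal{G}=(G(u_i,k,j))_{i\in J}$. Repeating the proof of Proposition \ref{prop:functcalG}i) verbatim on the index set $J$ (i.e., applying Corollary \ref{cor:contr} with $f_{i,p}(x)=\varphi_{(p-1)r^{-j},pr^{-j}}(x)$ and $g_i(x)=x$, whose hypothesis is exactly \eqref{eq:chopp2}), one obtains
\[
F(T,J,u,k,j)=\Ex\sup_{t\in T}X_t(J,u,k,j)\leq \Ex\sup_{t\in T}\sum_{i\in J}t_i\ve_i=b_J(T).
\]
Then \eqref{eq:trivialcomp} gives $b_J(T)\leq b(T)$, and the first inequality follows.

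For the distance inequality, I would note that $d(J,u,k,j)$ is precisely the canonical distance $d_{\mathcal{G}}$ associated to the same family $\mathcal{G}=(G(u_i,k,j))_{i\in J}$. Applying \eqref{eq:distest} on $\ell^2(J)$ (whose proof is a direct consequence of \eqref{eq:chopp2} squared and summed over $i\in J$) gives
\[
d(J,u,k,j)(t,s)^2=\sum_{i\in J}\sum_{p}\bigl|\varphi_{(p-1)r^{-j},pr^{-j}}(t_i)-\varphi_{(p-1)r^{-j},pr^{-j}}(s_i)\bigr|^2\leq \sum_{i\in J}(t_i-s_i)^2\leq \|t-s\|_2^2.
\]
Taking the supremum over $t,s\in T$ yields $\Delta(T,J,u,k,j)\leq \Delta_{\ell^2(I)}(T)$, completing the proof. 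No step poses an obstacle; the only (minor) thing to be careful about is that restricting to $J$ can only decrease both the canonical distance and the expected supremum, which is why the bounds by the full $\ell^2(I)$-quantities remain valid.
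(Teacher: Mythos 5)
Your proposal is correct and follows exactly the route the paper intends: the paper's entire proof is the remark that the statement ``easily follows'' from Proposition \ref{prop:functcalG}i) and \eqref{eq:distest}, and you have simply filled in the (routine) detail that the family $\mathcal{G}=(G(u_i,k,j))_{i\in J}$ is indexed by $J$, so one concludes via $b_J(T)\leq b(T)$ from \eqref{eq:trivialcomp} and the analogous restriction of the $\ell^2$-sum. Nothing further is needed.
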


We also have the following comparison of distinct functionals and related distances.

\begin{prop}
\label{prop:monfunct}
If $J'\subset J\subset I$, integers $j\geq k$ and $j'\geq k'$ satisfy $j'\geq j$ and $k'\geq k$ then
for any $u\in \ell^2(I)$ and $T\subset \ell^2(I)$ we have
\[
F(T,J',u,k',j')\leq F(T,J,u,k,j)
\]
and
\[
\Delta(T,J',u,k',j')\leq \Delta(T,J,u,k,j).
\]
\end{prop}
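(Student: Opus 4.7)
The plan is to prove both inequalities by composing three separate one-parameter monotonicity steps along the chain $(J, k, j)\to (J, k, j')\to (J, k', j')\to (J', k', j')$, at each arrow changing only one of $j$, $k$, $J$. The arguments for $F$ and for $\Delta$ will run in parallel. \emph{For the first step (enlarging $j$ to $j'$ with $J, u, k$ fixed),} property \eqref{eq:compGx} says that $G(u_i, k, j)\subset G(u_i, k, j')$ with matching minimum and maximum for every $i$, so Proposition \ref{prop:functcalG} ii) and Proposition \ref{prop:distcalG} i) immediately give $F(T, J, u, k, j')\leq F(T, J, u, k, j)$ together with the corresponding inequality between the distances.

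\emph{For the second step (enlarging $k$ to $k'$ with $J, u, j'$ fixed, both $\leq j'$),} the key point is the inclusion $G(u_i, k', j')\subset G(u_i, k, j')$ as subsets of $r^{-j'}\zet$. I would verify it by writing $u_i\leq p_k(u_i)r^{-k}<u_i+r^{-k}$ at both levels $k$ and $k'$, which gives the endpoint bounds $(p_{k'}(u_i)-4)r^{-k'}\geq u_i-4r^{-k'}$ and $(p_k(u_i)-4)r^{-k}<u_i-3r^{-k}$, and symmetric ones at the upper end; the inclusion then reduces to $4r^{-k'}\leq 3r^{-k}$, which for $k'>k$ is ensured by $r\geq 4$ (and is trivial for $k'=k$). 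Given the inclusion, the Bernoullis $\ve_{i,p}$ defining $X_t(J, u, k', j')$ form a subset of those defining $X_t(J, u, k, j')$, and the remaining ones are independent of them; conditioning on the extra variables and applying Jensen's inequality yields $F(T, J, u, k', j')\leq F(T, J, u, k, j')$, while the distance inequality is immediate since $d(J,u,k',j')(t,s)^2$ is then a subsum of $d(J,u,k,j')(t,s)^2$.

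\emph{For the third step (shrinking $J$ to $J'$ with $u, k', j'$ fixed),} the argument is the Bernoulli-process analogue of \eqref{eq:trivialcomp}: the variables $\ve_{i,p}$ with $i\in J\setminus J'$ are independent of those defining $X_t(J', u, k', j')$, so conditioning on them and applying Jensen's inequality once more gives the functional bound; the distance inequality is again immediate. Composing these three steps along the chain above then yields the proposition.

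The only non-routine verification is the endpoint inclusion $G(u_i, k', j')\subset G(u_i, k, j')$ in the second step, where the hypothesis $r\geq 4$ is used in an essential way; I expect this to be the main (though still elementary) obstacle. Everything else is a direct invocation of results from Section \ref{sec:chop}.
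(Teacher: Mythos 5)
Your proof is correct and follows essentially the same route as the paper: the paper likewise reduces to three one-parameter monotonicity steps, handling $J$ and $k$ via \eqref{eq:trivialcomp} (your Jensen argument) and $j$ via \eqref{eq:compGx} together with Propositions \ref{prop:functcalG}\,ii) and \ref{prop:distcalG}. Your explicit verification of the inclusion $G(u_i,k',j')\subset G(u_i,k,j')$ is precisely the elementary check the paper leaves to the reader, and your endpoint estimates (using $4r^{-k'}\leq 3r^{-k}$ for $k'>k$, $r\geq 4$) are sound.
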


\begin{proof}
The monotonicity of $F(T,J,u,k,j)$ with respect to the set $J$ and the variable $k$ easily follows by the definition of 
$X_t(J,u,k,j)$ and $\eqref{eq:trivialcomp}$. The monotonicity with respect to $j$ is a consequence of Proposition
\ref{prop:functcalG} ii) and \eqref{eq:compGx}.

Monotonicity of distances $d(T,J,u,k,j)$ with respect to $J$ and $k$ is quite obvious, and with respect to $j$
follows by Proposition \ref{prop:distcalG}.
\end{proof}

We conclude this section with a lemma that gives lower bound for the constructed distances.

\begin{lem}
\label{lem:estd}
For $s,t,u\in \ell^2(I)$, $J\subset I$ and $j\geq k$,
\[
d(J,u,k,j)(t,s)^2\geq \frac{1}{2}\sum_{i\in J}\min\{|s_i-t_i|^2,r^{-2j}\}I_{\{|s_i-u_i|\leq 2r^{-k}\}}. 
\]
\end{lem}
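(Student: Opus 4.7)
The plan is to reduce the inequality to a coordinate-by-coordinate statement and then to a purely geometric calculation about how an interval of length $|s_i-t_i|$ sits in a tiling of length-$r^{-j}$ pieces. By the definition of $d(J,u,k,j)$ the squared distance splits as $\sum_{i\in J}\sum_{p=w_{k,j}(u_i)+1}^{v_{k,j}(u_i)}|\varphi_{(p-1)r^{-j},pr^{-j}}(t_i)-\varphi_{(p-1)r^{-j},pr^{-j}}(s_i)|^2$, and the right-hand side also splits as a sum over $i\in J$; moreover the $i$-th summand on the right vanishes unless $|s_i-u_i|\leq 2r^{-k}$. So I would fix such an $i$ and prove
\[
\sum_{p=w_{k,j}(u_i)+1}^{v_{k,j}(u_i)}|\varphi_{(p-1)r^{-j},pr^{-j}}(t_i)-\varphi_{(p-1)r^{-j},pr^{-j}}(s_i)|^2\geq \tfrac{1}{2}\min\{(s_i-t_i)^2,r^{-2j}\}.
\]

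The key geometric identity is that for any interval $I=[(p-1)r^{-j},pr^{-j}]$ one has $|\varphi_{(p-1)r^{-j},pr^{-j}}(t_i)-\varphi_{(p-1)r^{-j},pr^{-j}}(s_i)|$ equal to the length of $[s_i\wedge t_i,s_i\vee t_i]\cap I$. Since the intervals $I_p=[(p-1)r^{-j},pr^{-j}]$ for $w_{k,j}(u_i)<p\leq v_{k,j}(u_i)$ tile the grid range $G_i:=[(p_k(u_i)-4)r^{-k},(p_k(u_i)+3)r^{-k}]$, the sum becomes $\sum_p a_p^2$ where the $a_p$ are the lengths of the pieces into which $[s_i\wedge t_i,s_i\vee t_i]\cap G_i$ is cut by the grid, each of length at most $r^{-j}$, summing to $L_i':=|[s_i\wedge t_i,s_i\vee t_i]\cap G_i|$.

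Next I would verify that $s_i$ sits at distance at least $r^{-k}$ from each endpoint of $G_i$. This follows because $u_i\in((p_k(u_i)-1)r^{-k},p_k(u_i)r^{-k}]$ by definition of $p_k(u_i)$, and $|s_i-u_i|\leq 2r^{-k}$ forces $s_i\in[(p_k(u_i)-3)r^{-k},(p_k(u_i)+2)r^{-k}]$. Consequently, writing $L=|s_i-t_i|$, the interval $[s_i\wedge t_i,s_i\vee t_i]$ contains $s_i$ and extends at most $L$ in one direction, so $L_i'\geq \min\{L,r^{-k}\}\geq \min\{L,r^{-j}\}$ since $k\leq j$.

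Finally, I would dispose of the inequality $\sum_p a_p^2\geq \tfrac{1}{2}\min\{(L_i')^2,r^{-2j}\}$ by a two-case argument. If $L_i'\leq r^{-j}$ then $[s_i\wedge t_i,s_i\vee t_i]\cap G_i$ is hit by at most two consecutive $I_p$'s (it straddles at most one grid point), and Cauchy--Schwarz with $N\leq 2$ gives $\sum_p a_p^2\geq (L_i')^2/2$. If $L_i'>r^{-j}$ then the number $N$ of pieces satisfies $N\leq L_i'/r^{-j}+1\leq 2L_i'/r^{-j}$, so $\sum_p a_p^2\geq (L_i')^2/N\geq L_i'r^{-j}/2\geq r^{-2j}/2$. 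Combining these with $L_i'\geq \min\{L,r^{-j}\}$ yields the per-coordinate bound, and summing over $i\in J$ completes the proof. The only mildly delicate point is making sure that $s_i$ is indeed at least $r^{-k}$ away from both endpoints of $G_i$ so that the ``$\min\{L,r^{-k}\}$'' survives; once this is in hand, the rest is bookkeeping.
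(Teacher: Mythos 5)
Your proof is correct and follows essentially the same route as the paper's: reduce to a single coordinate with $|s_i-u_i|\leq 2r^{-k}$, interpret the $\varphi$-increments as lengths of the pieces into which the grid cuts $[s_i\wedge t_i,s_i\vee t_i]$, use the position of $s_i$ well inside the grid range, and finish with the $(a+b)^2\leq 2(a^2+b^2)$ trick. The only cosmetic difference is that the paper first truncates to the case $|s_i-t_i|\leq r^{-j}$ (so that at most two cells are involved and no piece-counting is needed), whereas you handle the long-interval case directly; both are fine.
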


\begin{proof}
It is easy to reduce to the case when $|s_i-u_i|\leq 2r^{-k}$ and $|s_i-t_i|\leq r^{-j}$ for all $i\in J$.
Then for any $i\in J$, $\min G(u_i,k,j)\leq s_i\leq t_i\leq \max G(u_i,k,j)$ and for at most two integers $p$,
$\varphi_{(p-1)r^{-j},pr^{-j}}(t_i)\neq \varphi_{(p-1)r^{-j},pr^{-j}}(s_i)$. 
The estimate follows by \eqref{eq:chopp1}, since $(a+b)^2\leq 2a^2+2b^2$. 
\end{proof}

\section{Decomposition Lemmas}
\label{sec:dec}

In this section we derive several decomposition results for our functionals $F(T,J,u,k,j)$.
First two propositions are based on results of Section \ref{sec:est}. We combine them to get Corollary \ref{cor:maindec}
on which we will base our inductive construction of suitable partitions.

The first proposition immediately follows from Corollary \ref{cor:greedy}.

\begin{prop}
\label{prop:funct1}
Let $T\subset \ell^2(I)$, $u\in \ell^2(I)$, $J\subset I$ and $j\geq k$.
If $r^{-j}\sqrt{\log m}\leq \sigma$   then there exist
sets $C_1,\ldots, C_{m-1}\subset T$ such that 
\[
\Delta(C_l,J,u,k,j)\leq \Le\sigma \quad 1\leq l\leq m-1
\]
and for any $\emptyset\neq D\subset T\setminus\bigcup_{l<m}C_l$
with $\Delta(D,J,u,k,j)\leq \sigma$, it holds
\[
F(D,J,u,k,j)\leq F(T,J,u,k,j)-\sigma\sqrt{\log m}.
\]  
\end{prop}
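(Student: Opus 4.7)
The strategy is to recognize $F(T,J,u,k,j)$ as the expected supremum of a standard Bernoulli process indexed by a reparametrization of $T$, and then apply Corollary \ref{cor:greedy} directly to that process. Define the index set $\tilde I := \{(i,p) : i \in J,\ w_{k,j}(u_i) < p \leq v_{k,j}(u_i)\}$ and the map $\psi \colon T \to \ell^2(\tilde I)$ by $\psi(t)_{i,p} := \varphi_{(p-1)r^{-j},pr^{-j}}(t_i)$. Then by construction the Bernoulli process $\sum_{(i,p)\in \tilde I} \psi(t)_{i,p}\,\ve_{i,p}$ coincides with $X_t(J,u,k,j)$, so its canonical $\ell^2$-distance on $\psi(T)$ pulls back to $d(J,u,k,j)$ on $T$, and its expected supremum over $\psi(T)$ equals $F(T,J,u,k,j)$.

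The key observation is that $|\varphi_{(p-1)r^{-j},pr^{-j}}(x)| \leq r^{-j}$ for every $x$, so $\|\psi(t)\|_\infty \leq r^{-j}$ for all $t \in T$. The hypothesis $r^{-j}\sqrt{\log m} \leq \sigma$ is then exactly the hypothesis $b\sqrt{\log m} \leq \sigma$ of Corollary \ref{cor:greedy} with $b := r^{-j}$. Applying that corollary to $\psi(T) \subset \ell^2(\tilde I)$ yields sets $\tilde C_1,\ldots,\tilde C_{m-1} \subset \psi(T)$ of diameter at most $\Le \sigma$ (in the $\ell^2(\tilde I)$-distance) such that every nonempty $\tilde D \subset \psi(T)\setminus\bigcup_l \tilde C_l$ with $\Delta_{\ell^2(\tilde I)}(\tilde D) \leq \sigma$ satisfies $b(\tilde D) \leq b(\psi(T)) - \sigma\sqrt{\log m}$.

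Pull back by setting $C_l := \psi^{-1}(\tilde C_l) \subset T$. Since $d(J,u,k,j)(t,s) = \|\psi(t)-\psi(s)\|_2$, the diameter bound $\Delta(C_l,J,u,k,j) \leq \Le\sigma$ is immediate. For any nonempty $D \subset T\setminus\bigcup_l C_l$ with $\Delta(D,J,u,k,j) \leq \sigma$, we have $\psi(D) \subset \psi(T)\setminus\bigcup_l \tilde C_l$ (since $t \notin \psi^{-1}(\tilde C_l)$ means $\psi(t)\notin \tilde C_l$) and $\Delta_{\ell^2(\tilde I)}(\psi(D))\leq \sigma$, so the conclusion of Corollary \ref{cor:greedy} applied to $\tilde D := \psi(D)$ translates back to $F(D,J,u,k,j) \leq F(T,J,u,k,j) - \sigma\sqrt{\log m}$, which is the required inequality. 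There is no real obstacle here: the only mild care needed is that $\psi$ may fail to be injective, which is harmless because we take inverse images of the $\tilde C_l$ rather than direct images.
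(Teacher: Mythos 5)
Your proof is correct and is precisely the argument the paper intends: Proposition \ref{prop:funct1} is stated there as an immediate consequence of Corollary \ref{cor:greedy}, and your reparametrization via $\psi$, the observation that $\|\psi(t)\|_\infty\leq r^{-j}$, and the pullback of the resulting sets supply exactly the details left implicit. The remark about non-injectivity of $\psi$ being handled by taking preimages is also the right way to close that small gap.
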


The next result is crucial.

\begin{prop}
\label{prop:funct2}
Let $u,u'\in \ell^2(I)$, $J\subset I$, $j\geq k$ and $J'\subset J$ be such that 
$|u_i-u'_i|\leq 2r^{-k}$
for all $i\in J'$. Let $T$ be a subset of $\ell^2(I)$ with $\Delta(T,J,u,k,j+2)\leq c$.
If $r^{-j-1}\sqrt{\log m}\leq \sigma$ and $\Lf c\leq \sigma$ 
then there exist sets $A_1,\ldots,A_m\subset T$ such that
\[
\Delta(A_l,J,u,k,j+1)\leq \sigma\quad \mbox{for } 1\leq l\leq m
\]
and either $T\subset \bigcup_{l\leq m}A_l$ or
\begin{equation}
\label{eq:funct2r}
F\Big(T\setminus\bigcup_{l=1}^m A_l,J',u',j+2,j+2\Big)\leq 
F(T,J,u,k,j+1)- \frac{1}{\Lg}\sigma\sqrt{\log m}.
\end{equation}
\end{prop}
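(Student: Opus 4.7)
The plan is to apply Proposition~\ref{prop:prop1} to an auxiliary Bernoulli process obtained by ``freezing'' the chopping level at $j+2$. Set
\[
\tilde I:=\{(i,p):i\in J,\ w_{k,j+2}(u_i)<p\leq v_{k,j+2}(u_i)\},
\]
and, for $t\in T$, define $\tilde t\in\ell^2(\tilde I)$ by $\tilde t_{(i,p)}:=\varphi_{(p-1)r^{-j-2},pr^{-j-2}}(t_i)$. Then $X_t(J,u,k,j+2)=\sum_{(i,p)\in\tilde I}\tilde t_{(i,p)}\ve_{(i,p)}$, and the $\ell^2(\tilde I)$-distance on $\tilde T:=\{\tilde t:t\in T\}$ coincides with $d(J,u,k,j+2)$. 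Set analogously $\tilde J:=\{(i,p):i\in J',\ w_{j+2,j+2}(u'_i)<p\leq v_{j+2,j+2}(u'_i)\}$. Using $|u_i-u'_i|\leq 2r^{-k}$ for $i\in J'$, a short computation shows that the defining real interval of $G(u'_i,j+2)$ lies inside $[u'_i-4r^{-j-2},u'_i+4r^{-j-2}]\subset[u_i-3r^{-k},u_i+3r^{-k}]$ (since $4r^{-j-2}\leq r^{-k}$ whenever $r\geq 4$ and $j\geq k$), which in turn sits inside the defining interval of $G(u_i,k,j+2)$. Hence $G(u'_i,j+2)\subset G(u_i,k,j+2)$, so $\tilde J\subset\tilde I$, and since the Bernoulli variables are labelled by the true grid index $p$, $X_t(J',u',j+2,j+2)=\sum_{(i,p)\in\tilde J}\tilde t_{(i,p)}\ve_{(i,p)}$. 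In particular $F(T,J',u',j+2,j+2)=b_{\tilde J}(\tilde T)$.

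Next I would apply Proposition~\ref{prop:prop1} to $\tilde T\subset\ell^2(\tilde I)$ with the subset $\tilde J$, taking $\lambda:=1$, $b:=r^{-j-2}$ (each chopping function has oscillation at most $r^{-j-2}$, so $\|\tilde t-\tilde s\|_\infty\leq r^{-j-2}$), the given $c$ (since $d_{\tilde J}\leq d_{\tilde I}=d(J,u,k,j+2)$ is bounded by $c$ on $\tilde T$), and covering radius $\sigma_0:=\sigma/(2\sqrt{r})$. The hypothesis $b\sqrt{\log m}\leq\lambda\sigma_0$ reduces to $2r^{-j-3/2}\sqrt{\log m}\leq\sigma$, which follows from $r^{-j-1}\sqrt{\log m}\leq\sigma$ and $r\geq 4$. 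The proposition delivers points $\tilde t_1,\ldots,\tilde t_m\in\tilde T$; writing $t_l\in T$ for a preimage of $\tilde t_l$, I set
\[
A_l:=\{s\in T:d(J,u,k,j+2)(s,t_l)\leq\sigma_0\},\qquad 1\leq l\leq m.
\]

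Finally, $\Delta(A_l,J,u,k,j+2)\leq 2\sigma_0$, and Proposition~\ref{prop:distcalG}(ii) applied with $q=r$ (each interval of the $r^{-j-1}$-grid splits into $r$ intervals of the $r^{-j-2}$-grid) upgrades this to $\Delta(A_l,J,u,k,j+1)\leq 2\sqrt{r}\,\sigma_0=\sigma$. In the nontrivial branch of Proposition~\ref{prop:prop1},
\[
F\Big(T\setminus\bigcup_{l\leq m}A_l,J',u',j+2,j+2\Big)\leq F(T,J,u,k,j+2)-\Big(\tfrac{\sigma_0}{4\Lb}-\Lh c\Big)\sqrt{\log m},
\]
and monotonicity of $F$ in $j$ (Proposition~\ref{prop:monfunct}) lets me replace $F(T,J,u,k,j+2)$ by the larger $F(T,J,u,k,j+1)$ on the right. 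Choosing $\Lf$ sufficiently large in terms of $\Lb$, $\Lh$ and $r$ so that $\Lf c\leq\sigma$ forces $\Lh c\leq\sigma_0/(8\Lb)$ turns the decrement into at least $\sigma_0\sqrt{\log m}/(8\Lb)=\sigma\sqrt{\log m}/(16\Lb\sqrt{r})$, yielding \eqref{eq:funct2r} with $\Lg:=16\Lb\sqrt{r}$. The only delicate point is the balancing of the two chopping resolutions: the pieces $A_l$ must have small diameter at resolution $j+1$ (to match the outer construction), whereas the inclusion $\tilde J\subset\tilde I$ and the application of Proposition~\ref{prop:prop1} force us to work at the finer resolution $j+2$, and Propositions~\ref{prop:distcalG}(ii) and~\ref{prop:monfunct} provide the bridges between these two scales.
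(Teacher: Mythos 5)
Your overall strategy --- realizing the chopped process as a Bernoulli process on a product index set, identifying $F(\cdot,J',u',j+2,j+2)$ with $b_{\tilde J}$ via the inclusion $G(u_i',j+2,j+2)\subset G(u_i,k,j+2)$, and invoking Proposition \ref{prop:prop1} --- is exactly the right one, and those steps check out. The genuine gap is quantitative: because you refine the \emph{entire} grid over $J$ to resolution $j+2$, the only way to get $\Delta(A_l,J,u,k,j+1)\leq\sigma$ from balls in the $d(J,u,k,j+2)$ metric is through Proposition \ref{prop:distcalG}(ii) with $q=r$, and that factor $\sqrt{r}$ is sharp (two points at distance $r^{-j-1}$ inside a single coarse cell have $d(J,u,k,j+1)=r^{-j-1}$ but $d(J,u,k,j+2)=r^{-j-1}/\sqrt r$). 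This forces your covering radius down to $\sigma_0=\sigma/(2\sqrt r)$, so the decrement you extract is only $\sigma\sqrt{\log m}/(16\Lb\sqrt r)$ and your $\Lf,\Lg$ grow like $\sqrt r$. But $\Lf=L_8$ and $\Lg=L_9$ must be \emph{universal} constants, independent of $r$: in Corollary \ref{cor:maindec} one sets $\Li:=\Le\Lf$, and Proposition \ref{prop:partconstr} then requires $r\geq 4\Li^2$, which with $\Lf\sim\sqrt r$ reads $r\geq Cr$ for a large constant $C$ and is unsatisfiable. So the statement you actually prove is too weak to feed the rest of the construction.

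The paper's proof avoids this loss by \emph{not} refining the whole grid. It takes ${\cal G'}$ to be the coarse family $G_i=G(u_i,k,j+1)$ augmented, for $i\in J'$ only, by the eight points of $G(u_i',j+2,j+2)$. Then each coarse cell contains at most $9$ points of $G_i'$, so Proposition \ref{prop:distcalG}(ii) with $q=9$ gives $d_{{\cal G}}\leq 3d_{{\cal G'}}$ --- a universal factor $3$ in place of your $\sqrt r$ --- while Proposition \ref{prop:functcalG}(ii) still gives $\Ex\sup_{t\in T}X_t({\cal G'})\leq F(T,J,u,k,j+1)$ (here the comparison with level $j+1$ comes for free from the contraction principle, rather than from your extra monotonicity step). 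One then applies Proposition \ref{prop:prop1} with $b=r^{-j-1}$, $\lambda=6$ and $\sigma^*=\sigma/6$, and all resulting constants ($\Lf=288\Lb\Lh$, $\Lg=288\Lb$) are universal. If you replace your ambient family $\tilde I$ by this ``coarse grid plus eight points'' family, the rest of your argument goes through essentially unchanged.
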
  

\begin{proof}
Let ${\cal G}=(G_i)_{i\in J}$, ${\cal G'}=(G_i')_{i\in J}$, where
$$
G_i=G(u_i,k,j+1),\ i\in J$$
and
\[
G_i'=\left\{\begin{array}{ll}
G_i & \mbox{ for }i\in J\setminus J',\\
G_i\cup G(u_i',j+2,j+2)  & \mbox{ for }i\in J'.
\end{array}\right.
\]
Since $r\geq 4$ and $j\geq k$ we have
\[
G(u_i',j+2,j+2)\subset [u_i'-4r^{-j-2},u_i'+4r^{-j-2})\subset (u_i'-r^{-k},u_i'+r^{-k}).
\]
Moreover $|u_i-u'_i|\leq 2r^{-k}$ for $i\in J'$, and  therefore the sets $G_i$ and $G_i'$ satisfy the condition
\eqref{eq:GGprime} and Proposition \ref{prop:functcalG}ii) yields
\[
\Ex\sup_{t\in T}X_t({\cal G'})\leq \Ex\sup_{t\in T}X_t({\cal G})=F(T,J,u,k,j+1).
\] 
Since $|G(u_i',j+2,j+2)|=8$, Proposition \ref{prop:distcalG}ii) with $q=9$ yields $d_{{\cal G}}\leq 3d_{{\cal G'}}$.

For $i\in J'$ we have $|u_i-u_i'|\leq 2r^{-k}$, so that
\[
|pr^{-j-2}-u_i'|\leq 4r^{-j-2}\ \Rightarrow |pr^{-j-2}-u_i|\leq 2r^{-k}+4r^{-j-2}\leq 3r^{-k}
\]
and therefore $G(u_i',j+2,j+2)\subset G(u_i,k,j+2)$. Thus
\[
\Delta(T,J',u',j+2,j+2)\leq \Delta(T,J,u,k,j+2)\leq c.
\]

We apply Proposition \ref{prop:prop1} with $b=r^{-j-1},\lambda=6$ and $\sigma^*,I^*,J^*,T^*$ instead of $I,J$ and $T$, where
$\sigma^*:=\sigma/6$,
\[
I^*:=\{(i,u)\colon\ i\in J,\ u\in G_i'\setminus \{\min G_i\}\},
\]
\[
J^*:=\{(i,u)\colon\ i\in J',\ u\in  G(u_i',j+2,j+2)\setminus\{\min G(u_i',j+2,j+2)\}\}
\]
and for $A\subset T$,
\[
A^*:=\{(\varphi_{u-,u}(t_i))_{(i,u)}\colon\ t\in A,\ (i,u)\in I^*\},  
\]
where for $(i,u)\in I^*$, $u-$ denotes the largest element of $G_i'$ smaller than $u$.
Observe that with the notation of Proposition \ref{prop:prop1} we have for $A\subset T$
\[
b_{I^*}(A^*)=\Ex\sup_{t\in A}X_t({\cal G'})\quad \mbox{and}\quad b_{J^*}(A^*)=F(A,J',u',j+2,j+2).
\]
It is not hard to check that all the  assumptions of the proposition are satisfied. Hence there exist sets $A_1,\ldots,A_m\subset T$
such that $A_l^*\subset B_{I^*}(t^*_l,\sigma^*)$ for some $t_l^*\in T^*$ and 
\begin{align*}
F\Big(T\setminus\bigcup_{l=1}^m A_l,J',u',j+2,&j+2\Big)
\leq \Ex\sup_{t\in T}X_t({\cal G'})-\Big(\frac{1}{144\Lb}\sigma-\Lh c\Big)\sqrt{\log m}
\\
&\leq F(T,J,u,k,j+1)-\Big(\frac{1}{144\Lb}\sigma-\Lh c\Big)\sqrt{\log m}.
\end{align*}
Hence condition \eqref{eq:funct2r} holds if we take $\Lf=288\Lb\Lh$ and $\Lg=288\Lb$.
We conclude by observing that the condition $A_l^*\subset B_{I^*}(t_l^*,\sigma^*)$ implies that for $s,t\in A_l$, we have
$d_{{\cal G}}(s,t)\leq 3d_{{\cal G'}}(s,t)\leq 6\sigma^*=\sigma$, and hence
$\Delta(A_l,J,u,k,j+1)\leq \sigma$, $1\leq l\leq m$.  
\end{proof}

We finish this section with a key corollary which states that our functionals satisfy a Talagrand-type decomposition condition  
Namely each set may be decomposed into pieces of three types. Pieces of type (C3) have small diameters and pieces of type (C1)
have small value of a functional on subsets with sufficiently small diameters, in both cases we do not change values of parameters $k,J$ and $u$. Pieces satisfying conditions (C2) are of different type -- they have both small diameters and 
small value of functionals, however we increase the parameter $k$ and allow changes in parameters $u$ and $J$.

\begin{cor}
\label{cor:maindec}
There exists a positive integer $r_0$ with the following property. 
Consider $T\subset \ell^2(I)$, $J\subset I$, $u\in \ell^2(I)$, $u'\in T$, $c\geq 0$ and
integers $j\geq k$, $n\geq 1$, $r\geq r_0$ and set
\[
J':=\{i\in J\colon\ |u_i-u'_{i}|\leq 2r^{-k}\}.
\]
Then we can find $p\leq N_n$ and a partition 
$(A_l)_{l\leq p}$ of $T$ such that each set $A_l$ satisfies one of the following properties:\\
\begin{align}
\notag
\mbox{for any }D\subset A_l \mbox{ with }\Delta(D,J,u,k,j+2)\leq \frac{1}{\Li}2^{n/2}r^{-j-1}
\\
\tag{C1}
F(D,J,u,k,j+2)\leq F(T,J,u,k,j+2)-\frac{1}{\Lj}2^nr^{-j-1}
\end{align}
or
\begin{equation}
\tag{C2a}
\Delta(A_l,J',u',j+2,j+2)\leq \Delta(A_l,J,u,k,j+2)\leq 2^{n/2}r^{-j-1},
\end{equation}
\begin{align}
\notag
F(A_l,J',u',j+2,j+2)
&\leq F(T,J,u,k,j+1)-\frac{1}{\Lk}2^nr^{-j-1}
\\
\tag{C2b}
&\leq F(T,J,u,k,j)-\frac{1}{\Lk}2^nr^{-j-1}
\end{align}
or
\begin{equation}
\tag{C3}
\Delta(A_l,J,u,k,j+1)\leq 2^{n/2}r^{-j-1}. 
\end{equation}
\end{cor}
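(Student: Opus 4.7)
The plan is to obtain the partition by a two-stage construction: first apply Proposition \ref{prop:funct1} to $T$ to extract pieces of small $d(\cdot,J,u,k,j+2)$-diameter on a scale $\sigma_1$, leaving a single residue that will be the (C1) piece; then, within each such piece, apply Proposition \ref{prop:funct2} to produce pieces of $d(\cdot,J,u,k,j+1)$-diameter on the scale $\sigma_2:=2^{n/2}r^{-j-1}$ (which are exactly type (C3)), together with at most one more residue per piece that will be of type (C2). The scale for stage one will be $\sigma_1:=\alpha\sigma_2$ for a universal constant $\alpha\in(0,1)$, where $\alpha=1/(\Le\Lf)$ is forced by the hypothesis $\Lf c\le\sigma$ of Proposition \ref{prop:funct2}. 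Both stages will use the same count $m_1=m_2:=N_{n-1}$, so that the total number of pieces is bounded by $1+(m_1-1)(m_2+1)=N_{n-1}^2=N_n$ on the nose, giving $p\le N_n$.

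In stage one I would apply Proposition \ref{prop:funct1} with parameters $(J,u,k,j+2)$, count $m_1$, and radius $\sigma_1$, obtaining sets $C_1,\dots,C_{m_1-1}$ of $d(\cdot,J,u,k,j+2)$-diameter at most $\Le\sigma_1\le\sigma_2$ and a residue $T_0:=T\setminus\bigcup_l C_l$ on which any subset $D$ with $\Delta(D,J,u,k,j+2)\le\sigma_1$ satisfies $F(D,J,u,k,j+2)\le F(T,J,u,k,j+2)-\sigma_1\sqrt{\log m_1}$. Since $\sqrt{\log m_1}$ is of order $2^{n/2}$, taking $\Li:=1/\alpha$ and $\Lj$ large enough gives (C1) for $T_0$ verbatim. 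The prerequisite $r^{-j-2}\sqrt{\log m_1}\le\sigma_1$ of Proposition \ref{prop:funct1} reduces to a lower bound $r\ge r_0$ depending only on $\alpha$, which fixes $r_0$.

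In stage two, to each $C_l$ I would apply Proposition \ref{prop:funct2} with $T\leftarrow C_l$, the original $(J,u,k,j)$, the prescribed $(J',u')$, $c:=\Le\sigma_1$, count $m_2$, and radius $\sigma_2$; the hypothesis $\Lf c\le\sigma_2$ is precisely what forced the choice $\alpha\le 1/(\Le\Lf)$ above. The proposition yields sets $A_l^1,\dots,A_l^{m_2}$ of $d(\cdot,J,u,k,j+1)$-diameter at most $\sigma_2$, which are type (C3). If the residue $R_l:=C_l\setminus\bigcup_i A_l^i$ is nonempty, it satisfies $F(R_l,J',u',j+2,j+2)\le F(C_l,J,u,k,j+1)-\sigma_2\sqrt{\log m_2}/\Lg\le F(T,J,u,k,j+1)-\sigma_2\sqrt{\log m_2}/\Lg$ by monotonicity of $F$ in its first argument (here $C_l\subset T$), and combining with $F(T,J,u,k,j+1)\le F(T,J,u,k,j)$ from Proposition \ref{prop:monfunct} and choosing $\Lk$ large enough gives (C2b).

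The main obstacle will be verifying (C2a), specifically the stronger of the two diameter bounds $\Delta(R_l,J',u',j+2,j+2)\le\Delta(R_l,J,u,k,j+2)$. The bound on the larger diameter is immediate from $R_l\subset C_l$, since $\Delta(R_l,J,u,k,j+2)\le\Le\sigma_1\le 2^{n/2}r^{-j-1}$ by our choice of $\alpha$. The comparison of the two distances, however, is \emph{not} an instance of Proposition \ref{prop:monfunct} because the base points $u$ and $u'$ differ; rather, it follows from the geometric observation already exploited in the proof of Proposition \ref{prop:funct2}, namely that the hypothesis $|u_i-u'_i|\le 2r^{-k}$ for $i\in J'$ together with $4r^{-j-2}\le r^{-k}$ (which holds since $r\ge 4$ and $j\ge k$) forces $G(u'_i,j+2,j+2)\subset G(u_i,k,j+2)$. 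Hence $X_t(J',u',j+2,j+2)$ uses a subset of the Bernoulli variables defining $X_t(J,u,k,j+2)$, and the distance inequality is immediate from the defining formulas. The remainder of the argument is bookkeeping, checking that the single choice $\alpha=1/(\Le\Lf)$ is compatible with setting $\Li,\Lj,\Lk$ and $r_0$ to universal constants that make all four conclusions (count $\le N_n$, (C1), (C2), (C3)) hold simultaneously.
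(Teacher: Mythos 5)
Your proposal is correct and follows essentially the same route as the paper's proof: Proposition \ref{prop:funct1} at the reduced scale $\sigma_1=2^{n/2}r^{-j-1}/(\Le\Lf)$ (with $m=\sqrt{N_n}=N_{n-1}$) to produce the (C1) residue and the pieces $C_l$ of diameter at most $\Le\sigma_1=c$, followed by Proposition \ref{prop:funct2} on each $C_l$ at scale $2^{n/2}r^{-j-1}$ to produce the (C3) pieces and one (C2) residue, with the count $1+(m-1)(m+1)=N_n$ and the inclusion $G(u_i',j+2,j+2)\subset G(u_i,k,j+2)$ handling (C2a). The constants $\Li=\Le\Lf$, $\Lj,\Lk$ of order $\Le\Lf$ and $\Lg$, and $r_0\geq\Le\Lf$ come out exactly as in the paper.
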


\begin{proof}
Let $m:=\sqrt{N_{n}}$ so that $\sqrt{\log m}=2^{(n-1)/2}\sqrt{\log 2}$. Without loss of generality we may also
assume $\Lf\geq 1$ (where $\Lf$ is the absolute constant given by Proposition \ref{prop:funct2}).

We first apply Proposition \ref{prop:funct1} with $j+2$  and $\sigma=\frac{1}{\Le\Lf}2^{n/2}r^{-j-1}$.
Observe that $r^{-j-2}\sqrt{\log m}\leq r^{-j-2}2^{(n-1)/2}\leq \sigma$ if $r_0\geq \Le\Lf$. This way we obtain the 
decomposition $T=\bigcup_{l\leq m-1}C_l\cup A_1$, where  $\Delta(C_l,J,u,k,j+2)\leq c:=\frac{1}{\Lf}2^{n/2}r^{-j-1}$
and $A_1$ satisfies the condition (C1) with $\Li:=\Le\Lf$, $\Lj:=(2/\log(2))^{1/2}\Le\Lf$.

Now for $l\leq m-1$ we apply  Proposition  \ref{prop:funct2} with $T=C_l$, $\sigma=2^{n/2}r^{-j-1}$ and we decompose 
$C_l$ into at most $m+1$  sets that satisfy either (C2b) with $\Lk:=(2/\log(2))^{1/2}\Lg$ or (C3).
Since $G(u_i',j+2,j+2)\subset G(u_i,k,j+2)$ for $i\in J'$ and $\Lf\geq 1$ we get 
$\Delta(A_l,J',u',j+2,j+2)\leq \Delta(A_l,J,u,k,j+2)\leq c\leq  2^{n/2}r^{-j-1}$ and (C2a) follows.

This way we decompose the set $T$ into at most $1+(m-1)(m+1)=N_n$ sets $A_l$ satisfying one of the conditions 
(C1)-(C3). 
\end{proof}

\section{Partition construction}
\label{sec:constr}

To prove Theorem \ref{th:BC} with the use of Theorem \ref{th:part} we need to construct a suitable admissible 
sequence of partitions $({\cal A}_n)_{n\geq 0}$ of the index set $T$. In this section we present such a  construction.

We  use the following notation. For $A\in {\cal A}_n$, $n\geq 1$ by $A'$ we will denote the unique
set in ${\cal A}_{n-1}$ such that $A\subset A'$. For $t\in T$ and $n\geq 0$, $A_n(t)$ is the unique
element of ${\cal A}_n$ which contains $t$. Moreover if to each set $A\in {\cal A}_n$  is assigned
a certain quantity (which may be a point, a number or a set) $\alpha_n(A)$, then to shorten the notation
we write $\alpha_n(t)$ for $\alpha_n(A_n(t))$.

The following simple lemma will be very useful. It was proven in \cite{Tab2}, we 
rewrite its proof for the sake of completeness. 

\begin{lem}[{\cite[Lemma 2.6.3]{Tab2}}]
\label{lem:smart}
Let $\alpha>1$ and $(a_n)_{n\geq 0}$ be a sequence of positive numbers such that $\sup_{n}a_n<\infty$. Define
\[
V:=\{m\geq 0\colon\ a_n<a_m\alpha^{|n-m|}\mbox{ for all }n\geq 0,n\neq m\}.
\] 
Then
\[
\sum_{n\geq 0}a_n\leq\frac{2\alpha}{\alpha-1}\sum_{m\in V}a_m.
\]
\end{lem}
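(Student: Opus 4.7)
The strategy is to assign to each $n \geq 0$ a charging index $\phi(n) \in V$ such that $a_n \leq a_{\phi(n)} \alpha^{-|\phi(n)-n|}$, and then sum the resulting estimate over $n$. I would construct $\phi(n)$ by a short iterative procedure: start with $m_0 := n$; if $m_i \in V$, set $\phi(n) := m_i$ and stop; otherwise, the failure of $m_i \in V$ furnishes some $k \neq m_i$ with $a_k \geq a_{m_i} \alpha^{|k-m_i|}$, and we put $m_{i+1} := k$.

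Two properties drive the argument. First, the iteration terminates: by construction $a_{m_{i+1}} \geq a_{m_i} \alpha^{|m_{i+1}-m_i|} \geq \alpha \, a_{m_i}$ (since $m_{i+1}\neq m_i$), so $a_{m_i} \geq \alpha^i a_n$; the hypothesis $\sup_n a_n < \infty$ together with $a_n > 0$ forbids this from continuing indefinitely, so some $\phi(n) := m_N$ belongs to $V$. Second, writing $f_n(m) := a_m \alpha^{-|m-n|}$, the sequence $f_n(m_i)$ is non-decreasing in $i$: from $a_{m_{i+1}} \geq a_{m_i} \alpha^{|m_{i+1}-m_i|}$ and the triangle inequality $|m_{i+1}-m_i| + |m_i - n| \geq |m_{i+1} - n|$,
\[
f_n(m_{i+1}) \geq a_{m_i} \alpha^{|m_{i+1}-m_i| - |m_{i+1}-n|} \geq a_{m_i} \alpha^{-|m_i-n|} = f_n(m_i).
\]
Since $f_n(m_0) = f_n(n) = a_n$, at termination we obtain the key pointwise bound $a_n \leq f_n(\phi(n)) = a_{\phi(n)} \alpha^{-|\phi(n)-n|}$.

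Once the charging map $\phi$ is in hand, the conclusion is a routine double-sum swap:
\[
\sum_{n \geq 0} a_n \leq \sum_{n \geq 0} a_{\phi(n)} \alpha^{-|\phi(n)-n|} = \sum_{m \in V} a_m \sum_{n \geq 0,\, \phi(n)=m} \alpha^{-|m-n|} \leq \sum_{m \in V} a_m \sum_{k \in \zet} \alpha^{-|k|},
\]
and $\sum_{k \in \zet} \alpha^{-|k|} = (\alpha+1)/(\alpha-1) \leq 2\alpha/(\alpha-1)$, which is the asserted bound.

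The main point that requires care is the termination of the iteration, where the boundedness hypothesis $\sup_n a_n<\infty$ enters essentially; everything else (the triangle inequality keeping $f_n(m_i)$ monotone, and the geometric sum over $\zet$) is routine bookkeeping.
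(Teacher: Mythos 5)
Your proof is correct and is essentially the paper's argument: the paper defines the partial order $n\prec m$ iff $a_m\geq a_n\alpha^{|n-m|}$ and charges each $n$ to a maximal element of $V$ dominating it, which is exactly your iteration $m_0=n, m_1, \ldots, \phi(n)$ made explicit (transitivity of $\prec$ being your triangle-inequality monotonicity of $f_n$, and termination being the same boundedness argument). The concluding double-sum swap and geometric series bound are identical.
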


\begin{proof}
We define a partial order on $\en$ by $n\prec m$ if and only if $a_m\geq a_n\alpha^{|n-m|}$. Then  $V$ is just the set
of maximal elements of $\prec$, i.e. if $m\in V$, $m\prec m'$ then $m'=m$. Moreover, since $a_n$ is bounded
there cannot exist an infinite  sequence of integers increasing with respect to $\prec$. Therefore
for each $n\in \en$ there exists $m\in V$ such that $n\prec m$. Thus
\[
\sum_{n\geq 0}a_n\leq \sum_{m\in V}a_m\sum_{n\geq 0}\alpha^{-|n-m|}\leq
\frac{2\alpha}{\alpha-1}\sum_{m\in V}a_m.
\]
\end{proof}

We are now ready to describe the partition construction. It is based on the iterative application of Corollary
\ref{cor:maindec}. Unfortunately we will need to control several parameters. The integers $k_n\leq j_n$, the 
points $u_n\in T$ and the sets $J_n\subset I$ are related to the functionals studied in the previous
sections. The parameter $p_n=0$ means that we will use Corollary \ref{cor:maindec} to decompose
the set and $p_n>0$ means that we will wait $2\kappa-p_n$ steps before doing it. 

Let us first summarize the main dependencies between these quantities. The first condition 
gives initial values of parameters
\begin{equation}
\tag{P1}
p_0(T)=0,\ j_0(T)=k_0(T)=j_0,\ J_0(T)=I.
\end{equation}
The next requirement is a mild regularity condition (in all conditions below we assume that
$A\in {\cal A}_n$ for some $n\geq 1$) 
\begin{equation}
\tag{P2}
j_{n-1}(A')\leq j_n(A)\leq j_{n-1}(A')+2,\quad k_{n-1}(A')\leq k_n(A). 
\end{equation}
Observe that we do not  bound the difference $k_n(A)-k_{n-1}(A')$ from above. Now we state a crucial estimate for the diameter of 
the set $A$:
\begin{equation}
\tag{P3} 
p_n(A)=0\ \Rightarrow\ \Delta(A,J_n(A),u_n(A),k_n(A),j_n(A))\leq 2^{n/2}r^{-j_n(A)},
\end{equation}
and its  version for a positive value of the counter $p_n(A)$:
\begin{equation}
\tag{P4}
p_n(A)>0\ \Rightarrow\ \Delta(A,J_n(A),u_n(A),k_n(A),j_n(A))\leq 2^{(n-p_n(A))/2}r^{-j_n(A)+1}.
\end{equation}
We require that ``parameters $k,J,u$ do not change unless $p_n(A)=1$"
\begin{equation}
\tag{P5}
p_n(A)\neq 1\ \Rightarrow\ u_n(A)=u_{n-1}(A'),\ k_n(A)=k_{n-1}(A'), J_n(A)=J_{n-1}(A').
\end{equation}
Next condition describes how parameters changes if $p_n(A)=1$:
\begin{align}
\notag 
p_n(A)= 1\ \Rightarrow\ &u_n(A)\in A',\ j_n(A)=j_{n-1}(A')+2  \mbox{ and }
\\
\tag{P6}
&J_n(A)=\{i\in J_{n-1}(A')\colon\ |u_{n}(A)_i-u_{n-1}(A')_i|\leq 2r^{-k_{n-1}(A')}\}.
\end{align}
For $p_n(A)>1$ parameter $j_n$ does not change 
\begin{equation}
\tag{P7}
p_n(A)>1\ \Rightarrow\ j_n(A)=j_{n-1}(A').
\end{equation}
Last two conditions describe the behavior of the counter $p_n$
\begin{equation}
\tag{P8}
p_n(A)>0\ \Rightarrow\ p_n(A)=p_{n-1}(A')+1,
\end{equation}
and
\begin{equation}
\tag{P9}
p_n(A)=0\Rightarrow\ p_{n-1}(A')\in\{0,2\kappa-1\},\ j_n(A)\leq j_{n-1}(A')+1.
\end{equation}

\begin{prop}
\label{prop:partconstr}
Suppose that $r=2^\kappa$, where  $\kappa$ is a sufficiently large positive integer and  
$T\subset \ell^2(I)$ satisfies $\Delta_2(T)\leq  r^{-j_0}$. Then 
there exists an admissible sequence of partitions  $(\cala_n)_{n\geq 0}$ of $T$, points $u_n(A)\in T$,
sets $J_n(A)\subset I$ and integers 
$k_n(A)\leq j_n(A)$, $0\leq p_n(A)\leq 2\kappa-1$, $A\in \cala_n$  which satisfy conditions (P1)-(P9).
Moreover for all $t\in T$,
\begin{equation}
\label{eq:estsum}
\sum_{n=0}^{\infty}2^{n}r^{-j_n(t)}\leq K(r)(r^{-j_0(T)}+b(T)),
\end{equation}
where $K(r)$ is a constant that depends only on $r$.
\end{prop}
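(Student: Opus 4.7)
My plan is to build $(\mathcal{A}_n)_{n\geq 0}$ inductively by iterating Corollary \ref{cor:maindec}, letting the counter $p_n(A)$ arbitrate between ``active'' decomposition steps (when $p_{n-1}(A')\in\{0,2\kappa-1\}$) and ``passive'' waiting steps (when $0<p_{n-1}(A')<2\kappa-1$). I initialize with $\mathcal{A}_0=\{T\}$, $p_0(T)=0$, $j_0(T)=k_0(T)=j_0$, $J_0(T)=I$, and an arbitrary $u_0(T)\in T$. The hypothesis $\Delta_2(T)\le r^{-j_0}$ together with the bound $\Delta(T,I,u_0(T),j_0,j_0)\le\Delta_2(T)$ from Section \ref{sec:chop} verifies (P3) at $n=0$; the other conditions are vacuous at the initial step.

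For the inductive step, given $\mathcal{A}_{n-1}$ with its parameters, I process each $A'\in\mathcal{A}_{n-1}$ by cases on $p_{n-1}(A')$. In the active case I apply Corollary \ref{cor:maindec} to $A'$ with the inherited $(J,u,k,j)$, an arbitrarily chosen $u'\in A'$, and the current level index $n$, partitioning $A'$ into at most $N_n$ pieces of types (C1), (C2), (C3). Each (C3) piece $A$ becomes a child with $p_n(A)=0$, $j_n(A)=j_{n-1}(A')+1$, other parameters inherited; (P3) is immediate from (C3). Each (C1) piece becomes a child with $p_n(A)=0$, $j_n(A)=j_{n-1}(A')$, and unchanged $(u,k,J)$; here (P3) is preserved because the inductive hypothesis gives $\Delta(A,J,u,k,j)\le\Delta(A',J,u,k,j)\le 2^{(n-1)/2}r^{-j}\le 2^{n/2}r^{-j_n(A)}$. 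Each (C2) piece becomes a child with $p_n(A)=1$, $j_n(A)=j_{n-1}(A')+2$, $u_n(A)=u'$, $k_n(A)=j+2$, $J_n(A)=J'$, with (P4) following from (C2a). In the passive case I set $A=A'$ with $p_n(A)=p_{n-1}(A')+1$ and all other parameters unchanged; (P4) is preserved because $n-p_n(A)=(n-1)-p_{n-1}(A')$ and $j_n(A)=j_{n-1}(A')$. The admissibility bound $|\mathcal{A}_n|\le N_n$ requires the usual re-indexing trick using $N_{n-1}N_n\le N_{n+1}$, and constants of the form $\sqrt{2}$ arising from the mismatch between the $2^{n/2}$ scaling in (C2a) and the $2^{(n-1)/2}$ scaling required by (P4) with $p_n=1$ can be absorbed by choosing $r$ sufficiently large.

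For the bound \eqref{eq:estsum}, fix $t\in T$ and set $a_n=2^n r^{-j_n(t)}$. Since $j_n$ increases by $0$, $1$, or $2$ from one step to the next, $a_n/a_{n-1}\in\{2,2/r,2/r^2\}$, so for $r$ large $a_n$ decays geometrically whenever $j_n$ increases and doubles when $j_n$ stays put. I would apply Lemma \ref{lem:smart} with some $\alpha\in(1,r/2)$ to reduce $\sum_n a_n$ to $\sum_{m\in V}a_m$, where $V$ is the set of ``locally dominant'' indices. For each $m\in V$ I would trace the chain $A_0(t)\supset\cdots\supset A_m(t)$ backwards to locate the (C1) or (C2) event responsible for the current value of $j_m(t)$, and show that this event caused a decrease in the appropriate functional of order $a_m$ (a (C2) event at step $n_m$ yields a decrease $\ge\frac{1}{L_{12}}2^{n_m}r^{-j_{n_m}-1}\gtrsim a_m$; cumulative (C1) events, which have $j_n$ constant, must eventually terminate because the functional is bounded below). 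The total functional decrease along the chain is bounded above by $F(T,I,u_0(T),j_0,j_0)\le b(T)$ by Proposition \ref{prop:functcalG}, and the leading term $a_0=r^{-j_0(T)}$ contributes the additive $r^{-j_0(T)}$; this gives \eqref{eq:estsum} with a constant $K(r)$ depending on $r$ through $\alpha$ and the cycle length $2\kappa$.

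The main obstacle is this final accounting. The subtlety is that (C1) pieces do not themselves have a controlled diameter and generate only a ``pending'' decrease that is realized on subsequent small-diameter subsets; the counter mechanism with its $2\kappa$-step cycle is precisely what permits these pending decreases to be aggregated into disjoint contributions across the tree of partitions without double-counting. Verifying that each $m\in V$ can be associated unambiguously to a functional decrease of magnitude $\gtrsim a_m$, and that the decreases on different branches of the chain telescope correctly against the single budget $b(T)$, is the genuinely delicate part of the argument and is where the various technical ingredients of Sections \ref{sec:est} and \ref{sec:chop} are combined.
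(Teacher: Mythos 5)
Your construction of the partition is essentially the paper's: initialize with the trivial partition, use the counter $p_n$ to alternate between applying Corollary \ref{cor:maindec} and waiting for $2\kappa-1$ steps, and assign parameters to the three types of pieces exactly as you describe. One small correction there: the $\sqrt2$ mismatch you note between (C2a) and (P4) cannot be ``absorbed by choosing $r$ sufficiently large,'' since it multiplies $r^{-j-1}$ rather than entering the exponent. The clean fix (and what the paper does) is to apply Corollary \ref{cor:maindec} with level index $n$ to split a set of $\cala_n$ into sets of $\cala_{n+1}$; then $|\cala_{n+1}|\le N_n\cdot N_n=N_{n+1}$ with no re-indexing, and the $2^{n/2}$ of (C2a) is exactly the $2^{((n+1)-1)/2}$ demanded by (P4) for a child with $p_{n+1}=1$.

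The genuine gap is the proof of \eqref{eq:estsum}, which you explicitly defer as ``the genuinely delicate part''; this is the heart of the proposition and two concrete ingredients are missing. First, the accounting cannot be done a posteriori by ``tracing the chain backwards'': the functional decreases must be recorded as additional inductive hypotheses of the construction. The paper adds (P10) (when $p_n(A)=1$, $F_n(A)\le F_{n-1}(A')-\frac{1}{\Lk}2^{n-1}r^{-j_n(A)+1}$, inherited from (C2b)) and, crucially, (P11): after a (C1) step, \emph{every} subset $D$ of $A$ whose $d(J,u,k,j+2)$-diameter is at most $\frac{1}{\Li}2^{(n-1)/2}r^{-j-1}$ has its functional reduced by $\frac{1}{\Lj}2^{n-1}r^{-j-1}$. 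A (C1) event thus yields only a \emph{conditional} decrease, realized at a later set $A_m(t)$ only if that set is small in the \emph{old} metric; verifying this requires the quantitative bound $m-n\le\kappa$ on how soon $j$ next increases, and your remark that cumulative (C1) events ``must eventually terminate because the functional is bounded below'' does not produce the needed quantitative charge $a_n\lesssim F_{n}(t)-F_{m}(t)$. Second, a single application of Lemma \ref{lem:smart} does not deliver that bound: the paper applies it twice, first to reduce to $V_1=\{n\colon j_n(t)<j_{n+1}(t)\}$ and then to the enumeration $n_0<n_1<\cdots$ of $V_1$ to extract $V_2$, whose defining comparability $a_{n_{q+1}}<2a_{n_q}$ is exactly what gives $n_{q+1}-n_q\le\kappa$ (and also rules out $j_{n-1}(t)<j_n(t)$ for $n\in V_2$). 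Only then does the four-case analysis charge each $a_{n_q}$ to a difference $F_{n_l}(t)-F_{n_{l+2}}(t)$ with $l\in\{q-1,q,q+1\}$, which telescopes with bounded multiplicity against $F_0(T)\le b(T)$. Without (P10)/(P11) built into the induction and the second application of Lemma \ref{lem:smart}, your sketch does not establish \eqref{eq:estsum}.
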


\begin{proof}
Define $F_n(A):=F(A,J_n(A),u_n(A),k_n(A),j_n(A))$. We will additionally require the following two conditions, which
will help us to prove \eqref{eq:estsum}: first
\begin{equation}
\tag{P10}
p_n(A)=1\ \Rightarrow\ F_n(A)\leq F_{n-1}(A')-\frac{1}{\Lk}2^{n-1}r^{-j_n(A)+1},
\end{equation}
and second, \\
if $n\geq 2$, $p_n(A)=p_{n-1}(A')=0$ and $j_{n}(A)=j_{n-1}(A')$ then for any $D\subset A$ with  
$\Delta(D,J_n(A),u_n(A),k_n(A),j_n(A)+2)\leq \frac{1}{\Li}2^{(n-1)/2}r^{-j_n(A)-1}$ we have 
\\
\begin{align}
\notag
F(D,&J_n(A),u_n(A),k_n(A),j_n(A)+2)
\\
\tag{P11}
&\leq F(A,J_n(A),u_n(A),k_n(A),j_n(A)+2)-\frac{1}{\Lj}2^{n-1}r^{-j_n(A)-1}.
\end{align}

We assume that $\kappa$ is large enough so that $r\geq \max\{r_0,4\Li^2\}$, where $r_0$ is given by Corollary \ref{cor:maindec}.

We start the construction with ${\cal A}_0={\cal A}_1=\{T\}$, $k_1(T)=j_1(T)=k_0(T)=j_0(T)=j_0$,
$p_1(T)=p_0(T)=0$ and $u_1(T)=u_0(T)=t_0$, where $t_0$ is a point in $T$. Since
$\Delta(T,J_n(A),u_n(A),k_n(A),j_n(A))\leq \Delta_2(T)\leq r^{-j_0}$ conditions (P1)-(P11) are 
satisfied for $n\leq 1$.

Assume now that ${\cal A}_n$, $n\geq 1$ is already constructed and fix set $B\in {\cal A}_n$. We will split this set
into at most $N_n$ sets in ${\cal A}_{n+1}$ this way $|{\cal A}_{n+1}|\leq N_n|{\cal A}_n|\leq N_n^2=N_{n+1}$
as required.

If $1\leq p_n(B)\leq 2\kappa-2$ we do not split $B$. That is, we decide that $B\in {\cal A}_{n+1}$ and we set
$p_{n+1}(B):=p_n(B)+1$, $k_{n+1}(B):=k_n(B)$, $j_{n+1}(B):=j_{n}(B)$, $J_{n+1}(B):=J_n(B)$ and $u_{n+1}(B):=u_n(B)$. 
It is easy to see that all required conditions holds for $B$ and $n+1$. 

If $p_n(B)= 2\kappa-1$ we do not split $B$ either, but this time we set
$p_{n+1}(B):=0$, $k_{n+1}(B):=k_n(B)$, $j_{n+1}(B):=j_n(B)$, $J_{n+1}(B):=J_{n}(B)$ and $u_{n+1}(B):=u_n(B)$. 
The condition (P3) for $A=B$ and $n+1$ follows by (P4) for $A=B$.

Finally assume that $p_n(B)=0$ then we will split $B$ using Corollary \ref{cor:maindec} with $T=B$, $u=u_n(B)$,
$u'$ any point in $B$, $J=J_n(B)$,
$k=k_n(B)$ and $j=j_n(B)$. We obtain a partition $B=\bigcup_{l\leq m}A_l$, $m\leq N_{n}$ and each of the sets $A_l$ satisfies
one of the conditions (C1)-(C3). Let $A=A_l$ be one of these sets.

If $A$ satisfies (C1) we set
$p_{n+1}(A):=0$, $j_{n+1}(A):=j_n(B)$, $k_{n+1}(A):=k_n(B)$, $J_{n+1}(A):=J_{n}(B)$ and $u_{n+1}(A):=u_n(B)$.
Property (P11) for $A$ and $n+1$ follows now by (C1).

If $A$ satisfies (C2a)-(C2b) we define
$p_{n+1}(A):=1$, $j_{n+1}(A):=k_{n+1}(A)=j_n(B)+2$,  $u_{n+1}(A):=u'$
and
\[
J_{n+1}(A):=J'=\{i\in J_n(B)\colon\ |u_n(B)_i-u'_i|\leq 2r^{-k_n(A)}\}.
\]
Property (P4) for $A$ and $n+1$ follows by (C2a) and property (P10) by (C2b).

Finally if $A$ satisfies (C3) we define
$p_{n+1}(A):=0$, $j_{n+1}(A)=j_n(B)+1$, $k_{n+1}(A)=k_n(B)$, $J_{n+1}(A):=J_{n}(B)$ and $u_{n+1}(A)=u_n(B)$.
Condition (P3) for $A$ and $n+1$ now follows by (C3).

This way we constructed an admissible partition that satisfies (P1)-(P11). To finish the proof we need 
to show \eqref{eq:estsum}.

Observe that $F_n(A)\leq F_{n-1}(A')$: for $p_n(A)=1$ this obviously follows 
from  (P10), while for
$p_n(A)\neq 1$, we have $u_{n-1}(A')=u_n(A)$, $J_{n-1}(A')=J_n(A)$, $j_{n-1}(A')\leq j_n(A)$ and $k_{n-1}(A')=k_n(A)$
and we may use Proposition \ref{prop:monfunct}.

Fix $t\in T$ and define
$a_n=a_n(t):=2^nr^{-j_n(t)}$. If $p_{n}(t)=0$ and $n\geq 2$ then either $j_{n-1}(t)<j_n(t)$ and  $a_{n-1}>a_n$
or $j_{n-1}(t)=j_n(t)$, $p_{n-1}(t)=0$, which by (P11) gives $a_n\leq 2\Lj rF_n(t)\leq 2\Lj rb(T)$
or $p_{n-1}(t)=2\kappa-1$, which yields $p_{n-2\kappa}(t)=0$, $j_{n-2\kappa}(t)=j_n(t)-2$ and $a_{n-2\kappa}=a_n$.
If $p_n(t)>0$ then taking $n':=\inf\{m\geq n\colon\ p_{m}(t)=0\}$ we get $j_{n'}(t)=j_n(t)$, $p_{n'}(t)=0$
and $a_n<a_{n'}$. This shows that
\[
\sup_{n}a_n\leq \max\{a_0,a_1,2\Lj rb(T)\}<\infty.
\]

Let
\[
V_0:=\{n\geq 0\colon\  a_m<2^{|m-n|}a_n \mbox{ for all }m\geq 0,\ m\neq n\}.
\]
If $n\in V_0$ then $a_{n+1}=2^{n+1}r^{-j_{n+1}(t)}<2a_n=2^{n+1}r^{-j_n(t)}$, so that
\[
V_0\subset V_1:=\{n\geq 0\colon j_n(t)<j_{n+1}(t)\}.
\] 
By Lemma \ref{lem:smart} with $\alpha=2$ we have 
\[
\sum_{n\geq 0}a_n\leq 4\sum_{n\in V_0}a_n\leq 4\sum_{n\in V_1}a_n.
\]
Let us enumerate the elements of $V_1$ as $1\leq n_0<n_1<n_2<\ldots$ and set 
\[
V_2:=\{n_q\colon\ a_{n_{m}}< 2^{|m-q|}a_{n_q}\mbox{ for all }m\geq 0,\ m\neq q\}.
\]
Lemma \ref{lem:smart} applied once again implies 
\[
\sum_{n\geq 0}a_n\leq 4\sum_{n\in V_1}a_n\leq 16\sum_{n\in V_2}a_n.
\]

Fix $n=n_q\in V_2$. If $j_{n-1}(t)<j_n(t)$ then $n-1=n_{q-1}$ and (since $r\geq 4$)
\[
a_{n_{q-1}}=a_{n-1}\geq \frac{r}{2}a_n\geq 2a_n,
\]
which contradicts the definition of $V_2$. Hence $j_{n-1}(t)=j_n(t)<j_{n+1}(t)$. 
We have the following 4 possibilities.

1. $j_{n+1}(t)=j_n(t)+2$, then $p_{n+1}(t)=1$ and by (P10)
\[
a_n=r2^nr^{-j_{n+1}(t)+1}\leq \Lk r(F_{n}(t)-F_{n+1}(t)).
\]

2. $j_{n+1}(t)=j_n(t)+1$ and $j_{n_{q+1}+1}(t)=j_{n_{q+1}}(t)+2$ then $p_{n_{q+1}+1}(t)=1$ and  by (P10)
\[
a_n\leq \frac{1}{4}r^3a_{n_{q+1}+1}\leq \frac{1}{2}\Lk r^2(F_{n_{q+1}}(t)-F_{n_{q+1}+1}(t)).
\]

3. $p_{n-1}(t)=2\kappa-1$ then $p_{n-2\kappa+1}(t)=1$, $j_{n-2\kappa}(t)<j_{n-2\kappa+1}(t)=j_n(t)$,
so $n-2\kappa=n_{q-1}$ and by (P10) 
\begin{align*}
a_n=2^{2\kappa-1}a_{n_{q-1}+1}
&\leq 2^{2\kappa}\Lk r^{-1}(F_{n_{q-1}}(t)-F_{n_{q-1}+1}(t))
\\
&=\Lk r(F_{n_{q-1}}(t)-F_{n_{q-1}+1}(t)).
\end{align*}

4. $p_{n-1}(t)=0$, $j_{n+1}(t)=j_n(t)+1$ and $j_{n_{q+1}+1}(t)=j_{n_{q+1}}(t)+1$. 
Then $p_{n_{q+1}+1}(t)=0$, moreover by the definition of $V_2$
\[
2^{n_{q+1}}r^{-j_n(t)-1}=a_{n_{q+1}}<2a_{n_q}=2^{n+1}r^{-j_n(t)}
\]
which yields $n_{q+1}-n\leq \kappa$. In particular this implies $p_m(t)=0$ for all
$n\leq m\leq n_{q+1}+1$. Hence $k_{n_{q+1}+1}(t)=k_{n}(t)$, $j_{n_{q+1}+1}(t)=j_n(t)+2$, $u_{n_{q+1}+1}(t)=u_n(t)$
and $J_{n_{q+1}+1}(t)=J_n(t)$. Therefore (P3) used for $n=n_{q+1}+1$ and $A=A_{n_{q+1}+1}$ implies
\begin{align*}
\Delta(A_{n_{q+1}+1}(t),J_n(t),u_n(t),k_n(t),j_n(t)+2)
&\leq 2^{(n_{q+1}+1)/2}r^{-j_{n}(t)-2}
\\
&\leq \frac{1}{\Li}2^{(n-1)/2}r^{-j_n(t)-1},
\end{align*}
where the last estimate follows since $n_{q+1}-n\leq \kappa$ and $r=2^\kappa\geq (2\Li)^2$.
Then either $n=1$ or we may apply (P11)  to $D=A_{n_{q+1}+1}$ and get
\[
a_n\leq 2\Lj r(F_{n}(t)-F_{n_{q+1}+1}(t)).
\]

This shows that for $n=n_q\in V_2$,  either $n=1$ or $a_n\leq K(r)(F_{n_{l}}(t)-F_{n_{l+2}}(t))$ for some
$l\in\{q-1,q,q+1\}$. By  monotonicity of the map $l\mapsto F_{n_l}(t)$ this gives (with a value of $K(r)$ which may change
at each occurrence)
\[
\sum_{n\geq 0}a_n\leq 16\sum_{n\in V_2}a_n\leq 16a_1+K(r)F_0(T)\leq K(r)(r^{-j_0}+b(T)).
\]

\end{proof}

\section{Proofs of the Main Result}
\label{sec:proofs}

We are now ready to present proofs of the main Theorem \ref{th:BC} and Corollary \ref{cor:conv}.

\begin{proof}[Proof of Theorem \ref{th:BC}]
By homogeneity we may assume that $b(T)=1/4$ and then $\Delta_2(T)\leq 1$ by Lemma \ref{lem:diamest}.
We apply Proposition \ref{prop:partconstr} with $j_0=0$ and get an admissible sequence of partitions
$({\cal A}_n)_{n\geq 0}$, numbers $p_n(A),k_n(A),j_n(A)$ and points $u_n(A)$. First we inductively
define points $\pi_n(A)$. We set $\pi_0(T)=u_0(T)$ and for $A\in{\cal A}_n$, $n\geq 1$ we define
$\pi_n(A)=\pi_{n-1}(A')$ if $j_n(A)=j_{n-1}(A')$, $\pi_n(A)=u_n(A)$ if $p_n(A)=1$ and choose for
$\pi_n(A)$ an arbitrary point in $A$ if $p_n(A)=0$ and $j_n(A)>j_{n-1}(A')$.

As in Theorem \ref{th:part} we set 
\[
I_n(t):=\big\{i\in I\colon\ |\pi_{q+1}(t)_i-\pi_q(t)_i|\leq r^{-j_q(t)} \mbox{ for } 0\leq q\leq n-1\big\}.
\]

First we  show that
\begin{equation}
\label{eq:diff_pi_u}
|\pi_{n+1}(t)_i-u_n(t)_i|\leq 2r^{-k_n(t)}\quad \mbox{ for }i\in I_{n+1}(t).
\end{equation}
To this aim we define $J'=\{0\}\cup\{n\geq 1\colon\ p_n(t)=1\}$. Then $\pi_n(t)=u_n(t)$ for $n\in J'$.
Fix $n$ and let $n'$ be the largest element of $J'$ such that $n'\leq n$. Then by (P5) $u_n(t)=u_{n'}(t)=\pi_{n'}(t)$
and $k_n(t)=k_{n'}(t)$. Therefore for $i\in I_{n+1}(t)$,
\begin{align*}
|\pi_{n+1}(t)_i-u_n(t)_i|&=|\pi_{n+1}(t)_i-\pi_{n'}(t)_i|\leq \sum_{q=n'}^n|\pi_{q+1}(t)_i-\pi_{q}(t)_i|
\\
&\leq \sum_{j\geq j_{n'}(t)}r^{-j}
\leq 2r^{-j_{n'}(t)}\leq 2r^{-k_{n'}(t)}=2r^{-k_n(t)}.
\end{align*}

Now we inductively show that $I_n(t)\subset J_n(t)$. For $n=0$ both sets equals $I$. If $p_{n+1}(t)\neq 1$
then $I_{n+1}(t)\subset I_n(t)\subset J_{n}(t)=J_{n+1}(t)$ and if $p_{n+1}(t)=1$ then 
$\pi_{n+1}(t)=u_{n+1}(t)$ so by \eqref{eq:diff_pi_u}, $|u_{n+1}(t)-u_n(t)|\leq 2r^{-k_n(t)}$ for
$i\in I_{n+1}(t)$ hence by (P6) and the induction assumption $I_{n+1}(t)\subset J_{n+1}(t)$.

Finally assume
that $A\in {\cal A}_n$, $j_n(A)>j_{n-1}(A')$ and $t\in A$. Then $p_{n-1}(A')=0$, $t,\pi_{n}(A)\in A'$, 
$I_n(A)\subset J_n(A)\subset J_{n-1}(A')$ and $|\pi_n(A)_i-u_{n-1}(A')_i|\leq 2r^{-k_{n-1}(A')}$ for
$i\in I_n(A)$. Hence Lemma \ref{lem:estd} (applied with $J=I_n(A)$, $u=u_{n-1}(A')$, $s=\pi_n(A)$, 
$j=j{n-1}(A')$ and $k=k_{n-1}(A')$) and (P3) yield
\begin{align*}
\sum_{i\in I_n(A)}\min\{&(t_i-\pi_n(A)_i)^2,r^{-2j_n(A)}\}
\leq \sum_{i\in I_n(A)}\min\{(t_i-\pi_n(A)_i)^2,r^{-2j_{n-1}(A')}\}
\\
&\leq
2\Delta(A',J_{n-1}(A'),u_{n-1}(A'),j_{n-1}(A'),k_{n-1}(A'))^2
\\
&\leq 2^{n}r^{-2j_{n-1}(A')}\leq r^42^{n}r^{-2j_n(A)}.
\end{align*}

Therefore all assumptions of Theorem \ref{th:part} are satisfied with $M=r^4$ and Theorem \ref{th:BC}
follows by \eqref{eq:estdec} and \eqref{eq:estsum} (since $r^{-j_0}=1=4b(T)$).

\end{proof}

\begin{proof}[Proof of Theorem \ref{cor:conv}]
By Theorem \ref{th:BC} we know that $T\subset T_1+T_2$ with $\sup_{t\in T_1}\|t\|_1\leq Lb(T)$ and $g(T_2)\leq Lb(T)$.
Then
\[
T-T\subset (T_1-T_1)+(T_2-T_2)\subset \mathrm{conv}\{2(T_1-T_1),2(T_2-T_2)\}.
\]
Obviously $T_1-T_1\subset L\overline{\mathrm{conv}}\{e_i\colon\ i\in I\}$, where $(e_i)_{i\in I}$ is the canonical
basis of $\ell^2(I)$.
The majorizing measure theorem for Gaussian processes implies (cf. \cite[Theorem 2.1.8]{Tab1}) that we can find vectors
$(s^n)_{n\geq 1}$ in $\ell^2$ such that
$T_2-T_2\subset \overline{\mathrm{conv}}\{s^n\colon\ n\geq 1\}$ and 
$\sqrt{\log(n+1)}\|s_n\|_{2}\leq Lg(T_2)\leq Lb(T)$. To finish the proof it is enough to notice that
$\|X_{e_i}\|_p=\|\ve_i\|_p= 1$ for any $p>0$ and that by Khinthine's inequality
$\|X_t\|_p\leq L\sqrt{p}\|t\|_2$ for $p\geq 1$.
\end{proof}

\section{Selected Applications}
\label{sec:appl}


The Bernoulli Conjecture was motivated by the following question of X. Fernique concerning  random Fourier series.
Let $G$ be a compact Abelian group and $(F,\|\ \|)$ be a complex Banach space. Consider (finitely many) vectors
$v_i\in F$ and characters $\chi_i$ on $G$. X.\ Fernique \cite{Fe2} showed that
\[
\Ex\sup_{h\in G}\Big\|\sum_{i}v_ig_i\chi_i(h)\Big\|
\leq L\Big(\Ex\Big\|\sum_{i}v_ig_i\Big\|+\sup_{\|x^*\|\leq 1}\Ex\sup_{h\in G}\Big|\sum_{i}x^*(v_i)g_i\chi_i(h)\Big|\Big)
\]
and asked whether similar bound holds if one replaces Gaussian r.v's by random signs. Theorem \ref{th:BC} yields
an affirmative answer.

\begin{thm}
\label{th:Fe}
For any compact Abelian group $G$ any finite collection of vectors $v_i$ in a complex Banach space $(F,\|\ \|)$
and characters $\chi_i$ on $G$ we have
\[
\Ex\sup_{h\in G}\Big\|\sum_{i}v_i\ve_i\chi_i(h)\Big\|
\leq L\Big(\Ex\Big\|\sum_{i}v_i\ve_i\Big\|+\sup_{\|x^*\|\leq 1}\Ex\sup_{h\in G}\Big|\sum_{i}x^*(v_i)\ve_i\chi_i(h)\Big|\Big).
\]
\end{thm}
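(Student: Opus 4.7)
Denote the three quantities appearing in the statement by $S$ (the left-hand side), $A$ and $B$ respectively. I will express $S$ as the expected supremum of a Bernoulli process over a structured index set, then use Theorem~\ref{th:BC} to split the vectors $v_i$ into a ``sign-bounded'' part (handled pointwise by $A$) and a ``Gaussian-bounded'' part (handled via Fernique's original Gaussian Fourier inequality \cite{Fe2}). Parametrising by $(h,x^*,\theta)\in G\times B_{F^*}\times[0,2\pi)$ and setting
\[
t(h,x^*,\theta)_i := \operatorname{Re}\bigl(e^{i\theta}x^*(v_i)\chi_i(h)\bigr),\qquad T := \{t(h,x^*,\theta)\}\subset\ell^2(I),
\]
one has $S = b(T)$, while $A=b(T^A)$ for the slice $T^A := \{t(e_G,x^*,\theta)\}$ and $B=\sup_{x^*}b(T^{x^*})$ for the slices $T^{x^*} := \{t(h,x^*,\theta)\}$.

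Embed $F\hookrightarrow\ell^\infty(B_{F^*})$ isometrically via $v\mapsto(x^*(v))_{x^*}$, and apply Theorem~\ref{th:BC} to the index set $T^A$. As explained in the paragraph following Kwapie\'n's problem, this produces a decomposition $v_i=v_i^1+v_i^2$ in the ambient $\ell^\infty(B_{F^*})$ (the vectors $v_i^{l}$ need not lie in $F$) with
\[
\sup_{\eta_i=\pm1}\bigl\|{\textstyle\sum_i} v_i^1\eta_i\bigr\|_\infty\le LA,\qquad
\Ex\bigl\|{\textstyle\sum_i} v_i^2 g_i\bigr\|_\infty\le LA.
\]

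For the $v_i^1$ part, $|\ve_i\chi_i(h)|=1$ for every $i,h$, so by convexity/contraction
\[
\sup_h\|{\textstyle\sum_i}v_i^1\ve_i\chi_i(h)\|_\infty\le\sup_{|\lambda_i|\le1}\|{\textstyle\sum_i} v_i^1\lambda_i\|_\infty\le L\sup_{\eta_i=\pm1}\|{\textstyle\sum_i} v_i^1\eta_i\|_\infty\le LA
\]
pointwise. For the $v_i^2$ part, Jensen's inequality (cf.~\eqref{eq:gaussdom}) replaces $\ve_i$ by $g_i$ at the cost of a universal constant, and Fernique's Gaussian Fourier inequality applied in the Banach space $\ell^\infty(B_{F^*})$ yields
\[
\Ex\sup_h\|{\textstyle\sum_i}v_i^2\ve_i\chi_i(h)\|_\infty\le L\Bigl(\Ex\|{\textstyle\sum_i}v_i^2 g_i\|_\infty+\sup_{y^*}\Ex\sup_h\bigl|{\textstyle\sum_i}y^*(v_i^2)g_i\chi_i(h)\bigr|\Bigr),
\]
the first term being $\le LA$ by construction.

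The main obstacle is that the supremum on the right is taken over the full unit ball of $\ell^\infty(B_{F^*})^*$, which is much larger than $B_{F^*}$ itself, and hence cannot be directly identified with $B$. To overcome this I plan a second application of Theorem~\ref{th:BC}, this time slicewise to each scalar process indexed by $T^{x^*}$: one uses the multiplicative Fourier structure $u_i(h,x^*)=x^*(v_i)\chi_i(h)$ to reduce the $y^*$-supremum to the original $x^*$-supremum and simultaneously to convert the scalar Gaussian Fourier supremum into its Bernoulli counterpart, so that the second term is bounded by $LB$. The delicate bookkeeping of constants in the two successive applications of Theorem~\ref{th:BC}, and in particular ensuring that ``extra'' dual functionals do not enlarge the right-hand side, is the technical heart of the proof.
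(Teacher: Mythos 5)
Your setup and the first half of your argument coincide with the paper's: scalarize via the dual ball, apply Theorem \ref{th:BC} to the set of coefficient sequences $T^A=\{(\operatorname{Re}(e^{i\theta}x^*(v_i)))_i\}$ (whose Bernoulli supremum is exactly $A$), and kill the $\ell^1$-bounded part pointwise. The genuine gap is in the treatment of the Gaussian part, and it is precisely the obstacle you flag yourself without resolving. Applying Fernique's inequality in the Banach space $\ell^\infty(B_{F^*})$ produces a supremum over the unit ball of $\ell^\infty(B_{F^*})^*$, which cannot be identified with $B$; and your proposed repair --- ``a second application of Theorem \ref{th:BC}'' --- is not carried out and would not work as described. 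Theorem \ref{th:BC} bounds a Bernoulli supremum from above by a Gaussian term plus an $\ell^1$ term, so it goes in the wrong direction for converting the scalar Gaussian Fourier supremum $\Ex\sup_h|\sum_i t_i g_i\chi_i(h)|$ into its Bernoulli counterpart (note that $g(T_1)$ is not controlled by $\sup_{t\in T_1}\|t\|_1$). That conversion is the Marcus--Pisier theorem, which exploits stationarity and is not a consequence of the decomposition theorem.

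The paper's route around the obstacle is different. One does not apply Fernique's inequality to the ambient Banach space at all; one applies it to the doubly-indexed scalar process $(h,t^2)\mapsto\sum_i t_i^2 g_i\chi_i(h)$ with $t^2$ ranging over the set $T_2$ produced by Theorem \ref{th:BC}, so that the ``weak'' term on the right-hand side is $\sup_{t^2\in T_2}\Ex\sup_h|\sum_i t_i^2 g_i\chi_i(h)|$ --- a supremum over $T_2$ itself, not over any dual ball. Then (i) the Marcus--Pisier estimate replaces $g_i$ by $\ve_i$ inside this scalar supremum; and (ii) since one may arrange $T_2\subset T-T_1$, the supremum over $t^2\in T_2$ is at most the supremum over $t\in T$ (which is the quantity $B$) plus the supremum over $t^1\in T_1$ (which is at most $LA$ by the pointwise $\ell^1$ bound). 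Your sketch is missing all three of these ingredients --- indexing Fernique's inequality by $T_2$, the Marcus--Pisier comparison, and the containment $T_2\subset T-T_1$ --- and they are exactly what is needed to close the gap you identified.
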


\noindent
{\bf Remark.} Since $\chi_i(e)=1$, where $e$ is the neutral element of $G$ we have
\[
\max\Big\{\Ex\Big\|\sum_{i}v_i\ve_i\Big\|,\sup_{\|x^*\|\leq 1}\Ex\sup_{h\in G}\Big|\sum_{i}x^*(v_i)\ve_i\chi_i(h)\Big|\Big\}
\leq \Ex\sup_{h\in G}\Big\|\sum_{i}v_i\ve_i\chi_i(h)\Big\|.
\] 
Therefore Theorem \ref{th:Fe} gives a two-sided bound on $\Ex\sup_{h\in G}\|\sum_{i}v_i\ve_i\chi_i(h)\|$.

\begin{proof}[Proof of Theorem \ref{th:Fe}] We need to show that for any bounded set $T\subset \mathbb{C}^n$, $n<\infty$,
\begin{equation}
\label{eq:Fer1}
\Ex\sup_{h\in G,t\in T}\Big|\sum_{i=1}^n t_i\ve_i\chi_i(h)\Big|\leq
L\Big(\Ex\sup_{t\in T}\Big|\sum_{i=1}^n t_i\ve_i\Big|+\sup_{t\in T}\Ex\sup_{h\in G}\Big|\sum_{i=1}^nt_i\ve_i\chi_i(h)\Big|\Big).
\end{equation}
Let $M:=\Ex\sup_{t\in T}|\sum_{i=1}^n t_i\ve_i|$.
Theorem \ref{th:BC} implies that we can find a decomposition $T\subset T_1+T_2$, with $\sup_{t^1\in T_1}\|t^1\|_1\leq LM$ and
\[
\Ex\sup_{t^2\in T_2}\Big|\sum_{i=1}^n t_i^2g_i\Big|\leq LM.
\] 
Obviously
\begin{align}
\notag
\Ex\sup_{h\in G,t\in T}&\Big|\sum_{i=1}^n t_i\ve_i\chi_i(h)\Big|
\\
\label{eq:Fer2}
&\leq
\Ex\sup_{h\in G,t^1\in T_1}\Big|\sum_{i=1}^n t_i^1\ve_i\chi_i(h)\Big|+
\Ex\sup_{h\in G,t^2\in T_2}\Big|\sum_{i=1}^n t_i^2\ve_i\chi_i(h)\Big|.
\end{align}
Since $|\sum_{i=1}^n t_i^1\ve_i\chi_i(h)|\leq \sum_{i=1}^n|t_i^1||\chi_i(h)|=\|t^1\|_1$ we get
\begin{equation}
\label{eq:Fer3}
\Ex\sup_{h\in G,t^1\in T_1}\Big|\sum_{i=1}^n t_i^1\ve_i\chi_i(h)\Big|\leq \sup_{t\in T^1}\|t^1\|_1\leq LM.
\end{equation}
Estimate \eqref{eq:gaussdom} and Fernique's theorem imply
\begin{align}
\notag
\Ex\sup_{h\in G,t^2\in T_2}\Big|\sum_{i=1}^n &t_i^2\ve_i\chi_i(h)\Big|
\leq \sqrt{\frac{\pi}{2}}\Ex\sup_{h\in G,t^2\in T_2}\Big|\sum_{i=1}^n t_i^2 g_i\chi_i(h)\Big|
\\
\label{eq:Fer4}
&\leq L\Big(\Ex\sup_{t^2\in T_2}\Big|\sum_{i=1}^n t_i^2g_i\Big|
+\sup_{t^2\in T_2}\Ex\sup_{h\in G}\Big|\sum_{i=1}^nt_i^2 g_i\chi_i(h)\Big|\Big).
\end{align}
The Marcus-Pisier estimate \cite{MarP} yields for any $t^2\in T_2$,
\begin{equation}
\label{eq:Fer5}
\Ex\sup_{h\in G}\Big|\sum_{i=1}^nt_i^2 g_i\chi_i(h)\Big|\leq 
L\Ex\sup_{h\in G}\Big|\sum_{i=1}^nt_i^2 \ve_i\chi_i(h)\Big|.
\end{equation}
Since we may assume that $T_2\subset T-T_1$ we get
\begin{align}
\notag
\sup_{t^2\in T_2}\Ex\sup_{h\in G}&\Big|\sum_{i=1}^nt_i^2 \ve_i\chi_i(h)\Big|
\\
\notag
&\leq \sup_{t\in T}\Ex\sup_{h\in G}\Big|\sum_{i=1}^nt_i \ve_i\chi_i(h)\Big|
+\sup_{t^1\in T_1}\Ex\sup_{h\in G}\Big|\sum_{i=1}^nt_i^1 \ve_i\chi_i(h)\Big|
\\
\label{eq:Fer6}
&\leq \sup_{t\in T}\Ex\sup_{h\in G}\Big|\sum_{i=1}^nt_i \ve_i\chi_i(h)\Big|+LM.
\end{align}
Estimate \eqref{eq:Fer1} follows by \eqref{eq:Fer2}-\eqref{eq:Fer6}.
\end{proof}


Another consequence of Theorem \ref{th:BC} is a Levy-Ottaviani type maximal 
inequality for VC-classes (see \cite{La1} for details). Recall that a class ${\cal C}$ of subsets of $I$ is called a 
Vapnik-Chervonenkis class
(or in short a VC-class) of order at most $d$ if for any set $A\subset I$ of cardinality $d+1$ we have
$|\{C\cap A\colon\ C\in{\cal C}\}|<2^{d+1}$.

\begin{thm}
\label{th:maxVC}
Let $(X_i)_{i\in I}$ be independent random variables in a separable Banach space $(F,\| \cdot\|)$ such that 
$|\{i\colon\ X_i\neq 0\}|<\infty$ a.s. and ${\cal C}$ be a countable VC-class of subsets of $I$ of order $d$. 
Then
\[
\Pr\Big(\sup_{C\in {\cal C}}\Big\|\sum_{i\in C}X_i\Big\|\geq u\Big)
\leq 
K(d)\sup_{C\in {\cal C}\cup\{I\}}\Pr\Big(\Big\|\sum_{i\in C}X_i\Big\|\geq \frac{u}{K(d)}\Big)\quad \mbox{ for }u>0,
\] 
where $K(d)$ is a constant that depends only on $d$. Moreover if the variables $X_i$ are symmetric then
\[
\Pr\Big(\sup_{C\in {\cal C}}\Big\|\sum_{i\in C}X_i\Big\|\geq u\Big)
\leq 
K(d)\Pr\Big(\Big\|\sum_{i\in I}X_i\Big\|\geq \frac{u}{K(d)}\Big)\quad \mbox{ for }u>0.
\] 
\end{thm}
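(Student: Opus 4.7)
The proof proceeds in three stages: desymmetrization, a randomization reduction to an expected-value inequality, and the core expected-value bound derived from Theorem \ref{th:BC}.

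\emph{Stage 1 (Desymmetrization).} I would first reduce to the case where the $X_i$ are symmetric. Introducing an independent copy $(X_i')_{i \in I}$ so that $X_i - X_i'$ is symmetric, a standard Levy-type argument comparing medians of partial sums with zero derives the general case from the symmetric one, at the cost of the extra factor $\sup_{C \in \mathcal{C} \cup \{I\}} \Pr(\|\sum_{i \in C} X_i\| \geq u/K(d))$ that absorbs the uncontrolled medians. In the symmetric case, Levy's inequality for symmetric Banach-space-valued sums further collapses this supremum to the single probability at $C = I$.

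\emph{Stage 2 (Randomization and reduction to expectations).} For symmetric $X_i$, invoking the distributional identity $(X_i) \stackrel{d}{=} (\ve_i X_i)$ for fresh independent random signs $\ve_i$ and conditioning on $(X_i)$, the problem reduces to showing that for every deterministic $(Y_i) \in F^I$,
\[
\Ex \sup_{C \in \mathcal{C}} \Big\|\sum_{i \in C} \ve_i Y_i\Big\| \leq K(d)\, \Ex \Big\|\sum_{i \in I} \ve_i Y_i\Big\|. \qquad (\star)
\]
The passage from $(\star)$ to a probability bound is standard: the Bernoulli concentration inequality (Theorem \ref{th:concBern}) concentrates the LHS around its mean, while the Paley-Zygmund inequality applied to $\|\sum_i \ve_i Y_i\|$ provides a universal lower bound on $\Pr(\|\sum_i \ve_i Y_i\| \geq c\, \Ex \|\sum_i \ve_i Y_i\|)$.

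\emph{Stage 3 (Proof of $(\star)$).} By duality in $F$, the LHS of $(\star)$ equals $b(T)$ for the index set $T := \{(1_C(i) f(Y_i))_{i \in I} : C \in \mathcal{C},\ \|f\|_{F^*} \leq 1\} \subset \ell^2(I)$, and the RHS equals $b(T_0)$ for $T_0 := \{(f(Y_i))_i : \|f\|_{F^*} \leq 1\}$. Apply Theorem \ref{th:BC} to $T$ to produce a decomposition $T \subset T_1 + T_2$ with $\sup_{t \in T_1} \|t\|_1 \leq L b(T)$ and $g(T_2) \leq L b(T)$. The VC hypothesis enters combinatorially through the classical uniform-entropy bound for VC-classes of order $d$, which controls the $\ell^2$-covering numbers of $T$ relative to those of $T_0$ by a polynomial factor with exponent $d$. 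Plugging this into Gaussian chaining (applied to the $T_2$ piece), and using the contraction principle (Theorem \ref{th:contr}) combined with Sudakov minoration (Theorem \ref{th:SudBern}) to compare the $T_1$-contribution with $b(T_0)$, yields $b(T) \leq K(d) b(T_0)$, which is $(\star)$. The main obstacle is precisely this last step: a direct Dudley-type chaining bound on the Bernoulli process $b(T)$ would incur logarithmic losses incompatible with a clean $K(d)$ bound, and the Bernoulli Conjecture is indispensable here because it separates the ``no-cancellation'' $\ell^1$ contribution from the Gaussian-chainable one, each of which is then tractable by standard VC-class machinery.
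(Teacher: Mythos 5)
Your Stages 1--2 (reduction to finite $I$, symmetric $X_i=v_i\ve_i$, and then to an expected-value inequality via concentration) match the paper's reduction. But Stage 3 has a genuine gap: you apply Theorem \ref{th:BC} to the \emph{enlarged} set $T=\{(1_C(i)f(Y_i))_i\}$, whose $b$-value is exactly the left-hand side of $(\star)$. The resulting decomposition only gives bounds of the form $\sup_{t\in T_1}\|t\|_1\leq Lb(T)$ and $g(T_2)\leq Lb(T)$, i.e.\ everything is expressed in terms of the quantity you are trying to bound, and there is no mechanism to convert $Lb(T)$ into $K(d)b(T_0)$. The subsequent appeal to ``uniform-entropy bounds for VC classes plugged into Gaussian chaining, plus contraction and Sudakov'' does not close this circle: covering-number comparisons control $\gamma_2(T)$ at best up to logarithmic losses (as you yourself note), and $\gamma_2(T_0)$ is in general \emph{not} bounded by $Lb(T_0)$ --- that failure is precisely why the Bernoulli Conjecture is needed in the first place.

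The correct move, which is the paper's, is to apply Theorem \ref{th:BC} to the \emph{small} set $T_0=\{(f(Y_i))_i:\|f\|_{F^*}\leq 1\}$, obtaining $T_0\subset T_1+T_2$ with $\sup_{t^1\in T_1}\|t^1\|_1\leq Lb(T_0)$ and $g(T_2)\leq Lb(T_0)$. Then the two pieces are handled by entirely different devices, neither of which is the one you propose: (i) the $T_1$-contribution is killed by the pointwise bound $|\sum_{i\in C}t^1_i\ve_i|\leq\|t^1\|_1$, which is automatically uniform over \emph{all} subsets $C$ --- no VC property, contraction, or Sudakov minoration is needed here; (ii) the $T_2$-contribution is first dominated by the corresponding Gaussian process via \eqref{eq:gaussdom}, and then one invokes Krawczyk's maximal inequality for Gaussian vectors over VC classes, which yields $\Ex\sup_{C}\sup_{t^2\in T_2}|\sum_{i\in C}t^2_ig_i|\leq K(d)g(T_2)$. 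This Gaussian maximal inequality is the place where the VC dimension $d$ genuinely enters, and it is a nontrivial external result, not something recovered from covering numbers and chaining alone. (The paper also notes an alternative route via Corollary \ref{cor:conv} and the scalar maximal inequality.) As written, your Stage 3 does not produce the inequality $b(T)\leq K(d)b(T_0)$.
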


It is easy to see (taking $F=\er$, $X_i=\ve_i v$ for $i\in I_0$ and $X_i=0$ otherwise, where $I_0$ is a finite subset of $I$
and $v$ is any nonzero vector in $F$)
that being a VC-class is a necessary assumption even in the scalar case. 

Maximal inequalities of this type may be used to derive It\^o-Nisio type theorems reducing almost sure statements 
to statements in probability and as a consequence obtain various limit type theorems for VC-classes. As an example
of application we present a uniform Strong Law of Large Numbers.

\begin{cor}
Let $(X_i)_{i\geq 1}$ be independent symmetric r.v's with values in a separable Banach space $(F,\|\ \|)$ such
that $\frac{1}{a_n}\sum_{i=1}^n X_i \rightarrow 0$ a.s.. Then for any VC-class  ${\cal C}$ of subsets of $\en$ we have
\[
\lim_{n\rightarrow\infty}\frac{1}{a_n}\max_{C\in {\cal C}}\Big\|\sum_{i\in C\cap \{1,\ldots,n\}}X_i\Big\|=0 
\mbox{ a.s..}
\]
\end{cor}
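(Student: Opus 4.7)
The approach is to combine the symmetric version of Theorem \ref{th:maxVC} with a dyadic blocking argument and the two Borel--Cantelli lemmas. Set $T_n(C):=\sum_{i\in C\cap\{1,\dots,n\}}X_i$, $V_n:=\max_{C\in{\cal C}}\|T_n(C)\|$ and $Z_n:=\sum_{i\leq n}X_i$; the task is to show $V_n/a_n\to 0$ a.s. Replacing $a_n$ by $\max_{k\leq n}a_k$ (which only strengthens the hypothesis) we may assume $a_n$ is nondecreasing; the case $\sup_n a_n<\infty$ is trivial, so we focus on $a_n\to\infty$ and pick integers $0=n_0<n_1<\dots$ with $2^k\leq a_{n_k}<2^{k+1}$. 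Write $I_k:=(n_k,n_{k+1}]$ so that $a_{n_j}/a_{n_k}\leq 2^{j+1-k}$ whenever $j\leq k$.

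For each $k$ introduce the trace class $\tilde{\cal C}_k:=\{C\cap(n_k,m]\colon C\in{\cal C},\ m\in I_k\}$ of subsets of $I_k$; this is the restriction to $I_k$ of the family of intersections of sets in ${\cal C}$ with initial segments of $\en$, the latter being a VC-class of order $1$. A standard Sauer-lemma calculation on intersections of VC-classes shows that $\tilde{\cal C}_k$ has VC-order bounded by some $d'=d'(d)$. With $R_k:=\max_{D\in\tilde{\cal C}_k}\|\sum_{i\in D}X_i\|$, the symmetric form of Theorem \ref{th:maxVC} applied to $(X_i)_{i\in I_k}$ and $\tilde{\cal C}_k$ gives
\[
\Pr(R_k\geq u)\leq K(d')\Pr(\|Z_{n_{k+1}}-Z_{n_k}\|\geq u/K(d')),\quad u>0,
\]
while the triangle inequality yields the pathwise bound $V_n\leq V_{n_k}+R_k$ for every $n\in I_k$.

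Since $\|Z_n\|/a_n\to 0$ a.s.\ and $a_{n_{k+1}}\leq 2a_{n_k}$, we obtain $\|Z_{n_{k+1}}-Z_{n_k}\|/a_{n_k}\to 0$ a.s.; the variables $Z_{n_{k+1}}-Z_{n_k}$ are independent across $k$, so the converse direction of the Borel--Cantelli lemma forces $\sum_k\Pr(\|Z_{n_{k+1}}-Z_{n_k}\|/a_{n_k}>\varepsilon)<\infty$ for every $\varepsilon>0$. Inserted into the displayed inequality and followed by the direct Borel--Cantelli lemma, this gives $R_k/a_{n_k}\to 0$ a.s. Iterating $V_{n_{k+1}}\leq V_{n_k}+R_k$ yields $V_{n_k}\leq V_{n_0}+\sum_{j<k}R_j$, and the geometric bound $a_{n_j}/a_{n_k}\leq 2^{j+1-k}$ together with $R_j/a_{n_j}\to 0$ a.s.\ produces $V_{n_k}/a_{n_k}\to 0$ a.s.\ by a straightforward weighted-Ces\`aro argument. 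Finally, for $n\in I_k$ we have $V_n/a_n\leq(V_{n_k}+R_k)/a_{n_k}\to 0$ almost surely, which is the required conclusion.

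The two main technical points are (i) the combinatorial bound showing that $\tilde{\cal C}_k$ remains a VC-class of order bounded in terms of $d$ alone, which is the known Sauer-lemma estimate for intersections of VC-classes applied to the intersection of ${\cal C}$ with the initial-segment class, and (ii) the observation that Theorem \ref{th:maxVC} by itself only produces convergence of $V_n/a_n$ in probability. It is precisely the independence across the blocks $I_k$ that, via the converse of Borel--Cantelli applied to the block increments $Z_{n_{k+1}}-Z_{n_k}$, provides the summability needed to upgrade the maximal inequality into the almost-sure conclusion.
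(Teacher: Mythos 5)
Your blocking argument is a genuinely different route from the paper's, and its core components are sound: the trace class $\tilde{\cal C}_k$ is indeed a VC-class of order bounded in terms of $d$ alone, the block-wise application of the symmetric form of Theorem \ref{th:maxVC} is legitimate, and the interplay of the two Borel--Cantelli lemmas with the independence of the block increments is exactly the right way to upgrade a maximal inequality to an almost sure statement along a subsequence. However, your very first reduction contains a genuine logical gap. Replacing $a_n$ by $a_n':=\max_{k\leq n}a_k$ does strengthen the hypothesis, but it simultaneously \emph{weakens the conclusion}: since $a_n'\geq a_n$, proving $V_n/a_n'\to 0$ does not give $V_n/a_n\to 0$. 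The corollary assumes no monotonicity of $(a_n)$, and for genuinely non-monotone $(a_n)$ your block construction collapses, because inside a block $I_k$ the value $a_n$ may be far smaller than the quantity $a_{n_k}$ against which you control $V_{n_k}+R_k$. A secondary, fixable point: even for nondecreasing $a_n\to\infty$ there need not exist $n_k$ with $a_{n_k}\in[2^k,2^{k+1})$ (the sequence can jump over a dyadic band, which also invalidates $a_{n_{k+1}}\leq 2a_{n_k}$); one should instead take $n_k:=\max\{n\colon a_n<2^{k+1}\}$ and compare all quantities to $2^k$ rather than to $a_{n_k}$ from below.

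The paper sidesteps both issues with a single application of Theorem \ref{th:maxVC} in a larger Banach space: fixing $n_0$, one defines $\ell^{\infty}(F)$-valued variables $Y_i$ with $Y_i(n)=a_n^{-1}X_i$ for $n\geq \max(i,n_0)$ and $Y_i(n)=0$ otherwise, so that $\big\|\sum_{i\in C}Y_i\big\|_{\ell^\infty(F)}=\sup_{n\geq n_0}a_n^{-1}\big\|\sum_{i\in C\cap\{1,\ldots,n\}}X_i\big\|$. The symmetric maximal inequality then gives
\[
\Pr\Big(\sup_{n\geq n_0}\frac{V_n}{a_n}\geq t\Big)\leq K\,\Pr\Big(\sup_{n\geq n_0}\frac{\|Z_n\|}{a_n}\geq \frac{t}{K}\Big),
\]
and letting $n_0\to\infty$ yields the almost sure conclusion for arbitrary positive $(a_n)$, with no blocking, no regularization of $(a_n)$, and no converse Borel--Cantelli. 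If you are willing to add the hypothesis that $(a_n)$ is nondecreasing and tends to infinity (the usual strong-law setting), then your argument, with the corrected choice of $n_k$, is a valid alternative proof; as written, it does not establish the corollary in the stated generality.
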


\begin{proof}
Let $n_0$ be a fixed positive integer. Then for any $A\subset \en$
\[
\max_{n\geq n_0}\frac{1}{a_n}\Big\|\sum_{i\in A\cap\{1,\ldots,n\}}X_i\Big\|=\Big\|\sum_{i\in A}Y_i\Big\|,
\]
where $Y_i$ are random variables in $\ell^{\infty}(F)$ given by $Y_i(n)=0$ for $n<n_0$ or $i>n$ and 
$Y_i(n)=X_i$ for $i\leq n\geq n_0$. Applying Theorem \ref{th:maxVC} to random variables $Y_i$ we get for any $t>0$,
\[
\Pr\Big(\max_{n\geq n_0}\frac{1}{a_n}\max_{C\in {\cal C}}\Big\|\sum_{i\in C\cap \{1,\ldots,n\}}X_i\Big\|\geq t\Big)
\leq K\Pr\Big(\max_{n\geq n_0}\frac{1}{a_n}\Big\|\sum_{i=1}^nX_i\Big\|\geq \frac{t}{K}\Big)
\]
where $K$ is a constant that depends only on ${\cal C}$ and the assertion easily follows.
\end{proof}


\begin{proof}[Sketch of the proof of Theorem \ref{th:Fe}] It is rather a standard exercise to reduce to the case when
$I$ is finite and $X_i=v_i\ve_i$ for some vectors $v_i\in F$.   
Using concentration properties of Bernoulli processes it is enough to show that for any bounded symmetric set 
$T\subset \er^I$ and any VC-class of order $d$,
\begin{equation}
\label{eq:VC1}
\Ex\sup_{C\in {\cal C}}\sup_{t\in T}\Big|\sum_{i\in C}t_i\ve_i\Big|\leq 
K(d)\Ex\sup_{t\in T}\Big|\sum_{i\in I}t_i\ve_i\Big|=K(d)b(T).
\end{equation}

Let $T\subset T_1+T_2$ be a decomposition given by Theorem \ref{th:BC}. We may also assume that $T_1$ and $T_2$ are symmetric.
Obviously 
$|\sum_{i\in C}t_i^1\ve_i\Big|\leq \sum_{i\in C}|t_i^1|\leq \|t^1\|_1$, hence
\begin{equation}
\label{eq:VC2}
\Ex\sup_{C\in {\cal C}}\sup_{t^1\in T_1}\Big|\sum_{i\in C}t_i^1\ve_i\Big|\leq \sup_{t^1\in T_1}\|t^1\|_1\leq Lb(T). 
\end{equation}
Inequality \eqref{eq:gaussdom} gives
\begin{equation}
\label{eq:VC3}
\Ex\sup_{C\in {\cal C}}\sup_{t^2\in T_2}\Big|\sum_{i\in C}t_i^2\ve_i\Big|\leq
\sqrt{\frac{\pi}{2}}\Ex\sup_{C\in {\cal C}}\sup_{t^2\in T_2}\Big|\sum_{i\in C}t_i^2g_i\Big|.
\end{equation}
The result of Krawczyk \cite{Kr} and the choice of $T_2$ yields
\begin{equation}
\label{eq:VC4}
\Ex\sup_{C\in {\cal C}}\sup_{t^2\in T_2}\Big|\sum_{i\in C}t_i^2g_i\Big|\leq K(d)g(T_2)\leq K(d)b(T).
\end{equation}
Estimates \eqref{eq:VC2}-\eqref{eq:VC4} imply \eqref{eq:VC1}.
\end{proof}

\noindent
{\bf Remark.} Alternatively one may prove \eqref{eq:VC1} using Corollary \ref{cor:conv} and the fact that maximal
inequalities hold for $F=\er$.

\section{Further Questions}
\label{sec:probl}

It is natural to ask for  bounds on suprema for another classes of stochastic processes.
The majorizing measure upper bound 
works in quite general situations, cf. \cite{Be}. 
Two-sided estimates are known however only in very few cases. For
``canonical processes'' of the form $X_t=\sum_{i\geq 1}t_iX_i$, where $X_i$ are independent centered r.v's  results in the 
spirit of Corollary \ref{cor:conv} were obtained for certain symmetric variables with log-concave tails \cite{Ta_canon,La0}.

A basic important class of canonical processes worth investigation is a class of ``selector processes" of the form
\[
X_t=\sum_{i\geq 1}t_i(\delta_i-\delta),\quad t\in \ell^2,
\]
where $(\delta_i)_{i\geq 1}$ are independent random variables such that $\Pr(\delta_i=1)=\delta=1-\Pr(\delta_i=0)$.
We may bound the quantity
\[
\delta(T):=\Ex\sup_{t\in T}\Big|\sum_{i\geq 1}t_i(\delta_i-\delta)\Big|,\quad T\subset \ell^2
\]
in two ways. 

First bound for $\delta(T)$ follows by a pointwise estimate. Namely let $(\delta_i')_{i\geq 1}$ be an independent copy of 
$(\delta_i)_{i\geq 1}$, then by Jensen's inequality,  
\[
\delta(T)\leq \Ex\sup_{t\in T}\Big|\sum_{i\geq 1}t_i(\delta_i-\delta_i')\Big|\leq
2\Ex\sup_{t\in T}\Big|\sum_{i\geq 1}t_i\delta_i\Big|\leq 2\Ex\sup_{t\in T}\sum_{i\geq 1}|t_i|\delta_i.
\]

Second estimate is based on chaining. To introduce it we define for $\alpha>0$ and a metric space $(T,d)$,
\[
\gamma_{\alpha}(T,d):=\inf\sup_{t\in T}\sum_{n=0}^{\infty}2^{n/\alpha}\Delta(A_n(t)),
\]
where as in the definition of $\gamma_2$ the infimum runs over all admissible sequences of partitions
$({\cal A}_n)_{n\geq 0}$ of the set $T$.
Bernstein's inequality implies that for $X_t=\sum_{i\geq 1}t_i(\delta_i-\delta)$ and $\delta\in (0,1/2]$ we have
\[
\Pr(|X_t-X_s|\geq u)\leq 2\exp\Big(-\min\Big\{\frac{u^2}{L\delta d_2(s,t)^2},\frac{u}{Ld_\infty(s,t)}\Big\}\Big)
\quad \mbox{for }s,t\in \ell^2,
\]
where $d_p(t,s):=\|t-s\|_p$ denotes the $\ell^p$-distance.
This together with a chaining argument \cite[Theorem 1.2.7]{Tab1} yields 
\[
\delta(T)\leq L(\sqrt{\delta}\gamma_2(T,d_2)+\gamma_1(T,d_\infty)).
\]

The next conjecture, formulated by M.\ Talagrand \cite{Ta_ECM}, states that there are no other ways to bound
$\delta(T)$ as the combination of the above two estimates and the fact that $\delta(T_1+T_2)\leq \delta(T_1)+\delta(T_2)$.

\begin{conj}
\label{conj:delta}
Let $0<\delta\leq 1/2$, $\delta_i$ be independent random variables such that $\Pr(\delta_i=1)=\delta=1-\Pr(\delta_i=0)$
and $\delta(T):=\Ex\sup_{t\in T}|\sum_{i\geq 1}t_i(\delta_i-\delta)|$ for $T\subset \ell^2$. Then for any set $T$ with 
$\delta(T)<\infty$ one may find a decomposition $T\subset T_1+T_2$ such that
\[
\Ex\sup_{t\in T_1}\sum_{i\geq 1}|t_i|\delta_i\leq L\delta(T),\ \ 
\sqrt{\delta}\gamma_2(T_2,d_2)\leq L\delta(T),\ \ \mbox{ and }\ \  \gamma_1(T_2,d_\infty)\leq L\delta(T).
\]
\end{conj}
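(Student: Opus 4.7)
The plan is to mimic the overall architecture of the proof of Theorem \ref{th:BC}, while adapting each ingredient to the selector case. First, I would prove an analog of Theorem \ref{th:part} that reduces the desired decomposition $T\subset T_1+T_2$ to the construction of an admissible sequence $(\cala_n)_{n\geq 0}$ of partitions of $T$ together with centers $\pi_n(A)$ and \emph{two} sequences of integer scale parameters $j_n(A)$ and $\ell_n(A)$ controlling respectively the $\ell^2$-diameter and the $\ell^\infty$-diameter of the pieces, in the spirit of condition ii)b) there. The part $T_1:=\{t-\pi(t)\colon t\in T\}$ is to be bounded pointwise via the selector-weighted $\ell^1$ seminorm $\sum_{i\geq 1}|t_i-\pi(t)_i|\delta_i$, so a Bernstein-type pointwise argument plays the role of the trivial estimate $|X_t|\leq\|t\|_1$ used in the Bernoulli case, and $T_2:=\{\pi(t)\colon t\in T\}$ will be forced to satisfy both $\sqrt{\delta}\gamma_2(T_2,d_2)\leq Lb(T)$ and $\gamma_1(T_2,d_\infty)\leq Lb(T)$ by the two scale parameters respectively.

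Second, I would assemble the analogs of the ingredients of Section \ref{sec:est}. Bernstein's inequality plays the role of the concentration Theorem \ref{th:concBern}. One also needs a Sudakov-type lower bound for selector processes: if vectors $t_1,\ldots,t_m$ satisfy $\|t_l-t_{l'}\|_2\geq a$ and $\|t_l-t_{l'}\|_\infty\leq b$, then $\Ex\sup_{l\leq m}\sum_i t_{l,i}(\delta_i-\delta)$ should be at least $L^{-1}\min\{a\sqrt{\delta\log m},a\log m/b\}$, reflecting the two regimes of the Bernstein tail. From the concentration and this minoration, a growth condition and a greedy decomposition analogous to Corollary \ref{cor:greedy} can be extracted, but producing pieces small in \emph{both} distances simultaneously.

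Third --- and this is the cornerstone of the project --- one has to prove an analog of the key Proposition \ref{prop:prop1}: given $J\subset I$ and a set $T$ whose $d_J$-diameter is small, one should be able to cover $T$ by at most $m$ balls of $d_I$-radius $\sigma$ together with a remainder on which the restricted selector functional $\Ex\sup_{t\in T}\sum_{i\in J}t_i(\delta_i-\delta)$ has dropped by a quantitative amount involving both $\sqrt{\delta}\sigma\sqrt{\log m}$ and $\sigma\log m/b$. The proof should follow the same blueprint: take $m$ independent copies of the $J$-indexed selector process, pick $t_k\in T_k=T\setminus\bigcup_{l<k}B_I(t_l,\sigma)$ greedily so that $Y^{(k)}_{t_k}$ is nearly supremal on $T_k$, then apply the selector Sudakov to the $(I\setminus J)$-coordinates where the $t_k$ are pairwise at distance $\geq\sigma/2$.

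Fourth, one would introduce functionals $F(T,J,u,k,j,\ell)$ carrying chopping parameters in both the $d_2$ scale (index $j$) and the $d_\infty$ scale (index $\ell$), prove a decomposition lemma analogous to Corollary \ref{cor:maindec} whose output pieces now branch into several types according to which of the two diameters shrinks, and run the inductive partition construction of Section \ref{sec:constr} with parallel counters $p_n(A)$ and $q_n(A)$ for the two scales. The main obstacle I expect --- and the reason the conjecture remains open --- is precisely the coordination of two genuinely different geometries: in the Bernoulli proof the single $\ell^2$ scale drives every piece of the bookkeeping, while here $d_2$ shrinks like $2^{-n/2}$ along the chain but $d_\infty$ shrinks like $2^{-n}$, so the functional, the decomposition lemma and the counter construction must all be redesigned to respect both rates simultaneously. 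Finding the right balance, and in particular proving a version of Proposition \ref{prop:prop1} that tracks both distances rather than a single one, is the serious technical hurdle.
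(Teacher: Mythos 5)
The statement you are addressing is Conjecture \ref{conj:delta}, which the paper explicitly presents as an \emph{open problem} (due to Talagrand); the paper offers no proof of it, only the remark that the case $\delta=1/2$ follows from Theorem \ref{th:BC}. Your submission is, by your own admission, a research plan rather than a proof: every ingredient you list --- the two-scale analog of Theorem \ref{th:part}, the selector Sudakov minoration with the bound $L^{-1}\min\{a\sqrt{\delta\log m},\,a\log m/b\}$, the two-distance analog of Proposition \ref{prop:prop1}, the doubly-indexed functionals $F(T,J,u,k,j,\ell)$ and the parallel counters --- is stated as something that ``should'' or ``would'' be proved, and none of it is actually established. The final paragraph concedes that the central technical step (a version of Proposition \ref{prop:prop1} that simultaneously controls $d_2$, which contracts like $2^{-n/2}$ along the chain, and $d_\infty$, which must contract like $2^{-n}$ to make $\gamma_1$ finite) is precisely the unresolved obstacle. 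So there is no gap to localize beyond the obvious one: the argument has not been carried out, and the conjecture remains open.

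That said, your diagnosis of the difficulty is accurate and consistent with the discussion in Section \ref{sec:probl} and in Talagrand's work cited there: the chaining upper bound $\delta(T)\leq L(\sqrt{\delta}\gamma_2(T,d_2)+\gamma_1(T,d_\infty))$ genuinely mixes two metrics with different decay rates, and the Bernoulli machinery of Sections \ref{sec:est}--\ref{sec:constr} is built around a single $\ell^2$ scale (with the $\ell^\infty$ control entering only through the chopping maps, not as an independent chaining scale). If you want a result you can actually prove, the one concrete statement available is the $\delta=1/2$ case: there $\delta_i-\tfrac12=\tfrac12\ve_i$, so $\delta(T)=\tfrac12\,\Ex\sup_{t\in T}|\sum_i t_i\ve_i|$, Theorem \ref{th:BC} gives $T\subset T_1+T_2$ with $\sup_{t\in T_1}\|t\|_1\leq L\delta(T)$ and $\gamma_2(T_2,d_2)\leq L\delta(T)$, and the $\gamma_1(T_2,d_\infty)$ bound follows because $d_\infty\leq d_2$ and $\gamma_1$ built from the $\gamma_2$ partition at a shifted index can be controlled by $\gamma_2(T_2,d_2)$ only after further work --- even this reduction requires an argument, which your proposal does not supply.
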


It may be showed that for $\delta=1/2$ the above conjecture follows from Theorem \ref{th:BC}. 

Since any mean zero random variable is a mixture of mean zero two-points random variables selector processes are
strictly related to empirical processes 
\[
Z_f:=\frac{1}{\sqrt{N}}\sum_{i\leq N}(f(X_i)-\Ex f(X_i)),\quad f\in {\cal F},
\]
where $(X_i)_{i\leq N}$ are i.i.d. random variables and ${\cal F}$ is a class of measurable functions. 
Let 
\[
S_N({\cal F}):=\Ex\sup_{f\in {\cal F}}|Z_f|=
\frac{1}{\sqrt{N}}\Ex\sup_{f\in {\cal F}}\Big|\sum_{i\leq N}(f(X_i)-\Ex f(X_i))\Big|.
\]
As for selector processes there are two distinct ways to bound $S_N({\cal F})$. The first one is to use the
trivial pointwise bound $|\sum_{i\leq N}f(X_i)|\leq \sum_{i\leq N}|f(X_i)|$. The second is based on chaining and 
Bernstein's inequality
\begin{equation}
\label{eq:Ber}
\Pr\Big(\Big|\sum_{i\leq N}(f(X_i)-\Ex f(X_i))\Big|\geq t\Big)
\leq 
2\exp\Big(-\min\Big\{\frac{t^2}{4N\|f\|_2^2},\frac{t}{4\|f\|_{\infty}}\Big\}\Big),
\end{equation}
where $\|f\|_p$ denotes the $L_p$ norm of $f(X_i)$. Similar chaining arguments as in the case of selector processes
give 
\[
S_N({\cal F})\leq L\Big(\gamma_2({\cal F}_2,d_2)+\frac{1}{\sqrt{N}}\gamma_1({\cal F}_2,d_\infty)\Big),
\] 
where $d_p(f,g):=\|f-g\|_p$.

The following conjecture asserts that there are no other ways to bound suprema of empirical processes.

\begin{conj}
\label{conj:emp}
Suppose that ${\cal F}$ is a countable class of measurable functions. Then one can find 
a decomposition ${\cal F}\subset {\cal F}_1+{\cal F}_2$ such that
\[
\Ex \sup_{f_1\in {\cal F}_1}\sum_{i\leq N}|f_1(X_i)|\leq \sqrt{N}S_N({\cal F}),
\]
\[
\gamma_2({\cal F}_2,d_2)\leq LS_N({\cal F}) \quad \mbox{ and }\quad  \gamma_1({\cal F}_2,d_\infty)\leq L\sqrt{N}S_N({\cal F}).
\]
\end{conj}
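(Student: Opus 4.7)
The plan is to adapt the architecture of the proof of Theorem \ref{th:BC} to the empirical process setting, since Conjecture \ref{conj:emp} is, as noted in the text, a natural analogue of the Bernoulli statement once one recognizes that Bernstein's inequality \eqref{eq:Ber} forces the presence of \emph{two} chaining regimes (an $\ell^2$ regime producing $\gamma_2(\cdot,d_2)$ and an $\ell^\infty$ regime producing $\gamma_1(\cdot,d_\infty)$) instead of a single Gaussian regime. First I would symmetrize: for $(\ve_i)$ an independent Bernoulli sequence, classical arguments give $S_N(\mathcal F)\simeq N^{-1/2}\Ex\sup_{f\in\mathcal F}|\sum_{i\le N}\ve_i f(X_i)|$, so conditionally on $(X_i)$ we are looking at a Bernoulli process. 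Theorem \ref{th:BC} applied conditionally already yields a random decomposition $\mathcal F\subset \mathcal F_1(\omega)+\mathcal F_2(\omega)$ with the desired $\ell^1$ and $g$-bounds (in the empirical metric). The real content of the conjecture is that one can choose this decomposition \emph{deterministically} in $\mathcal F$ itself, with the $T_2$-part further split so as to control $\gamma_1(\cdot,d_\infty)$ separately from $\gamma_2(\cdot,d_2)$.

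Following the pattern of Sections \ref{sec:est}--\ref{sec:constr}, I would start by establishing the three main ingredients in the empirical setting: (a) a concentration inequality for $\sup_{f\in\mathcal F}\sum_i(f(X_i)-\Ex f(X_i))$ around its mean with Bernstein-type fluctuations depending on $\sup\|f\|_2$ and $\sup\|f\|_\infty$ (this is well known, e.g.\ Talagrand--Bousquet); (b) a Sudakov-type minoration asserting that if $f_1,\dots,f_m\in\mathcal F$ satisfy $\|f_k-f_l\|_2\ge a$ and $\|f_k-f_l\|_\infty\le b$, then $\sqrt{N}\,S_N(\mathcal F)\gtrsim\min(a\sqrt{N\log m},\,b\log m)$, which is the exact inverse of \eqref{eq:Ber}; (c) an analogue of the key Proposition \ref{prop:prop1}: for a subset $J\subset\{1,\dots,N\}$ of \emph{sample indices}, if the diameter of $\mathcal F$ for $d_{2,J}(f,g)=(N^{-1}\sum_{i\in J}(f(X_i)-g(X_i))^2)^{1/2}$ is small, then one can either cover $\mathcal F$ by few balls for the full $d_{2,I}$ or else significantly decrease the size of the restricted process $\sum_{i\in J}\ve_i f(X_i)$. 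This is the true empirical analogue of removing Bernoulli variables.

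Next I would define functionals indexed by parameters that track (i) a chopping level $j$ for the functions, splitting each $f$ into a large-value and small-value part, (ii) a sample subset $J$, (iii) a reference point $u\in\mathcal F$ and (iv) an exponent $k$ playing the role of the neighborhood scale, and prove a decomposition lemma analogous to Corollary \ref{cor:maindec}: every piece is either small in $d_2$, or small in $d_\infty$, or of significantly smaller functional value after ignoring some chopping intervals or some sample indices. Iterating as in Proposition \ref{prop:partconstr} should produce an admissible sequence of partitions with counters $p_n$, from which a triple decomposition $\mathcal F\subset \mathcal F_1+\mathcal F_2^{(2)}+\mathcal F_2^{(\infty)}$ could be extracted by a version of Theorem \ref{th:part} that reads off \emph{three} separate contributions: an $\ell^1$-type sum from large-value chopped pieces, a $\gamma_2$-sum from $d_2$-controlled increments, and a $\gamma_1$-sum from $d_\infty$-controlled increments.

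The main obstacle, which is why the conjecture remains open, is that the intrinsic distances $\|f-g\|_2,\|f-g\|_\infty$ governing Bernstein's inequality are \emph{not} the empirical quantities actually summed in $S_N(\mathcal F)$; this mismatch did not arise in Theorem \ref{th:BC}, where the Bernoulli distance and the $\ell^2$ distance coincide. Chopping maps in the empirical context must be performed at the level of the functions themselves (sample-independent) while the size functional is sample-dependent, so every application of a minoration--concentration step risks losing a factor. A second, finer difficulty is that the decomposition must now simultaneously balance three contributions rather than two, so the ``greedy'' choice in the analogue of Corollary \ref{cor:greedy} has to optimize over both $\sqrt{\delta}\,a\sqrt{\log m}$ and $b\log m$ branches of Bernstein's bound, and the counter scheme of Proposition \ref{prop:partconstr} would need a second, independent counter to decide in which of the two regimes one is currently decomposing. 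Producing an admissible sequence of partitions in which these two counter systems coexist without destroying the chain of estimates in the proof of Proposition \ref{prop:partconstr} is where I expect the main technical innovation will have to occur.
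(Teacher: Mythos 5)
The statement you are asked to prove is Conjecture \ref{conj:emp}: it is stated in the paper as an \emph{open conjecture} (due to Talagrand), and the paper contains no proof of it --- only the heuristic that it would extend the philosophy of Theorem \ref{th:BC} from Bernoulli to empirical processes. Your proposal is therefore not comparable to a proof in the paper; it is a research program, and to your credit you say so yourself (``which is why the conjecture remains open''). As a proof attempt it has genuine gaps, and it is worth naming the two most serious ones concretely. First, your ingredient (b), the Sudakov-type minoration matching Bernstein's inequality \eqref{eq:Ber} --- that $m$ functions with $\|f_k-f_l\|_2\ge a$ and $\|f_k-f_l\|_\infty\le b$ force $\sqrt{N}S_N(\mathcal F)\gtrsim\min(a\sqrt{N\log m},\,b\log m)$ --- is not a known theorem in the generality you need; for Bernoulli processes the corresponding statement is Theorem \ref{th:SudBern}, a deep result of Talagrand, and its empirical analogue is essentially the heart of why the conjecture is open rather than a tool you can assume. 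Second, even granting a minoration, the lower-bound machinery of the paper (growth conditions, Corollary \ref{cor:greedy}, Theorem \ref{th:part}) is built to certify a \emph{single} $\gamma_2$-type functional; there is no known analogue of the majorizing measure theorem that extracts from the size of a process one admissible sequence of partitions simultaneously witnessing $\gamma_2(\cdot,d_2)\le LS_N$ \emph{and} $\gamma_1(\cdot,d_\infty)\le L\sqrt{N}S_N$. Your suggestion of ``two coexisting counter systems'' names the difficulty but does not resolve it, and this is precisely where the argument would break.

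A further unaddressed point: symmetrization plus a conditional application of Theorem \ref{th:BC} yields a decomposition $\mathcal F\subset\mathcal F_1(\omega)+\mathcal F_2(\omega)$ depending on the sample, with the $\gamma_2$ bound holding for the \emph{empirical} $L^2$ metric $(N^{-1}\sum_{i\le N}(f-g)^2(X_i))^{1/2}$ rather than the intrinsic $d_2$, and with no control whatsoever on $\gamma_1(\cdot,d_\infty)$. You flag the derandomization issue, but the metric mismatch and the missing $d_\infty$ control mean that the conditional Bernoulli decomposition is not even a correct starting point for the statement to be proved: the object it produces is qualitatively different from the one the conjecture demands. In short, the proposal is a reasonable description of how one might hope to attack the problem in the spirit of this paper, but none of its three pillars ((b), (c), and the three-way version of Theorem \ref{th:part}) is established, and the paper offers no proof against which to measure it.
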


Related conjectures with a much more detailed discussion may be found in \cite{Ta_STOC} and \cite[Chapter 12]{Tab1}.

\noindent
Institute of Mathematics\\
University of Warsaw\\
Banacha 2\\
02-097 Warszawa\\
Poland\\
\texttt{wbednorz@mimuw.edu.pl, rlatala@mimuw.edu.pl}

\end{document}